\newcommand{\abs}[1]{\left\lvert #1\right\rvert}
\renewcommand{\le}{\leqslant}
\renewcommand{\ge}{\geqslant}
\DeclareMathOperator{\RE}{Re}
\renewcommand{\Re}{\RE}
\DeclareMathOperator{\IM}{Im}
\renewcommand{\Im}{\IM}
\DeclareMathOperator{\Ai}{Ai}
\DeclareMathOperator{\Bi}{Bi}
\DeclareMathOperator{\Arg}{Arg}
\renewcommand{\mid}{\;:\;}
\theoremstyle{definition}
\newtheorem{theorem}{Theorem}
\newtheorem{lemma}[theorem]{Lemma}
\newtheorem{proposition}[theorem]{Proposition}
\newtheorem{definition}[theorem]{Definition}
\theoremstyle{remark}
\newtheorem*{remark}{Remark}
\newcounter{Myenum}
\newenvironment{Mylist}
{\begin{list}{(\Alph{Myenum})}{\usecounter{Myenum}
\setlength{\leftmargin}{0pt}
\setlength{\labelwidth}{0pt}
\setlength{\itemindent}{.5em}
\setlength{\listparindent}{\parindent}
\setlength{\parsep}{0pt}}}
{\end{list}
}
\begin{document}
\begin{center}
\LARGE \bfseries
Global  regularity
\\
 of second order twisted   differential operators
\end{center}

\smallskip
\begin{center}
Ernesto Buzano\,\footnote{\;Dipartimento di Matematica, Universit\`a di Torino (Retired.)} and
Alessandro Oliaro\,\footnote{\;Dipartimento di Matematica, Universit\`a di Torino.}
\end{center}

\begin{abstract}
In this paper  we characterize global regularity in the sense of Shubin of  twisted partial differential operators of second order in dimension $2$. These operators form  a class   containing  the twisted Laplacian, and in bi-unique correspondence  with second order ordinary differential operators with polynomial coefficients and   symbol of degree $2$. This correspondence is established by  a transformation of Wigner type. In this way the global regularity of twisted partial differential operators turns out to be equivalent to global regularity and injectivity of the corresponding ordinary differential operators, which can be completely characterized in terms of the asymptotic behavior of the Weyl symbol.   
In conclusion we observe that we have obtained a new class of globally regular  partial differential operators which is  disjoint from the class of hypo-elliptic operators in the sense of Shubin.
 \\[0.2cm]
Keywords: Global regularity, twisted operators, non hypo-elliptic operators. \\[0.1cm]
Mathematics subject classification: 35B40, 34E05, 42A38.
\end{abstract}

\section{Introduction}

In this paper we deal with the problem of global regularity for non
hypo-elliptic partial differential operators with polynomial
coefficients. An operator $A:\mathcal{S}^\prime(\mathbb R^n)\rightarrow
\mathcal{S}^\prime(\mathbb R^n)$ is globally regular if
\begin{equation}\label{eqn:1003}
u\in\mathcal{S}(\mathbb R^n) \quad\text{\upshape whenever}\quad Au\in\mathcal{S}(\mathbb R^n).
\end{equation}
It is well known that
hypo-elliptic partial differential operators in the sense of
Definition 25.2 of \cite{Shubin} are globally regular. On the other
hand, the
problem of finding necessary and sufficient conditions for
the
global regularity of a differential operator with polynomial coefficients is still  open.
In the case of
ordinary differential equations, in \cite{Nicola-Rodino:2} necessary and
sufficient
conditions for global regularity are found under additional hypotheses. For partial
differential equations the problem is much more
complicated.

In this paper we study twisted differential operators of second order in $\mathbb{R}^2$, that is, partial differential operators of the kind
\begin{equation}\label{eqn:1000}
A=\sum_{j+k\le 2}(-1)^{j+k}a_{kj} (\alpha D_y-\beta M_x)^j (\gamma D_x-\delta M_y)^k
\end{equation}
with complex coefficients $a_{kj}$, where $D_x=-\mathrm i\partial_x$, $D_y=-\mathrm i\partial_y$, $M_x$ and $M_y$ are the multiplication operators by the corresponding variables $x$ and $y$, and $\alpha,\beta,\gamma,\delta\in\mathbb{R}$ are such that
\begin{equation}\label{eqn:1001}
\alpha\delta-\beta\gamma=1\quad\text{\upshape and}\quad \beta\delta\ne 0.
\end{equation}
An important example is the twisted Laplacian
\begin{equation}\label{eqn:1002}
L=\left( D_x+\frac{1}{2}M_y\right)^2+\left( D_y-\frac{1}{2}M_x\right)^2,
\end{equation}
that can be viewed as a Schr\"odinger operator with magnetic potential. It is well-known that $L$ has a discrete spectrum, consisting of the set of positive odd numbers, and that each of the corresponding eigenspaces is infinite-dimensional. The literature on operators of the kind of \eqref{eqn:1002} is wide. For general results on the twisted Laplacian and its relations with the sublaplacian on the Heisenberg group and the Harmonic Oscillator see for instance \cite{Thangavelu}. In \cite{Koch-Ricci} the eigenspaces of the twisted Laplacian are described and the spectral projections $P_\lambda$ are studied, finding the optimal exponent $\rho(p)$ such that $\Vert P_\lambda u\Vert_{L^p}\le \lambda^{\rho(p)}\Vert u\Vert_{L^2}$, for $p\in[2,\infty]$. Dispersive estimates of the wave flow for the twisted Laplacian (and the Harmonic Oscillator) are investigated in \cite{Dancona-Pierfelice-Ricci}. Moreover, problems related to regularity of the solution of the twisted Laplacian are studied in different frames. In particular, in \cite{Li-Parmeggiani} analytic and Gevrey regularity is analyzed, whereas in \cite{Wong} the global regularity in the sense of \eqref{eqn:1003} is proved, by explicit computation of the heat kernel and Green function. Here we follow a new approach, related to transformations of Wigner type, to characterize global regularity of second order twisted
operators. The approach consists in applying a Wigner-like
transform to a general differential equation. This idea is
already present in some works related to engineering applications,
see \cite{Galleani-Cohen-1}, \cite{Galleani-Cohen-2}. In these
papers some equations are analyzed, looking for the Wigner transform
of the solution. Instead of finding  first a solution $u$, and then
computing its Wigner transform $\mathcal Tu$, the equation itself is
Wigner-transformed  obtaining an equation in $\mathcal Tu$. In this way
it is possible to find,  in some cases, the exact expression of
$\mathcal Tu$.

In this paper, by using the approach of \cite{Galleani-Cohen-1}, \cite{Galleani-Cohen-2} (see also \cite{Cohen},) we establish a link between twisted operators \eqref{eqn:1000} and general second order ordinary differential operators with polynomial coefficients of the form
\begin{equation}\label{eqn:1004}
B=\sum_{j+k\le 2} a_{kj}M^j D^k.
\end{equation}
We call $B$ the \emph{source} of $A$. We prove in Theorem \ref{thm:21} that \eqref{eqn:1000} is globally regular in the sense of \eqref{eqn:1003} if and only if \eqref{eqn:1004} is globally regular and one-to-one as an operator from $\mathcal{S}'(\mathbb{R})$ into $\mathcal{S}'(\mathbb{R})$. In Proposition \ref{pro:47} we give a complete characterization of all operators  \eqref{eqn:1004} that are  globally regular, in terms of the behavior of the complex roots of its Weyl symbol. In particular we avoid the additional hypotheses required in \cite{Nicola-Rodino:2}. Among the operators \eqref{eqn:1004} that are globally regular we then characterize those that are also one-to-one (see Theorem \ref{thm:32}.) This is done through a careful analysis of the asymptotic behavior of the solutions of $Bu=0$. As a consequence we characterize  all the operators \eqref{eqn:1000} that are globally regular. Then we recover as a particular case the global regularity of the twisted Laplacian, (already proved in \cite{Wong},) since the source of the twisted Laplacian is the Harmonic Oscillator, that is globally regular and one-to-one.

As already observed, hypo-elliptic differential operators in the sense of Definition 25.2 of \cite{Shubin} are globally regular. Then, starting from an hypo-elliptic and one-to-one source, the corresponding twisted operator is globally regular. It is worthwhile to stress that twisted operators \eqref{eqn:1000} are never hypo-elliptic, as shown in Proposition \ref{pro:18}, so the class of twisted globally regular operators that we find is completely disjoint from the class of hypo-elliptic operators. Moreover, we observe that there are globally regular twisted operators that have an hypo-elliptic source, as the twisted Laplacian, but not all twisted globally regular operators have an hypo-elliptic source. For example the operator with constant coefficients
\begin{equation*}
B_1=a_{20}D_x^2+a_{10}D_x+a_{00}
\end{equation*}
is globally regular and one-to-one if and only if the polynomial
\begin{equation}\label{eqn:1005}
a_{20}\xi^2+a_{10}\xi+a_{00}
\end{equation}
never vanishes. This is consequence of Theorem \ref{thm:32} below, but it can be easily proved directly since $B_1$, on the Fourier transform side, is the multiplication by \eqref{eqn:1005}. The corresponding twisted operator is
\begin{equation*}
A_1=a_{20}\gamma^2(D_x-\mu M_y)^2-a_{10}\gamma(D_x-\mu M_y)+a_{00},
\end{equation*}
with $\gamma,\mu\in\mathbb{R}$, $\mu\ne 0$. If \eqref{eqn:1005} never vanishes, $A_1$ is then globally regular but its source $B_1$ is never hypo-elliptic. We can find examples of this kind also in the case of sources with variable coefficients. Consider for example the twisted operator
\begin{equation*}
A_2=(\gamma D_x-\delta M_y)^2-\mathrm i(\gamma D_x-\delta M_y)-(\alpha D_y-\beta M_x)^2
\end{equation*}
with source
\begin{equation*}
B_2=D_x^2+\mathrm i D_x-M_x^2.
\end{equation*}
In view of the results of the present paper, for $\alpha,\beta,\gamma,\delta\in\mathbb{R}$ satisfying \eqref{eqn:1001}, both $A_2$ and $B_2$ are globally regular, and $B_2$ is one-to-one, but both $B_2$ and $A_2$ are not hypo-elliptic.

In this paper we only treat the case of second order operators in dimension $2$. Our results can  be probably generalized to dimension greater than $2$,  but this depends on how to  extend  the Definition \ref{def:2} to higher dimensions. On the other hand, the extension of Theorem \ref{thm:16} to operators of order greater than $2$  looks very difficult because already a complete characterization of globally regular ordinary differential operators of  order  greater than $2$ and with polynomial coefficients is an open problem. 

Lastly, since the technique used in this paper to link a source to the corresponding twisted operator recaptures well-known connections between the Harmonic Oscillator and the twisted Laplacian, we think that it can be fruitfully used to prove that results holding for the twisted Laplacian  (see for example \cite{Koch-Ricci}, or \cite{Li-Parmeggiani}) hold in fact for larger classes of operators.

The paper is organized as follows. After  some  basic results in Section  \ref{sec:2}, we study properties of twisted operators and the relations with their sources in Section \ref{sec:3}. The main results on global regularity are proved in Section \ref{sec:4}. As already observed, we need a careful analysis of the asymptotic behavior of the solutions of second order ordinary differential 
equations. As a consequence we then need precise asymptotic expansions of special functions, as well as of their linear combinations, in suitable sectors of the complex plane. Since we have not found in the literature  all the results  in the form we need, for the sake of completeness we prove them in Sections \ref{sec:5} and \ref{sec:6}.

\smallskip We end 
this introduction with some notations and definitions.

Given a subset $S$ of the complex numbers $\mathbb C$, we set $S^\ast=S\setminus\{0\}$.
If $S\subset\mathbb R$, we set $S_+=\{x\in S: x\ge 0\}$, and $S_-=\{x\in S: x\le 0\}$.
Thus in particular $\mathbb Z^\ast_+=\{1,2,\ldots \}$.

To avoid ambiguity due to polar representation of complex numbers we define the  \emph{principal branch of the argument of $z\in\mathbb C^\ast$} as
\begin{equation}\label{eqn:448}
\Arg z=\begin{cases}
2\arctan\frac{\Im z}{\Re z+\abs z},&\text{\upshape if $\Im z\ne 0$ or $\Im z=0$ and $\Re z>0$},\\
\pi,&\text{\upshape if $\Im z=0$ and $\Re z<0$}.\end{cases}.
\end{equation}

Observe that \eqref{eqn:448} implies
\begin{equation*}
\Arg(-z)=\Arg z+\sigma(z)\pi,
\end{equation*}
where
\begin{equation*}
\sigma(z)=\begin{cases}1,&\text{\upshape if $\Arg z\le 0$},\\ -1,&\text{\upshape if $0<\Arg z$}.\end{cases}
\end{equation*}

Given a complex number $\lambda$ we \emph{define}
\begin{equation*}
z^\lambda=\mathrm e^{\lambda\log\abs{z}+\mathrm i \lambda\Arg z},\qquad \text{\upshape for $z\in\mathbb C^\ast$}.
\end{equation*}
With this definition we have
\begin{equation*}
\Arg (z^\lambda)=\Im \lambda\log\abs{z}+\Re \lambda\Arg z\iff -\pi<\Im \lambda\log\abs{z}+(\Re\lambda) \Arg z\le \pi.
\end{equation*}
In particular, given a \emph{real} number $p$ such that  $\abs{p}< 1$,
we have
$\Arg(z^p)=p\Arg z$,
and therefore
$(z^p)^\lambda=z^{\lambda p}$,  for all $\lambda\in \mathbb C$.

\section{Globally regular operators}\label{sec:2}

\begin{definition}
A linear operator $A$ on $\mathcal S'(\mathbb R^n)$ is
\emph{globally  regular}  if
\begin{equation*}
Au\in  \mathcal S(\mathbb R^{n}) \implies u\in \mathcal S(\mathbb R^n),\qquad \text{\upshape for all $u\in\mathcal S'(\mathbb R^n)$}.
\end{equation*}
\end{definition}

We employ standard multi-index notation. In particular, a linear differential operator  $A$ has \emph{symbol}
\begin{equation}\label{eqn:51}
a(x,\xi)=\sum_{\abs{\alpha}\le m} a_\alpha(x)\xi^\alpha,
\end{equation}
if
\begin{equation}\label{eqn:52}
A=\sum_{\abs{\alpha}\le m} a_\alpha(x) D^\alpha,
\end{equation}
with
\begin{equation*}
D_j=-\mathrm i\partial_j,\qquad \text{\upshape for $1\le j\le n$}
\end{equation*}
and $\mathrm i^2=-1$.

\begin{definition}[See \makebox{\cite[Definition 1.3.2]{Nicola-Rodino:1}}]\label{def:1}
A linear differential operator on $\mathcal S'(\mathbb R^n)$, \emph{with polynomial symbol}:
\begin{equation*}
a(x,\xi)=\sum_{\abs{\alpha+\beta}\le m} a_{\alpha,\beta} x^\alpha \xi^\beta
\end{equation*}
is \emph{globally hypo-elliptic}  if $a(x,\xi)$ does not vanish outside a compact set and
\begin{equation}\label{eqn:47}
\lim_{\abs{x}+\abs{\xi}\to \infty} \frac
{\partial_x^\alpha\partial_\xi^\beta a(x,\xi)}
{a(x,\xi)}=0,\qquad \text{\upshape for $\abs{\alpha}+\abs{\beta}=1$}.
\end{equation}
\end{definition}

 \begin{theorem}\label{thm:10}
Assumption \eqref{eqn:47} implies that
\begin{equation}\label{eqn:48}
\lim_{\abs{x}+\abs{\xi}\to\infty}\frac{\partial^\alpha_x\partial^\beta_\xi a(x,\xi)}{a(x,\xi)}=0,\qquad \text{\upshape for $\abs{\alpha}+\abs{\beta}\ge 1$},
\end{equation}
and that there exists $0<m_0\le m$ such that
\begin{equation}\label{eqn:49}
\inf_{(x,\xi)\in\mathbb R^n\times\mathbb R^n}\frac{1+\abs{a(x,\xi)}}{\left(1+\abs{x}+\abs{\xi}\right)^{m_0}}>0.
\end{equation}
\end{theorem}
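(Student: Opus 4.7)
The plan is to prove the two conclusions in sequence: first the polynomial lower bound \eqref{eqn:49}, and then the higher-order estimates \eqref{eqn:48} by induction, using \eqref{eqn:49} as the main new tool.

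For \eqref{eqn:49}, I would combine the polynomial structure of $a$ with the nonvanishing hypothesis built into Definition \ref{def:1} and the differential condition \eqref{eqn:47}. Writing $a=\sum_{j=0}^m a_j$ in homogeneous components, on a ray $t\mapsto tz_0$ with $|z_0|=1$ the restriction $a(tz_0)$ is a polynomial in $t$ of degree $k(z_0):=\max\{j:a_j(z_0)\neq 0\}$, so $|a(tz_0)|\sim |a_{k(z_0)}(z_0)|\,t^{k(z_0)}$ as $t\to\infty$. The crux is to show that $k(z_0)\ge m_0$ for some $m_0\ge 1$ uniformly in $z_0$. Directions along which $k(z_0)$ would be zero or small correspond to rays on which $a$ is bounded or grows slowly, and such directions are ruled out by applying \eqref{eqn:47} to off-axis test-curves of the form $z_0+w(t)$ with $w(t)\to 0$ chosen to make $|\nabla a/a|$ stay bounded away from $0$, contradicting the hypothesis. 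A compactness and continuity argument on the unit sphere then yields the uniform exponent $m_0$, from which \eqref{eqn:49} follows after a routine extension to the compact region.

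For \eqref{eqn:48} I would induct on $k=|\alpha|+|\beta|$. The base case $k=1$ is exactly the hypothesis. Given the claim for all orders up to $k$, for a derivative of order $k+1$ written $\partial_j\partial^{(\alpha,\beta)}$ with $|\alpha|+|\beta|=k$, the identity
\begin{equation*}
\frac{\partial_j\partial^{(\alpha,\beta)}a}{a}=\partial_j\!\left(\frac{\partial^{(\alpha,\beta)}a}{a}\right)+\frac{\partial^{(\alpha,\beta)}a}{a}\cdot\frac{\partial_j a}{a}
\end{equation*}
splits the task. The second term tends to $0$ at infinity as a product of a vanishing factor from the inductive hypothesis and a vanishing factor from \eqref{eqn:47}. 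For the first, $R:=\partial^{(\alpha,\beta)}a/a$ is a rational function with $R\to 0$; expanding $\partial_j R=(a\,\partial_j\partial^{(\alpha,\beta)}a-\partial^{(\alpha,\beta)}a\cdot\partial_j a)/a^2$ and combining the polynomial upper bounds on the numerator with the lower bound \eqref{eqn:49} on $a^2$ shows $\partial_j R\to 0$ as well.

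The main obstacle is \eqref{eqn:49}, specifically ruling out the slow-growth directions at infinity. A priori a polynomial can remain bounded along some direction (for instance $a(x,\xi)=1+x^2\xi^2$ is nonvanishing everywhere and equal to $1$ on both axes), but such degeneracies turn out to be incompatible with \eqref{eqn:47}: this polynomial fails the hypothesis along the curve $(n,1/\sqrt n)$, where $\partial_x a/a$ stays bounded away from zero. Turning this observation into a uniform argument that forbids bounded-growth directions for \emph{any} polynomial satisfying \eqref{eqn:47} is the technical heart of the proof.
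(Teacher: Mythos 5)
The paper does not prove this theorem from scratch: its proof is a one-line citation of Propositions 2.4.1 and 2.4.4 of \cite{Nicola-Rodino:1}. Your attempt at a self-contained argument is therefore a genuinely different route, but as it stands it has two real gaps. The first concerns \eqref{eqn:49}. Your plan is to show that along every ray $t\mapsto tz_0$ the polynomial grows like $t^{k(z_0)}$ with $k(z_0)\ge m_0\ge 1$ uniformly over the unit sphere, and to deduce \eqref{eqn:49} from this by compactness. That implication is false: the polynomial $a(x,\xi)=1+(\xi-x^2)^2$ grows at least like $t^2$ along every ray, uniformly in the direction, yet $a\equiv 1$ on the parabola $\xi=x^2$, so the infimum in \eqref{eqn:49} is zero for every $m_0>0$. (This $a$ violates \eqref{eqn:47}, so it does not contradict the theorem, but it does refute the step you rely on.) The lower bound \eqref{eqn:49} is a statement about arbitrary sequences tending to infinity, and the degeneracies that must be excluded live on curved varieties which a ray-by-ray analysis on the sphere cannot see. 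Ruling these out using \eqref{eqn:47} is precisely the technical content of the cited propositions, and you acknowledge but do not supply it.

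The second gap is in the inductive step for \eqref{eqn:48}. The splitting identity is correct and the product term is handled properly, but for the term $\partial_jR$ with $R=\partial^{(\alpha,\beta)}a/a$ your justification --- polynomial numerator of $\partial_jR=(a\,\partial_j\partial^{(\alpha,\beta)}a-\partial^{(\alpha,\beta)}a\,\partial_ja)/a^2$ against the lower bound \eqref{eqn:49} on $a^2$ --- is both circular (the numerator contains exactly the derivative $\partial_j\partial^{(\alpha,\beta)}a$ you are trying to control) and quantitatively insufficient, since that numerator can have degree up to $2m-|\alpha|-|\beta|-1$ while \eqref{eqn:49} only gives $|a|^2\gtrsim(1+|x|+|\xi|)^{2m_0}$ with $m_0$ possibly much smaller than $m$. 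Nor is there a general principle to fall back on: in two or more variables a rational function tending to $0$ at infinity need not have gradient tending to $0$; for instance $R(x,\xi)=x/(1+x^2+x^4\xi^2)\to 0$ at infinity while $\partial_xR\to -\tfrac12$ along the curve $\xi=x^{-2}$. Both halves of the theorem therefore still rest on unproved claims; the honest options are either to carry out the Taylor-expansion argument at near-minima of $|a|$ that underlies the cited results, or to do as the paper does and simply quote Propositions 2.4.1 and 2.4.4 of \cite{Nicola-Rodino:1}.
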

\begin{proof}
Statement \eqref{eqn:48} follows from Propositions 2.4.1 and 2.4.4 of \cite{Nicola-Rodino:1}.
\end{proof}

\begin{theorem}\label{thm:4}
A globally hypo-elliptic linear differential operator   with polynomial symbol
is globally regular.
\end{theorem}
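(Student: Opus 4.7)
The plan is to show that a globally hypo-elliptic operator $A$ with polynomial symbol admits a pseudodifferential parametrix in the Shubin calculus, from which global regularity follows immediately. The three ingredients are: (i) the symbol belongs to a hypoelliptic class, (ii) a parametrix exists, and (iii) parametrices automatically give global regularity.

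First, I would check that the estimates provided by Theorem \ref{thm:10} place the polynomial symbol $a(x,\xi)$ in the hypoelliptic Shubin class. Precisely, since $a$ is a polynomial of degree at most $m$, one has the trivial upper bounds $\bigabs{\partial_x^\alpha\partial_\xi^\beta a(x,\xi)}\le C_{\alpha\beta}(1+\abs x+\abs\xi)^{m-\abs\alpha-\abs\beta}$, while \eqref{eqn:49} gives the lower bound $\abs{a(x,\xi)}\ge c(1+\abs x+\abs\xi)^{m_0}$ outside a compact set, and \eqref{eqn:48} gives the decay of logarithmic derivatives. Hence $a$ lies in the hypoelliptic class $HG^{m,m_0}$ of Nicola--Rodino, which is the natural framework for applying Shubin's symbolic calculus.

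Next, I would invoke the parametrix construction in this calculus (cf.\ \cite{Nicola-Rodino:1}, Theorems 1.3.6 and 1.4.1, or the analogous statements in \cite{Shubin}): there exists a pseudodifferential operator $B$ with symbol $b\in \Gamma^{-m_0}$ such that
\begin{equation*}
BA=I+R,
\end{equation*}
where $R$ is a regularizing operator, meaning that $R$ maps $\mathcal S'(\mathbb R^n)$ continuously into $\mathcal S(\mathbb R^n)$. The construction proceeds formally by setting $b\sim 1/a$ plus correction terms obtained from the asymptotic expansion of the symbol product, and the hypoellipticity estimates \eqref{eqn:48}--\eqref{eqn:49} are exactly what is needed to make the asymptotic sum converge in the calculus and to show that the remainder is smoothing.

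Finally, given the parametrix, the conclusion is immediate: if $u\in \mathcal S'(\mathbb R^n)$ satisfies $Au\in\mathcal S(\mathbb R^n)$, then
\begin{equation*}
u=B(Au)-Ru,
\end{equation*}
and both terms on the right lie in $\mathcal S(\mathbb R^n)$, since pseudodifferential operators of the Shubin class map $\mathcal S$ continuously into itself and $R$ maps $\mathcal S'$ into $\mathcal S$. I expect the main obstacle to be merely organizational, namely citing the parametrix construction in the exact form needed; all the hard analytic work is already packaged inside the Shubin--Nicola--Rodino calculus, and \eqref{eqn:48}--\eqref{eqn:49} are precisely the hypotheses that make the parametrix available.
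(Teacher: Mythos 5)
Your proposal is correct and follows essentially the same route as the paper: the paper simply verifies via Theorem \ref{thm:10} that the symbol satisfies the hypotheses of Theorem 25.3 of \cite{Shubin}, whose content is exactly the parametrix construction in the Shubin calculus that you spell out. You have merely unpacked the proof of the cited theorem rather than citing it as a black box.
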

\begin{proof}
Thanks to  Theorem \ref{thm:10}  the symbol  satisfies the hypothesis of Theorem 25.3 of  \cite{Shubin}.
\end{proof}

\section{Twisted differential operators}\label{sec:3}

Define the \emph{multiplication operators}
\begin{equation*}
M_1u(x,y)=M_xu(x,y)=xu(x,y),\qquad
M_2u(x,y)=M_yu(x,y)=yu(x,y),
\end{equation*}
where $u\in\mathcal S'(\mathbb R^2)$.

The \emph{twisted Laplacian}
\begin{equation}\label{eqn:392}
\left( D_x+\frac 1 2 M_y\right)^{\!\!2}+\left( D_y-\frac 1 2 M_x\right)^{\!\!2}
\end{equation}
 is  an important example of an operator which is globally regular
but
 not  globally hypo-elliptic (see \cite{Wong}.)

\begin{definition}\label{def:2}
A  \emph{twisted differential operator of order $m$}  is a linear differential operator on $\mathbb R^2$
of the kind
\begin{equation}\label{eqn:302}
A=\sum_{j+k\le m}(-1)^{j+k}a_{kj} (\alpha D_y-\beta M_x)^j (\gamma D_x-\delta M_y)^k,
\end{equation}
 where  $\alpha,\beta,\gamma,\delta$ are real numbers such that
\begin{equation}\label{eqn:301}
\alpha\delta-\beta\gamma=1\quad\text{\upshape and}\quad \beta\delta\ne 0
\end{equation}
and the coefficients $a_{k,j}$ are complex numbers such that $\sum_{j+k= m}\abs{a_{kj}}\ne 0$.
\end{definition}
For example, if we set
\begin{equation*}
m=2,\quad a_{20}=a_{02}=1,\quad  a_{jk}=0,\quad\text{\upshape for $j,k\le 1$},
\end{equation*}
and
\begin{equation*}
\alpha=-1,\quad \beta=-\frac 12,\quad \gamma=1,\quad \delta=-\frac 12,
\end{equation*}
the operator  \eqref{eqn:302} becomes the twisted Laplacian \eqref{eqn:392}.

\bigskip
The class of twisted differential operators is  completely disjoint from the class of  globally hypo-elliptic operators.
\begin{proposition}\label{pro:18}
Twisted differential operators are \emph{never}  globally hypo-elliptic.
\end{proposition}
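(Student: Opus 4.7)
The plan is to compute the standard symbol of $A$ and to exhibit an unbounded $2$-dimensional linear subspace of phase space on which $\sigma(A)$ reduces to a complex constant. Global hypo-ellipticity then fails either because the symbol vanishes on an unbounded set or because the polynomial growth condition \eqref{eqn:49} is violated.

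Set $X=\alpha D_y-\beta M_x$ and $Y=\gamma D_x-\delta M_y$, so that $A=\sum_{j+k\le m}(-1)^{j+k}a_{kj}X^jY^k$. Since $D_y$ and $M_x$ act in different variables, as do $D_x$ and $M_y$, the powers $X^j$ and $Y^k$ are already in standard form, with symbols $U^j$ and $V^k$, where $U=\alpha\eta-\beta x$ and $V=\gamma\xi-\delta y$. Because $\sigma(X^j)$ depends only on $(x,\eta)$ and $\sigma(Y^k)$ only on $(y,\xi)$, the composition formula for standard symbols involves only $\partial_\eta$ and $\partial_y$ derivatives, and a short Leibniz-type computation yields
\begin{equation*}
\sigma(X^jY^k)=\sum_{r=0}^{\min(j,k)}r!\binom{j}{r}\binom{k}{r}(\mathrm i\alpha\delta)^r\,U^{j-r}V^{k-r}.
\end{equation*}
Summing over $j,k$ gives $\sigma(A)=P(U,V)$ for some polynomial $P\in\mathbb C[U,V]$; only the diagonal contributions with $j=k=r$ survive at the origin, so $P(0,0)=\sum_{2j\le m}j!(\mathrm i\alpha\delta)^j a_{jj}$.

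By \eqref{eqn:301}, $\beta\delta\ne 0$, so $\Sigma:=\{(x,y,\xi,\eta)\in\mathbb R^4:U=V=0\}$ is the two-plane parametrized by $(\xi,\eta)\mapsto(\alpha\eta/\beta,\gamma\xi/\delta,\xi,\eta)$, on which $\abs{x}+\abs{\xi}\to\infty$ as $\abs{(\xi,\eta)}\to\infty$. The symbol is identically $P(0,0)$ on $\Sigma$. If $P(0,0)=0$, then $\sigma(A)$ vanishes on the unbounded set $\Sigma$, so it cannot vanish only outside a compact set, and the first hypothesis of Definition \ref{def:1} fails. If $P(0,0)\ne 0$, then along $\Sigma$ the ratio $(1+\abs{\sigma(A)})/(1+\abs{x}+\abs{\xi})^{m_0}$ tends to $0$ for every $m_0>0$, contradicting \eqref{eqn:49}; by the contrapositive of Theorem \ref{thm:10}, condition \eqref{eqn:47} must fail as well. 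In both cases $A$ is not globally hypo-elliptic.

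The main obstacle is simply the symbol identity $\sigma(A)=P(U,V)$: what makes it hold is that $[X,Y]=\mathrm i$ is a scalar and that $\sigma(X),\sigma(Y)$ involve disjoint pairs of phase-space variables, so every lower-order correction produced by the composition formula is a constant rather than a new polynomial in $(x,y,\xi,\eta)$.
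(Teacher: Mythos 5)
Your proof is correct and follows essentially the same route as the paper: both compute the full symbol via the composition formula, obtaining a polynomial in $U=\alpha\eta-\beta x$ and $V=\gamma\xi-\delta y$ alone, and then observe that constancy on the unbounded plane $U=V=0$ is incompatible with global hypo-ellipticity. Your explicit two-case dichotomy (symbol vanishing on an unbounded set versus violation of \eqref{eqn:49}) merely spells out what the paper leaves implicit in its final sentence.
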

\begin{proof}
By Theorem 3.4 of
\cite{Shubin} we have that the symbol of the operator \eqref{eqn:302}
is given by
\begin{align*}
a(x,y;\xi,\eta) &=\sum_{j+k\le m} (-1)^{j+k}a_{kj}
\sum_{n\in\mathbb{Z}_+} \frac{(-\mathrm i)^n}{n!}
\partial^n_\eta (\alpha \eta-\beta x)^j \partial^n_y
(\gamma \xi-\delta y)^k \\
&= \sum_{j+k\le m} (-1)^{j+k}a_{kj} \sum_{n\le \min\{j,k\}}
(\mathrm i\alpha\delta)^n  \binom{j}{n}
\binom{k}{n}n! (\alpha \eta-\beta x)^{j-n}
(\gamma\xi-\delta y)^{k-n}.
\end{align*}
Since $a$ is constant along the plane
\begin{equation*}
\begin{cases}\alpha\eta-\beta x=0,\\
\gamma\xi-\delta y=0,\end{cases}
\end{equation*}
we have that the operator \eqref{eqn:302} cannot be  globally hypo-elliptic.
\end{proof}

\bigskip
Given four real numbers $\alpha,\beta\,\gamma,\delta$ satisfying \eqref{eqn:301}, define the integral transform of a function
$u\in\mathcal S(\mathbb R^2)$:
\begin{equation*}
\mathcal Tu(x,y)=
(2\pi)^{-\frac 12}\int_{\mathbb R}  \mathrm e^{- \mathrm i z y}u(\beta x+\alpha z,\beta x+\beta\gamma\delta^{-1}z)\,\mathrm d z.
\end{equation*}
A simple computation shows that $\mathcal T$ is an isomorphism on $\mathcal S(\mathbb R^2)$ with inverse given by
\begin{equation*}
\mathcal T^{-1}v(x,y)=(2\pi)^{-\frac 12}\int_{\mathbb R}  \mathrm e^{ \mathrm i t \delta(x-y)}v(\alpha\delta\beta^{-1} y-\gamma x,t)\, \mathrm d t.
\end{equation*}

Since $\mathcal T$ and  its inverse extend  to  $\mathcal S'(\mathbb R^2)$, we may define the transform  of an operator $A$ on $\mathcal S'(\mathbb R^2)$  as
\begin{equation*}
\mathcal T[A]=\mathcal T A \mathcal T^{-1}.
\end{equation*}
Of course this transformation is invertible, with inverse given by
\begin{equation*}
\mathcal T^{-1}[B]=\mathcal T^{-1} B \mathcal T.
\end{equation*}

Since $\mathcal T$ is an isomorphism on $\mathcal S(\mathbb R^2)$ and on $\mathcal S'(\mathbb R^2)$, we have that
\begin{equation}\label{eqn:303}
\text{\upshape $A$ is globally regular $\iff$ $\mathcal T[A]$ is globally regular.}
\end{equation}

Compute
\begin{align}
& \label{eqn:221}\begin{aligned}[t]
&D_1\mathcal Tu(x,y)=(2\pi)^{-\frac 12}\int_{\mathbb R} \mathrm e^{-\mathrm iz y} D_x\left(u(\beta x+\alpha z,\beta x+\beta\gamma\delta^{-1}z)\right)\,\mathrm d z
\\
&\qquad = \beta\mathcal TD_1 u(x,y)+\beta\mathcal TD_2 u(x,y),
\end{aligned}
\\ &\label{eqn:222}
\begin{aligned}[t]
&D_2\mathcal Tu(x,y)=(2\pi)^{-\frac 12}\int_{\mathbb R} -z \mathrm e^{-\mathrm iz y} u(\beta x+\alpha z,\beta x+\beta\gamma\delta^{-1}z)\,\mathrm d z
\\
&\qquad=-(2\pi)^{-\frac 12}\int_{\mathbb R}  \mathrm e^{-\mathrm iz y} \delta\left(\beta x+\alpha z\right)
u(\beta x+\alpha z,\beta x+\beta\gamma\delta^{-1}z)\,\mathrm d z
\\
&\qquad\quad+(2\pi)^{-\frac 12}\int_{\mathbb R}  \mathrm e^{-\mathrm iz y} \delta\left(\beta x+\beta\gamma\delta^{-1} z\right)
u(\beta x+\alpha z,\beta x+\beta\gamma\delta^{-1}z)\,\mathrm d z
\\
 &\qquad=-\delta\mathcal TM_1u(x,y)+\delta\mathcal TM_2u(x,y),
\end{aligned}
\\ &\label{eqn:223}
\begin{aligned}[t]
&M_1\mathcal Tu(x,y)=(2\pi)^{-\frac 12}\int_{\mathbb R} \mathrm e^{-\mathrm iz y} x u(\beta x+\alpha z,\beta x+\beta\gamma\delta^{-1}z)\,\mathrm d z
\\
&\qquad=-(2\pi)^{-\frac 12}\int_{\mathbb R}  \mathrm e^{-\mathrm iz y} \gamma(\beta x+\alpha z)
u(\beta x+\alpha z,\beta x+\beta\gamma\delta^{-1}z)\,\mathrm d z
\\
&\qquad\quad+(2\pi)^{-\frac 12}\int_{\mathbb R}  \mathrm e^{-\mathrm iz y} \alpha\delta\beta^{-1}(\beta x+\beta\gamma\delta^{-1}z)
u(\beta x+\alpha z,\beta x+\beta\gamma\delta^{-1}z)\,\mathrm d z
\\
&\qquad=  -\gamma \mathcal TM_1 u(x,y)+\alpha\delta\beta^{-1}\mathcal TM_2 u(x,y),
\end{aligned}
\\ &\label{eqn:224}
\begin{aligned}[t]
&M_2\mathcal Tu(x,y)=(2\pi)^{-\frac 12}\int_{\mathbb R} \left(-D_z\mathrm e^{-\mathrm iz y}\right) u(\beta x+\alpha z,\beta x+\beta\gamma\delta^{-1}z)\,\mathrm d z
\\
&\qquad=(2\pi)^{-\frac 12} \int_{\mathbb R}  \mathrm e^{-\mathrm iz y}
\alpha D_1 u(\beta x+\alpha z,\beta x+\beta\gamma\delta^{-1} z)\,\mathrm d z
\\
&\qquad\quad
+(2\pi)^{-\frac 12} \int_{\mathbb R}  \mathrm e^{-\mathrm iz y}
\beta\gamma\delta^{-1} D_2 u(\beta x+\alpha z,\beta x+\beta\gamma\delta^{-1} z)\,\mathrm d z
\\
&\qquad=  \alpha \mathcal TD_1 u(x,y)+\beta\gamma\delta^{-1} \mathcal TD_2 u(x,y).
\end{aligned}
\end{align}
It follows that
\begin{alignat*}{2}
&\mathcal T[M_x]=
-\alpha D_y+\beta M_x,
\\
&\mathcal T[D_x]
= -\gamma D_x+\delta M_y,
\end{alignat*}
and more generally
the twisted differential operator \eqref{eqn:302}
can be written as
\begin{equation}\label{eqn:310}
A=\mathcal T[\check A]
\end{equation}
where
\begin{equation}\label{eqn:311}
\check A=\sum_{j+k\le m}a_{kj} M_x^j D_x^k.
\end{equation}
Observe that $\check A$ is an operator on $\mathbb R^2$, acting only on the first variable:
\begin{equation*}
\check Au(x,y)=\sum_{j+k\le m}a_{kj} x^j D_x^ku(x,y).
\end{equation*}

 Recall now that  $\mathcal S(\mathbb R^2)$ is the tensor product of $\mathcal S(\mathbb R)$ by $\mathcal S(\mathbb R)$.
This means that
$\mathcal S(\mathbb R^2)$ is the completion $\mathcal S(\mathbb R)\widehat\otimes\mathcal S(\mathbb R)$ of  the space $\mathcal S(\mathbb R)\otimes\mathcal S(\mathbb R)$
of linear combinations of products
\begin{equation*}
(f\otimes g)(x,y)=f(x)g(y).
\end{equation*}
The same is true for temperate distributions:
\begin{equation*}\mathcal S'(\mathbb R^2)=\left(\mathcal S(\mathbb R)\widehat\otimes\mathcal S(\mathbb R)\right)'=\mathcal S'(\mathbb R)\widehat\otimes\mathcal S'(\mathbb R).
\end{equation*}

Given two continuous linear operators $A_1$ and $A_2$ on $\mathcal S'(\mathbb R)$, there exists a
unique continuous linear operator
$A_1\widehat \otimes A_2$ on $\mathcal S'(\mathbb R)\widehat\otimes\mathcal S'(\mathbb R)=\mathcal S'(\mathbb R^2)$
such that
\begin{equation*}
(A_1\widehat\otimes A_2)(u_1\otimes u_2)=A_1 u_1\otimes A_2 u_2,\quad  (u_1,u_2)\in  \mathcal S'(\mathbb R)\times \mathcal S'(\mathbb R).
\end{equation*}
If $A_1$ and $A_2$ are continuous on $\mathcal S(\mathbb R)$,
the tensor product $A_1\widehat \otimes A_2$ is continuous on
$\mathcal S(\mathbb R)\widehat\otimes\mathcal S(\mathbb R)=\mathcal S(\mathbb R^2)$.

Define  the operators on $\mathcal S'(\mathbb R)$:
\begin{equation*}
Du(x)=-\mathrm i u'(x),\qquad Mu(x)=xu(x),\qquad  Iu(x)=u(x).
\end{equation*}
then we have
\begin{equation*}
M_x=M\widehat \otimes I,\qquad D_x=D\widehat\otimes I,
\end{equation*}
and more generally
\begin{equation*}
\sum_{j+k\le m}a_{kj}  M_x^j D_x^k=\left(\sum_{j+k\le m}a_{kj}  M^j D^k\right)\widehat\otimes I.
\end{equation*}
In other words, if we keep into account \eqref{eqn:302}, \eqref{eqn:310} and \eqref{eqn:311}, we  obtain  the following identity:
\begin{equation*}
A=\mathcal T[\tilde A\widehat \otimes I]
\end{equation*}
where $A$ is the operator \eqref{eqn:302} and
\begin{equation}\label{eqn:314}
\tilde A=\sum_{j+k\le m}a_{kj}  M^j D^k.
\end{equation}
\begin{definition}
The ordinary differential operator $\tilde A$ defined in \eqref{eqn:314} is the \emph{source} of the twisted differential operator $A$ given by \eqref{eqn:302}.
\end{definition}

We always consider the kernel of the source $\tilde A$ in the sense of temperate distributions:
\begin{equation*}
\ker \tilde A=\{u\in \mathcal S'(\mathbb R)\mid \tilde Au=0\}.
\end{equation*}
Observe that $\ker \tilde A\subset \mathcal S(\mathbb R)$, if $\tilde A$ is globally regular.

\bigskip
From \eqref{eqn:303}, we obtain the following proposition.
\begin{proposition}\label{pro:23}
A twisted differential operator $A$
is globally regular if and only if
 $\tilde A\widehat \otimes I$
is globally regular.\end{proposition}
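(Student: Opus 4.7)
The plan is to read the proposition as a direct instance of the general principle already packaged in \eqref{eqn:303}. Setting $B=\tilde A\widehat\otimes I$, the identity $A=\mathcal T[\tilde A\widehat\otimes I]$ established from \eqref{eqn:221}--\eqref{eqn:224} (together with \eqref{eqn:310}, \eqref{eqn:311} and \eqref{eqn:314}) says exactly that $A=\mathcal T[B]$. Applying \eqref{eqn:303} with $B$ in place of $A$ yields that $B$ is globally regular if and only if $\mathcal T[B]=A$ is globally regular, which is the claim.

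If one wants to spell out \eqref{eqn:303} instead of citing it, I would argue directly as follows. Assume first that $A$ is globally regular, and suppose $v\in\mathcal S'(\mathbb R^2)$ satisfies $(\tilde A\widehat\otimes I)v\in\mathcal S(\mathbb R^2)$. Set $u=\mathcal T v\in\mathcal S'(\mathbb R^2)$. From $A\mathcal T=\mathcal T(\tilde A\widehat\otimes I)$ we get
\begin{equation*}
Au=\mathcal T\bigl((\tilde A\widehat\otimes I)v\bigr)\in\mathcal S(\mathbb R^2),
\end{equation*}
because $\mathcal T$ preserves $\mathcal S(\mathbb R^2)$. Global regularity of $A$ then forces $u\in\mathcal S(\mathbb R^2)$, and since $\mathcal T^{-1}$ also preserves $\mathcal S(\mathbb R^2)$, we obtain $v=\mathcal T^{-1}u\in\mathcal S(\mathbb R^2)$. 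The converse direction is completely symmetric: given global regularity of $\tilde A\widehat\otimes I$ and $Au\in\mathcal S(\mathbb R^2)$, set $v=\mathcal T^{-1}u$, note $(\tilde A\widehat\otimes I)v=\mathcal T^{-1}(Au)\in\mathcal S(\mathbb R^2)$, conclude $v\in\mathcal S(\mathbb R^2)$, and apply $\mathcal T$.

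There is no real obstacle here: all the content has already been absorbed into the construction of $\mathcal T$, the verification that $\mathcal T$ and $\mathcal T^{-1}$ are isomorphisms of both $\mathcal S(\mathbb R^2)$ and $\mathcal S'(\mathbb R^2)$, and the intertwining computation leading to $A=\mathcal T[\tilde A\widehat\otimes I]$. Proposition \ref{pro:23} is a one-line formal consequence, and I would present it as such, citing \eqref{eqn:303}.
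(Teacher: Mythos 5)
Your proof is correct and is exactly the paper's argument: the paper derives Proposition \ref{pro:23} directly from the identity $A=\mathcal T[\tilde A\widehat\otimes I]$ together with the equivalence \eqref{eqn:303}, which is what you do (your explicit unwinding of \eqref{eqn:303} is a harmless elaboration).
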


\begin{proposition}\label{pro:22}
The source of a globally regular twisted differential operator
is   globally regular and one-to-one.

In particular a globally regular twisted differential operator
is one-to-one.
\end{proposition}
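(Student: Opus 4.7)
The plan is to reduce both assertions to Proposition \ref{pro:23}, which identifies global regularity of $A$ with that of $\tilde A\widehat\otimes I$ on $\mathcal S'(\mathbb R^2)$. The basic tool is the following elementary observation: if $u,v\in \mathcal S'(\mathbb R)$ satisfy $u\otimes v\in \mathcal S(\mathbb R^2)$ and $v\ne 0$, then $u\in \mathcal S(\mathbb R)$. Indeed, choosing $\psi\in\mathcal S(\mathbb R)$ with $c:=\langle v,\psi\rangle\ne 0$, one has
\begin{equation*}
c\,\langle u,\phi\rangle=\int\!\!\int (u\otimes v)(x,y)\,\phi(x)\psi(y)\,\mathrm d x\,\mathrm d y=\int g(x)\phi(x)\,\mathrm d x,\qquad \phi\in\mathcal S(\mathbb R),
\end{equation*}
where $g(x)=c^{-1}\int (u\otimes v)(x,y)\psi(y)\,\mathrm d y$ is Schwartz in $x$, so $u=g\in\mathcal S(\mathbb R)$. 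By symmetry, if $v\in\mathcal S'(\mathbb R)\setminus\mathcal S(\mathbb R)$ and $u\otimes v\in\mathcal S(\mathbb R^2)$, then $u=0$.

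With this tool, global regularity of $\tilde A$ follows by tensoring with a nonzero Schwartz function: given $u\in\mathcal S'(\mathbb R)$ with $\tilde Au\in \mathcal S(\mathbb R)$, pick any $g\in\mathcal S(\mathbb R)\setminus\{0\}$; then $(\tilde A\widehat\otimes I)(u\otimes g)=\tilde Au\otimes g\in\mathcal S(\mathbb R^2)$, global regularity of $\tilde A\widehat\otimes I$ yields $u\otimes g\in\mathcal S(\mathbb R^2)$, and the observation forces $u\in\mathcal S(\mathbb R)$. Injectivity of $\tilde A$ follows symmetrically by tensoring with a non-Schwartz distribution: if $\tilde Au=0$, pick any $v\in\mathcal S'(\mathbb R)\setminus\mathcal S(\mathbb R)$, say $v=\delta$; then $(\tilde A\widehat\otimes I)(u\otimes v)=0\in\mathcal S(\mathbb R^2)$, so $u\otimes v\in\mathcal S(\mathbb R^2)$, and the symmetric observation forces $u=0$.

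For the final clause, suppose $Aw=0$ for some $w\in\mathcal S'(\mathbb R^2)$ and set $w'=\mathcal T^{-1}w$, so $(\tilde A\widehat\otimes I)w'=0$. For each $\psi\in\mathcal S(\mathbb R)$ introduce the partial pairing $w'_\psi\in\mathcal S'(\mathbb R)$ defined by $\langle w'_\psi,\phi\rangle=\langle w',\phi\otimes\psi\rangle$; a short transposition computation gives $\tilde Aw'_\psi=0$ for every $\psi$, so injectivity of $\tilde A$ yields $w'_\psi=0$. Density of $\mathcal S(\mathbb R)\otimes\mathcal S(\mathbb R)$ in $\mathcal S(\mathbb R^2)$ then forces $w'=0$, hence $w=\mathcal T w'=0$. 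The only mildly delicate point in the whole argument is this last passage from injectivity of $\tilde A$ to injectivity of $\tilde A\widehat\otimes I$ via tensor-product density; the rest is routine bookkeeping around Proposition \ref{pro:23}.
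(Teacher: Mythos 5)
Your proof is correct and follows essentially the same route as the paper: reduce to global regularity of $\tilde A\widehat\otimes I$ via Proposition \ref{pro:23}, tensor with a nonzero Schwartz function to get global regularity of $\tilde A$, and tensor with $\delta\notin\mathcal S(\mathbb R)$ to get injectivity. Your only departures are cosmetic: you pair against a fixed $\psi$ with $\langle v,\psi\rangle\ne 0$ instead of the paper's delta-sequence evaluation at $y=0$, and you spell out via partial pairings and density the identification $\ker(\tilde A\widehat\otimes I)=(\ker\tilde A)\widehat\otimes\mathcal S'(\mathbb R)$ that the paper invokes in one line.
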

\begin{proof}
Let $A$ be the twisted operator. We know from Proposition \ref{pro:23} that $\tilde A\widehat \otimes I$ is globally regular.

Consider $u\in \mathcal S'(\mathbb R)$ such that $\tilde Au\in\mathcal
S(\mathbb R)$. Then $(\tilde A\widehat \otimes I) (u\otimes v)=(\tilde A u)\otimes
v\in \mathcal S(\mathbb R^2)$ for all $v\in \mathcal S(\mathbb R)$.
Since $\tilde A\widehat\otimes I$ is globally regular, $u\otimes v$ must
belong to $\mathcal S(\mathbb R^2)$ for all $v\in \mathcal S(\mathbb
R)$. But this is impossible, unless $u$ belongs to $\mathcal
S(\mathbb R)$. In fact , given $v\in \mathcal S(\mathbb R)$ such that $v(0)=1$, let $(\psi_n)$ be a sequence in $\mathcal S(\mathbb R)$ converging to the Dirac distribution $\delta$. Then for all $\phi\in\mathcal S(\mathbb R)$, we have
\begin{equation*}\begin{aligned}
\int_{\mathbb R}(u\otimes v)(x,0)\phi(x)\,\mathrm d x
&=
\lim_{n\to+\infty} \int_{\mathbb R}\left\{\int_{\mathbb R} (u\otimes v)(x,y)\phi(x)\,\mathrm d x\right\}\psi_n(y)\,\mathrm dy
\\&=\lim_{n\to+\infty} \int_{\mathbb R^2} (u\otimes v)(x,y)\,(\phi\otimes \psi_n)(x,y)\,\mathrm d x\mathrm dy
\\&=\langle u\,\vert\, \phi\rangle \lim_{n\to+\infty} \int_{\mathbb R} v(y) \psi_n(y)\,\mathrm dy= \langle u\,\vert\, \phi\rangle.
\end{aligned}\end{equation*}
But this means that $u(x)=(u\otimes v)(x,0)\in\mathcal S(\mathbb R)$.

Now we show that $\tilde A$ is one-to-one. Assume there exists $\phi\in\mathcal S(\mathbb R)\setminus
\{0\}$ such that $\tilde A\phi=0$. Then $\phi\otimes \delta$ belongs to the kernel of
$\tilde A\widehat\otimes I$, but not to $\mathcal S(\mathbb R^2)$, in
contradiction with the global regularity of $\tilde A\widehat\otimes I$.

If $A$ is the globally regular twisted differential operator with source $\tilde A$, we have that $\ker \tilde A=0$. Then $\ker A=\mathcal T\left((\ker \tilde A)\widehat \otimes I\right)=0$, that is $A$ is one-to-one.
\end{proof}

Denote by $(\tilde A)'$ the transpose of the  source \eqref{eqn:314}:
\begin{equation*}
(\tilde A)'=\sum_{j+k\le m}(-1)^k a_{kj} D^k M^j.
\end{equation*}
Observe that $\tilde A$ and $(\tilde A)'$ are dual to each other, that is $(\tilde A)''=\tilde A$. In other words, we have
\begin{equation*}
\begin{cases}\langle (\tilde A)'u\,\vert\,\phi\rangle=\langle u\,\vert\, \tilde A\phi\rangle,\\
\langle \tilde A u\,\vert\,\phi\rangle=\langle u\,\vert\, (\tilde A)'\phi\rangle,\end{cases}\qquad \text{\upshape for all $u\in\mathcal S'(\mathbb R)$, and $\phi\in\mathcal S(\mathbb R)$.}
\end{equation*}

Recall now the following Theorem of \cite{Nacinovich}.
\begin{theorem}\label{thm:22}
An ordinary differential operator with polynomial coefficients, has closed range in   $\mathcal S(\mathbb R)$ and $\mathcal S'(\mathbb R)$.
\end{theorem}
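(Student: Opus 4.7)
The plan is to reduce closed range to the existence of a right parametrix modulo finite rank, which we extract from the asymptotic theory of ordinary differential equations developed in Sections \ref{sec:5} and \ref{sec:6}. First I would note that the transpose $(\tilde A)'=\sum_{j+k\le m}(-1)^k a_{kj} D^k M^j$ is again an operator of the same type, and that for a continuous linear operator between reflexive Fr\'echet spaces the operator and its transpose have closed range simultaneously (strong-dual closed range theorem). Hence it suffices to prove closed range on $\mathcal{S}(\mathbb R)$ for the whole class of such operators; closed range on $\mathcal{S}'(\mathbb R)$ then follows by applying the $\mathcal{S}(\mathbb R)$ statement to $(\tilde A)'$ and dualizing.

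Working on $\mathcal{S}(\mathbb R)$, I would first use elementary ODE theory to see that $\ker\tilde A\cap\mathcal{S}'(\mathbb R)$ is finite-dimensional: on any open interval where the coefficient of the highest derivative does not vanish, a tempered solution is a classical smooth solution of a linear ODE of order at most $m$, and there are only finitely many exceptional points to patch across. The same reasoning applied to $(\tilde A)'$ bounds the dimension of $\ker(\tilde A)'\cap\mathcal{S}(\mathbb R)$.

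The core of the proof is the construction of a continuous right parametrix $E:\mathcal{S}(\mathbb R)\to\mathcal{S}(\mathbb R)$ satisfying $\tilde A E=I-K$, with $K$ of finite rank. Writing the equation $\tilde A u=0$ as a first-order system and clearing the leading coefficient, the point at infinity is (generically) an irregular singular point; the precise asymptotic expansions of Sections \ref{sec:5} and \ref{sec:6} provide a fundamental system of solutions at $+\infty$ and at $-\infty$, of which at most $m$ are rapidly decreasing at each end. For $f\in\mathcal{S}(\mathbb R)$ I would solve $\tilde A u=f$ separately on $\{x>R\}$ and $\{x<-R\}$ by variation of parameters against those rapidly decreasing fundamental solutions, combine these with a local right inverse on the compact interval $[-R,R]$ (available from classical ODE theory once $R$ is chosen so that the zeros of the leading coefficient all lie in $(-R,R)$), and glue the pieces with a partition of unity. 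The gluing introduces a correction whose range is contained in the finite-dimensional span of the derivatives of the cut-offs applied to the finitely many matching coefficients, hence is of finite rank.

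Once $\tilde A E=I-K$ holds with $K$ of finite rank, the operator $I-K$ is a compact perturbation of the identity on $\mathcal{S}(\mathbb R)$, so its image is closed and of finite codimension; since $\tilde A(\mathcal{S}(\mathbb R))\supseteq (I-K)(\mathcal{S}(\mathbb R))$ and the containing quotient is finite-dimensional, $\tilde A(\mathcal{S}(\mathbb R))$ is itself closed. The main obstacle in this plan is to verify that the variation-of-parameters integrals against the rapidly decreasing asymptotic solutions really do produce a continuous operator \emph{into} $\mathcal{S}(\mathbb R)$: this requires quantitative control of the asymptotic expansions of those solutions and of their Wronskian, uniformly in $x$, and it is precisely here that the expansions of Sections \ref{sec:5}--\ref{sec:6} are indispensable.
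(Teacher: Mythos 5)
First, a point of comparison: the paper does not prove this statement at all --- it is quoted from \cite{Nacinovich} --- so there is no internal proof to measure your argument against. Your attempt is a genuinely independent route, and its functional-analytic superstructure is sound: the reduction to $\mathcal S(\mathbb R)$ by duality, the finite-dimensionality of the kernels, and the step from $\tilde A E=I-K$ with $K$ of finite rank to closedness of the range are all standard and correct. The problem is the analytic core.

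The theorem concerns operators of arbitrary order $m$ with arbitrary polynomial coefficients, whereas the asymptotic expansions of Sections \ref{sec:5} and \ref{sec:6} treat only the second-order normal forms (confluent hypergeometric, Hermite--Weber, Airy). They do not supply a fundamental system with controlled behaviour at $\pm\infty$ for a general order-$m$ operator; the introduction itself remarks that already the characterization of globally regular ordinary differential operators of order greater than $2$ is an open problem, precisely because the requisite asymptotic information at the irregular singular point at infinity is not available in general. Even granting such a fundamental system, your parametrix is underdetermined: you propose variation of parameters ``against the rapidly decreasing fundamental solutions,'' but modes of intermediate growth --- tempered without being rapidly decreasing, such as the oscillatory solutions arising when $\Im\Xi_\pm$ stays bounded --- do occur and must also be handled; for each mode one has to choose whether to integrate from a finite base point or from $\pm\infty$ and then prove that the resulting coefficient, an integral of $f$ against a quotient of Wronskian minors by the Wronskian, decays rapidly together with all derivatives. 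That requires two-sided bounds on the Wronskian and its minors which are established nowhere, neither in your sketch nor in the paper. You correctly flag this as ``the main obstacle,'' but flagging it is not closing it: as written the proposal proves the theorem only modulo exactly the estimates it was supposed to deliver. If you want a self-contained proof along these lines you would need the full Levinson-type asymptotic theory for irregular singular points of order-$m$ systems; otherwise the honest course is the paper's, namely to cite \cite{Nacinovich}.
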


Thanks to Theorem \ref{thm:22},   the images $\tilde A\left(\mathcal S(\mathbb R)\right)$
and $\tilde A\left(\mathcal S'(\mathbb R)\right)$ are closed  subspaces of  $\mathcal S(\mathbb R)$ and $\mathcal S'(\mathbb R)$, respectively.
Then by Closed Range Theorem  \cite[Theorem 1.2]{Browder}, it follows that
\begin{equation*}
\tilde A\left(\mathcal S(\mathbb R)\right)=\left\{f\in \mathcal S(\mathbb R)\mid \langle \phi\,\vert\, f\rangle=0,\,\forall \phi\in\ker (\tilde A)'\right\}
\end{equation*}
and
\begin{equation*}
\tilde A\left(\mathcal S'(\mathbb R)\right)=\left\{f\in \mathcal S'(\mathbb R)\mid \langle f\,\vert\, \phi\rangle=0,\,\forall \phi\in\ker (\tilde A)'\cap \mathcal S(\mathbb R)\right\}.
\end{equation*}
Since $\ker (\tilde A)'$ is finite-dimensional, both $\tilde A\left(\mathcal S(\mathbb R)\right)$ and $\tilde A\left(\mathcal S'(\mathbb R)\right)$ have a topological supplementary, we can choose as follows.
Fix a basis $\phi_1,\ldots,\phi_n$ of $\ker (\tilde A)'$, and let $\psi_1,\ldots,\psi_n$ be functions in $\mathcal S(\mathbb R)$ such that $\langle\phi_j\,\vert\,\psi_k\rangle=\delta_{jk}$ for $j,k\in\{1,\ldots,n\}$. Let $\mathcal N((\tilde A)')$ be the subspace of $\mathcal S(\mathbb R)$ generated by $\psi_1,\ldots,\psi_n$. Then
\begin{equation}\label{eqn:385}
\mathcal S(\mathbb R)=\tilde A\left(\mathcal S(\mathbb R)\right)\oplus \mathcal N((\tilde A)').
\end{equation}
Without loss of generality, we can assume that $\ker (\tilde A)'\cap \mathcal S(\mathbb R)$ either equals $0$ or it is generated by $\phi_1,\ldots,\phi_m$, with $m\le n$. Then
\begin{equation}\label{eqn:386}
\mathcal S'(\mathbb R)=\tilde A\left(\mathcal S'(\mathbb R)\right)\oplus \mathcal M((\tilde A)'),
\end{equation}
where $\mathcal M((\tilde A)')$ is either $0$ or the subspace of $\mathcal N((\tilde A)')$ generated by $\psi_1,\ldots,\psi_m$.

Moreover, by Propositions 43.7 and 43.9 of \cite{Treves}, it follows from \eqref{eqn:385} and \eqref{eqn:386} that
\begin{equation}
\label{eqn:226}
\mathcal S(\mathbb R^2)
=
(\tilde A\widehat \otimes I)\mathcal S(\mathbb R^2)\oplus \mathcal N((\tilde A)')\widehat\otimes\mathcal S(\mathbb R)
\end{equation}
and
\begin{equation}
\label{eqn:227}
\mathcal S'(\mathbb R^2)=
(\tilde A\widehat\otimes I)\mathcal S'(\mathbb R^2)\oplus \mathcal M((\tilde A)')\widehat \otimes\mathcal S'(\mathbb R).
\end{equation}

\begin{proposition}\label{pro:48}
Given a twisted differential operator $A$, the images
$A\left(\mathcal S(\mathbb R^2)\right)$ and $A\left(\mathcal S'(\mathbb R^2)\right)$ are closed subspaces of $\mathcal S(\mathbb R^2)$ and $\mathcal S'(\mathbb R^2)$ respectively.
\end{proposition}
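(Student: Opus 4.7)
The plan is to reduce the statement to the closedness of the images of $\tilde A\widehat\otimes I$ on $\mathcal S(\mathbb R^2)$ and $\mathcal S'(\mathbb R^2)$, and then read off the latter directly from the direct-sum decompositions \eqref{eqn:226} and \eqref{eqn:227} that are already in hand.

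First, by \eqref{eqn:310} and \eqref{eqn:314}, the twisted operator satisfies $A=\mathcal T(\tilde A\widehat\otimes I)\mathcal T^{-1}$, and both $\mathcal T$ and $\mathcal T^{-1}$ are continuous bijections of $\mathcal S(\mathbb R^2)$ and of $\mathcal S'(\mathbb R^2)$ onto themselves. Hence
\begin{equation*}
A\bigl(\mathcal S(\mathbb R^2)\bigr)=\mathcal T\bigl((\tilde A\widehat\otimes I)\mathcal S(\mathbb R^2)\bigr),\qquad
A\bigl(\mathcal S'(\mathbb R^2)\bigr)=\mathcal T\bigl((\tilde A\widehat\otimes I)\mathcal S'(\mathbb R^2)\bigr),
\end{equation*}
and the closedness of the left-hand sides is equivalent to the closedness of $(\tilde A\widehat\otimes I)\mathcal S(\mathbb R^2)$ in $\mathcal S(\mathbb R^2)$ and of $(\tilde A\widehat\otimes I)\mathcal S'(\mathbb R^2)$ in $\mathcal S'(\mathbb R^2)$.

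Second, \eqref{eqn:226} and \eqref{eqn:227} exhibit these two images as topological direct summands, respectively with the finite-dimensional complements $\mathcal N((\tilde A)')\widehat\otimes\mathcal S(\mathbb R)$ and $\mathcal M((\tilde A)')\widehat\otimes\mathcal S'(\mathbb R)$. In any locally convex space a subspace admitting a topological supplementary is the kernel of the continuous projection onto that supplementary, and is therefore closed. Applying this observation to the two decompositions yields the required closedness.

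There is essentially no obstacle at this stage: the nontrivial work has already been carried out, namely Theorem \ref{thm:22} together with the closed range theorem to produce the scalar splittings \eqref{eqn:385} and \eqref{eqn:386}, and Propositions 43.7 and 43.9 of \cite{Treves} to lift them to the tensor-product spaces via \eqref{eqn:226} and \eqref{eqn:227}. Proposition \ref{pro:48} is then simply the transport of these facts across the isomorphism $\mathcal T$.
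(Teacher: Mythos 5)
Your proposal is correct and follows essentially the same route as the paper: reduce to $\tilde A\widehat\otimes I$ via the automorphism $\mathcal T$, then read closedness off the topological direct-sum decompositions \eqref{eqn:226} and \eqref{eqn:227}. The only difference is that you make explicit the (standard) observation that a topological direct summand is closed, which the paper leaves implicit.
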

\begin{proof}
Let $\tilde A$ be the source of $A$. Then $A=\mathcal T[\tilde A\widehat \otimes I]$.  Since $\mathcal T$ is an automorphism of $\mathcal S(\mathbb R^2)$ and  of $\mathcal S'(\mathbb R^2)$, the closure of the images follows from
 \eqref{eqn:226} and \eqref{eqn:227}.
\end{proof}

\begin{proposition}\label{pro:40}
Given a twisted differential operator $A$ the following conditions are equivalent.
\begin{enumerate}
\item\label{itm:11}
$\ker \tilde A\subset\mathcal S(\mathbb R)$  and $\ker (\tilde A)'\subset \mathcal S(\mathbb R)$.
\item\label{itm:13}
$\tilde A$ and $(\tilde A)'$ are globally regular.
\end{enumerate}
\end{proposition}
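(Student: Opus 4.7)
The plan is to prove the two implications separately, with direction (B)$\Rightarrow$(A) being essentially immediate from the definition of global regularity, and direction (A)$\Rightarrow$(B) built on the closed-range decompositions \eqref{eqn:385} and \eqref{eqn:386}.

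For (B)$\Rightarrow$(A), I would simply take $u\in\ker\tilde A$. Then $\tilde Au=0\in\mathcal S(\mathbb R)$, so global regularity of $\tilde A$ forces $u\in\mathcal S(\mathbb R)$; hence $\ker\tilde A\subset\mathcal S(\mathbb R)$. The argument for $\ker(\tilde A)'\subset\mathcal S(\mathbb R)$ is identical since $(\tilde A)'$ is also a differential operator with polynomial coefficients and is assumed globally regular.

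For (A)$\Rightarrow$(B), I would start with $u\in\mathcal S'(\mathbb R)$ such that $f:=\tilde Au\in\mathcal S(\mathbb R)$, and show $u\in\mathcal S(\mathbb R)$. The key observation is that, under hypothesis (A), the assumption $\ker(\tilde A)'\subset\mathcal S(\mathbb R)$ gives $\ker(\tilde A)'\cap\mathcal S(\mathbb R)=\ker(\tilde A)'$, so the two characterizations of the image recalled before \eqref{eqn:385} become
\begin{equation*}
\tilde A(\mathcal S(\mathbb R))=\{f\in\mathcal S(\mathbb R)\mid\langle\phi\,\vert\,f\rangle=0,\,\forall\phi\in\ker(\tilde A)'\}=\tilde A(\mathcal S'(\mathbb R))\cap\mathcal S(\mathbb R).
\end{equation*}
Since $f=\tilde Au\in\tilde A(\mathcal S'(\mathbb R))\cap\mathcal S(\mathbb R)$, this identity gives a $v\in\mathcal S(\mathbb R)$ with $\tilde Av=f$. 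Then $u-v\in\ker\tilde A\subset\mathcal S(\mathbb R)$ by hypothesis (A), whence $u=v+(u-v)\in\mathcal S(\mathbb R)$, proving global regularity of $\tilde A$.

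The argument for $(\tilde A)'$ is then the same, applied to the operator $(\tilde A)'$ in place of $\tilde A$: its transpose is $\tilde A$ (since $((\tilde A)')'=\tilde A$), whose kernel lies in $\mathcal S(\mathbb R)$ by (A), so the analogue of \eqref{eqn:385} for $(\tilde A)'$ produces the same dichotomy and the same conclusion. There is no real obstacle here: all the analytic content is concentrated in Theorem \ref{thm:22} and the Closed Range Theorem, which have already been invoked to establish \eqref{eqn:385} and \eqref{eqn:386}; the only step that requires attention is verifying that the hypothesis on $\ker(\tilde A)'$ is precisely what collapses the two annihilator descriptions into one, which is the entire point of the equivalence.
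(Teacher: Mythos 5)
Your proof is correct and follows essentially the same route as the paper: both directions rest on Theorem \ref{thm:22} and the Closed Range Theorem, the only cosmetic difference being that you use the annihilator descriptions of $\tilde A(\mathcal S(\mathbb R))$ and $\tilde A(\mathcal S'(\mathbb R))$ directly, while the paper phrases the same fact through the direct-sum decompositions \eqref{eqn:385} and \eqref{eqn:386} and concludes $h=0$. The key observation — that the hypothesis $\ker(\tilde A)'\subset\mathcal S(\mathbb R)$ (respectively $\ker\tilde A\subset\mathcal S(\mathbb R)$) makes the two image characterizations agree on $\mathcal S(\mathbb R)$ — is exactly the paper's point that $\mathcal N=\mathcal M$.
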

\begin{proof}
It is clear that \eqref{itm:13} $\implies$ \eqref{itm:11}.

Let us prove that \eqref{itm:11} implies that $(\tilde A)'$ is globally regular.
Consider  $u\in \mathcal S'(\mathbb R)$ such that $f=(\tilde A)'u\in \mathcal S(\mathbb R)$.
By the dual to \eqref{eqn:385}, there exist $v\in\mathcal S(\mathbb R)$ and $h\in\mathcal N(\tilde A)$ such that $f=(\tilde A)' v+h$. Since $\ker \tilde A\subset \mathcal S(\mathbb R)$, we have $\mathcal N(\tilde A)=\mathcal M(\tilde A)$. Then the dual to \eqref{eqn:386} implies that $h=0$, that is that $u-v\in\ker (\tilde A)'\subset \mathcal S(\mathbb R)$. Since $v\in\mathcal S(\mathbb R)$ also $u\in\mathcal S(\mathbb R)$.

The proof that  \eqref{itm:11} implies that $\tilde A$ is globally regular is very similar and is left to the reader.
\end{proof}
\begin{theorem}\label{thm:19}
Consider a twisted differential operator $A$.
If $\ker \tilde A=0$ and $\ker (\tilde A)'\subset \mathcal S(\mathbb R)$, the operator $A$ is
 globally regular.
\end{theorem}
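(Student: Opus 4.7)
The plan is to reduce, via Proposition \ref{pro:23}, to proving that $\tilde A\widehat\otimes I$ is globally regular, and then to play the two direct sum decompositions \eqref{eqn:226} and \eqref{eqn:227} against each other. The hypothesis $\ker(\tilde A)'\subset\mathcal S(\mathbb R)$ forces $\ker(\tilde A)'\cap\mathcal S(\mathbb R)=\ker(\tilde A)'$, so the integers $m$ and $n$ in the construction preceding \eqref{eqn:386} coincide, and consequently the complement $\mathcal N((\tilde A)')$ appearing in \eqref{eqn:226} equals the complement $\mathcal M((\tilde A)')$ appearing in \eqref{eqn:227}; I call this common finite-dimensional subspace of $\mathcal S(\mathbb R)$ simply $\mathcal N$.

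Next I would take $u\in\mathcal S'(\mathbb R^2)$ with $f:=(\tilde A\widehat\otimes I)u\in\mathcal S(\mathbb R^2)$ and aim at $u\in\mathcal S(\mathbb R^2)$. Using \eqref{eqn:226} I split $f=(\tilde A\widehat\otimes I)v+h$ with $v\in\mathcal S(\mathbb R^2)$ and $h\in\mathcal N\widehat\otimes\mathcal S(\mathbb R)$. Rearranging gives
\begin{equation*}
h=(\tilde A\widehat\otimes I)(u-v)\in(\tilde A\widehat\otimes I)\mathcal S'(\mathbb R^2).
\end{equation*}
On the other hand, $h$ sits in $\mathcal N\widehat\otimes\mathcal S(\mathbb R)\subset\mathcal M((\tilde A)')\widehat\otimes\mathcal S'(\mathbb R)$, which by \eqref{eqn:227} is a topological supplement of $(\tilde A\widehat\otimes I)\mathcal S'(\mathbb R^2)$. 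Hence $h=0$ and $(\tilde A\widehat\otimes I)(u-v)=0$.

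It then remains to deduce $u=v\in\mathcal S(\mathbb R^2)$ from $(\tilde A\widehat\otimes I)(u-v)=0$; equivalently, to lift the hypothesis $\ker\tilde A=0$ to $\ker(\tilde A\widehat\otimes I)=0$ on $\mathcal S'(\mathbb R^2)$. I would argue by slicing in the second variable: for each $\psi\in\mathcal S(\mathbb R)$ define the partial pairing $w_\psi\in\mathcal S'(\mathbb R)$ by $\langle w_\psi,\phi\rangle=\langle u-v,\phi\otimes\psi\rangle$; then
\begin{equation*}
\langle \tilde A w_\psi,\phi\rangle=\langle w_\psi,(\tilde A)'\phi\rangle=\langle(\tilde A\widehat\otimes I)(u-v),\phi\otimes\psi\rangle=0,
\end{equation*}
so the injectivity of $\tilde A$ on $\mathcal S'(\mathbb R)$ gives $w_\psi=0$ for every $\psi$, and a standard density argument in $\mathcal S(\mathbb R^2)=\mathcal S(\mathbb R)\widehat\otimes\mathcal S(\mathbb R)$ forces $u-v=0$.

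The hard part I anticipate is precisely this last transfer of injectivity from $\tilde A$ to $\tilde A\widehat\otimes I$: although intuitively clear from the tensor structure, a fully rigorous justification leans on the nuclearity of $\mathcal S$ already invoked to obtain \eqref{eqn:226} and \eqref{eqn:227}, and one must check that the slice $w_\psi$ is a well-defined temperate distribution and that the computation of $\tilde Aw_\psi$ is legitimate on test functions of the non-tensor form. Everything else in the argument is a bookkeeping exercise with the Closed Range Theorem decompositions already at our disposal.
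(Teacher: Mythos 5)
Your proposal is correct and follows essentially the same route as the paper: reduce to $\tilde A\widehat\otimes I$ via Proposition \ref{pro:23}, use \eqref{eqn:226} to write $f=(\tilde A\widehat\otimes I)v+h$, kill $h$ by comparing with \eqref{eqn:227} through the identification $\mathcal M((\tilde A)')=\mathcal N((\tilde A)')$, and conclude from the injectivity of $\tilde A\widehat\otimes I$. The only difference is that the paper simply asserts $\ker(\tilde A\widehat\otimes I)=(\ker\tilde A)\widehat\otimes\mathcal S(\mathbb R)=0$ where you supply the (correct) slicing argument justifying it.
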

\begin{proof}
Thanks to Proposition \ref{pro:23} it is sufficient to prove that $\tilde A\widehat \otimes I$ is globally regular.

Consider  $u\in\mathcal S'(\mathbb R^2)$ such that $f=(\tilde A\widehat \otimes I) u\in\mathcal S(\mathbb R^2)$. Thanks to Proposition \ref{pro:40}, $(\tilde A)'$ is globally regular.  Since $f$ belongs to $\mathcal S(\mathbb R^2)$, by \eqref{eqn:226} there exist $v\in \mathcal S(\mathbb R^2)$ and $h\in  \mathcal N((\tilde A)')\widehat\otimes \mathcal S(\mathbb R)$
such that
$(\tilde A\widehat\otimes I)u=(\tilde A\widehat\otimes I)v+h$.
Since  $\ker (\tilde A)'\subset \mathcal S(\mathbb R)$, we have $\mathcal M((\tilde A)')=\mathcal N((\tilde A)')$ and
identity \eqref{eqn:227} implies that $h=0$.  Then $u=v\in\mathcal S(\mathbb R^2)$, because $\ker (\tilde A\widehat \otimes I)=(\ker \tilde A)\widehat\otimes \mathcal S(\mathbb R)=0$.
\end{proof}

\section{Global regularity of second order twisted differential operators}\label{sec:4}
\subsection{Statement of the results}

Global regularity of second order twisted differential operators can be characterized in a rather complete way.
We state two theorems, which are the main results of the paper. We prove these theorems  in Subsections \ref{subsec:1}, and \ref{subsec:2}.

\bigskip
Consider the second order twisted differential operator
\begin{equation*}
A=\sum_{j+k\le 2}(-1)^{j+k}a_{kj} (\alpha D_y-\beta M_x)^j (\gamma D_x-\delta M_y)^k,
 \end{equation*}
with source
\begin{equation*}
\tilde A=\sum_{j+k\le 2}a_{kj} M^jD^k.
 \end{equation*}

\begin{theorem}\label{thm:21}The following statements are equivalent.
\begin{enumerate}
\item\label{itm:17}
$A$ is globally regular.
\item\label{itm:18}
$\ker \tilde A=0$, and $\tilde A$ is  globally regular.
\item\label{itm:19}
$\ker \tilde A=0$, and $(\tilde A)'$ is  globally regular.
\item\label{itm:20}
$\ker \tilde A=0$, and $\ker (\tilde A)'\subset \mathcal S(\mathbb R)$.
\end{enumerate}
\end{theorem}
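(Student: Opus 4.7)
The plan is to prove the equivalences by closing the cycle (A) $\Rightarrow$ (B) $\Rightarrow$ (C) $\Rightarrow$ (D) $\Rightarrow$ (A); three of the four implications are read off at once from results already in hand.

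For the easy ones, (A) $\Rightarrow$ (B) is exactly Proposition \ref{pro:22}; (C) $\Rightarrow$ (D) holds because any $u \in \ker(\tilde A)'$ has $(\tilde A)'u = 0 \in \mathcal S(\mathbb R)$, so global regularity of $(\tilde A)'$ forces $u \in \mathcal S(\mathbb R)$; and (D) $\Rightarrow$ (A) is Theorem \ref{thm:19}.

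The substantive step is (B) $\Rightarrow$ (C). Assume $\ker \tilde A = 0$ and $\tilde A$ is globally regular; the goal is to show that $(\tilde A)'$ is globally regular. By Proposition \ref{pro:40}, since $\ker \tilde A = 0 \subset \mathcal S(\mathbb R)$ is automatic, it suffices to prove that $\ker(\tilde A)' \subset \mathcal S(\mathbb R)$. Set $V = \ker(\tilde A)'$, a finite-dimensional subspace of $\mathcal S'(\mathbb R)$, let $W = V \cap \mathcal S(\mathbb R)$, and write $V^\circ, W^\circ \subset \mathcal S(\mathbb R)$ for their respective annihilators in the distributional pairing. The Closed Range Theorem (Theorem \ref{thm:22}), exploited just before \eqref{eqn:385}, yields
\[
\tilde A\bigl(\mathcal S(\mathbb R)\bigr) = V^\circ \qquad\text{and}\qquad \tilde A\bigl(\mathcal S'(\mathbb R)\bigr) \cap \mathcal S(\mathbb R) = W^\circ.
\]
Global regularity of $\tilde A$ says precisely that $\tilde A(\mathcal S'(\mathbb R)) \cap \mathcal S(\mathbb R) \subset \tilde A(\mathcal S(\mathbb R))$, i.e., $W^\circ \subset V^\circ$, whereas the opposite inclusion $V^\circ \subset W^\circ$ is automatic from $W \subset V$. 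Thus $V^\circ = W^\circ$, and finite-dimensional annihilator duality (available because nonzero elements of $V$ separate some element of $\mathcal S(\mathbb R)$) forces $V = W$, i.e., $\ker(\tilde A)' \subset \mathcal S(\mathbb R)$, as required.

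I expect the main obstacle to be precisely this asymmetry argument: Proposition \ref{pro:40} is stated as an equivalence between a paired kernel condition and a paired global-regularity condition, so by itself it does not transfer global regularity from $\tilde A$ to $(\tilde A)'$. The Closed Range Theorem combined with the finite dimensionality of $\ker(\tilde A)'$ for an ordinary differential operator with polynomial coefficients is what removes the asymmetry. Notably, the second-order hypothesis plays no role in the proof of Theorem \ref{thm:21} itself; it is exploited only in the subsequent concrete characterizations (Theorem \ref{thm:32} and Proposition \ref{pro:47}) of when these equivalent conditions actually hold.
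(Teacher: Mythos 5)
Your proof is correct, and the cycle (\ref{itm:17})$\Rightarrow$(\ref{itm:18})$\Rightarrow$(\ref{itm:19})$\Rightarrow$(\ref{itm:20})$\Rightarrow$(\ref{itm:17}) with the three easy legs handled by Proposition \ref{pro:22}, triviality, and Theorem \ref{thm:19} is exactly the paper's skeleton. Where you genuinely diverge is the implication (\ref{itm:18})$\Rightarrow$(\ref{itm:19}). The paper proves it via Propositions \ref{pro:49} and \ref{pro:53}: it conjugates $\tilde A$ by a metaplectic unitary to a normal form $B$ with $b_{20}\ne 0$, observes that the Weyl symbol of the formal adjoint is the complex conjugate of that of $B$, and invokes the explicit second-order characterization of global regularity (Proposition \ref{pro:47}, via $\abs{x\Im\Xi_\pm(x)}\to\infty$), which is manifestly invariant under conjugation of the symbol. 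Your route instead stays entirely functional-analytic: from the Closed Range Theorem (Theorem \ref{thm:22}) you identify $\tilde A(\mathcal S(\mathbb R))$ and $\tilde A(\mathcal S'(\mathbb R))\cap\mathcal S(\mathbb R)$ with the annihilators of $V=\ker(\tilde A)'$ and $W=V\cap\mathcal S(\mathbb R)$, note that global regularity of $\tilde A$ gives $W^\circ\subset V^\circ$ hence equality, and conclude $V=W$ by finite-dimensional duality (the map $\mathcal S(\mathbb R)\to V^\ast$, $f\mapsto\langle\cdot\,\vert\,f\rangle\vert_V$, is onto because no nonzero temperate distribution annihilates all of $\mathcal S(\mathbb R)$); then Proposition \ref{pro:40} upgrades $\ker\tilde A=0$ and $\ker(\tilde A)'\subset\mathcal S(\mathbb R)$ to global regularity of $(\tilde A)'$. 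Each step checks out. What your approach buys is generality and economy: it uses none of the asymptotic machinery of Section \ref{sec:4} and none of the second-order structure, so it would prove the analogue of Theorem \ref{thm:21} for twisted operators of any order $m$ (given Nacinovich's closed-range result), confirming your closing remark; what the paper's approach buys is that Propositions \ref{pro:47}, \ref{pro:49}, and \ref{pro:53} are needed anyway for Theorem \ref{thm:16}, so the authors get (\ref{itm:18})$\Leftrightarrow$(\ref{itm:19}) essentially for free from infrastructure they must build regardless.
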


\begin{definition}
Two polynomials $p(x,\xi)$ and $q(x,\xi)$ are symplectically equivalent  if there exists a symplectic transformation\,\footnote{\
In dimension $2$ a symplectic transformation is a linear map with determinant equal to $1$.}
$\chi$ such that  $q=p\circ \chi$.
\end{definition}
\begin{lemma}\label{lem:11}
For any polynomial
\begin{equation*}
p(x,\xi)=\sum_{j+k\le 2} p_{kj}x^j\xi^k,
\end{equation*}
such that $\abs{p_{20}}+\abs{p_{11}}+\abs{p_{02}}>0$,
there is an infinite number of polynomials
\begin{equation*}
q(x,\xi)=\sum_{j+k\le 2} q_{kj}x^j\xi^k,
\end{equation*}
symplectically equivalent to $p$ and such that
$q_{20}\ne 0$.
\end{lemma}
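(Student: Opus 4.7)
The plan is to exhibit an explicit one-parameter family of symplectic transformations under which the coefficient of $\xi^2$ is nonzero. Throughout, a symplectic transformation in dimension $2$ is any linear map
\begin{equation*}
\chi(x,\xi)=(ax+b\xi,\,cx+d\xi),\qquad ad-bc=1.
\end{equation*}

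First I would simply compute the coefficient of $\xi^2$ in $q=p\circ\chi$ by expanding the three quadratic monomials: $p_{02}(ax+b\xi)^2$ contributes $p_{02}b^2\xi^2$, $p_{11}(ax+b\xi)(cx+d\xi)$ contributes $p_{11}bd\xi^2$, and $p_{20}(cx+d\xi)^2$ contributes $p_{20}d^2\xi^2$. The linear and constant terms of $p$ contribute no $\xi^2$. Therefore
\begin{equation*}
q_{20}=p_{02}b^2+p_{11}bd+p_{20}d^2=:Q(b,d).
\end{equation*}

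Next I observe that $Q(b,d)$ is a homogeneous quadratic form in the two variables $b,d$, and by the hypothesis $\abs{p_{20}}+\abs{p_{11}}+\abs{p_{02}}>0$ it is not identically zero. Hence its zero set in the real plane $\mathbb R^2$ is contained in the union of at most two lines through the origin, so the complement of $Q^{-1}(0)$ in $\mathbb R^2$ is an open set of full measure and in particular contains infinitely many pairs $(b,d)$.

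Finally, for each such pair $(b,d)\ne(0,0)$ I would complete it to a symplectic matrix: if $d\ne 0$, pick any $c\in\mathbb R$ and set $a=(1+bc)/d$; if $d=0$, then $b\ne 0$ and any $a\in\mathbb R$ together with $c=-1/b$ gives $ad-bc=1$. In either case, we obtain a one-parameter family of $\chi$'s, all yielding the same $q_{20}=Q(b,d)\ne 0$, so in fact infinitely many symplectically equivalent polynomials with $q_{20}\ne 0$. There is no real obstacle here; the only thing to be careful about is the ``not identically zero'' step, which is exactly where the assumption $\abs{p_{20}}+\abs{p_{11}}+\abs{p_{02}}>0$ enters.
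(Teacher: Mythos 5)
Your proof is correct and is essentially the paper's argument carried out in slightly greater generality: the paper simply takes the shear $\chi(x,\xi)=(x+\theta\xi,\xi)$, i.e.\ the special case $(a,b,c,d)=(1,\theta,0,1)$ of your computation, for which your form $Q(b,d)$ becomes $p_{20}+\theta p_{11}+\theta^2 p_{02}$, a not-identically-zero polynomial in $\theta$ with at most two roots. The completion of $(b,d)$ to a symplectic matrix and the at-most-two-lines observation for the (complex-valued) quadratic form $Q$ are the only extra steps your version requires, and both are handled correctly.
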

\begin{proof}
It is sufficient to consider $\chi(x,\xi)=(x+\theta\xi,\xi)$, where $\theta\in\mathbb R$ is such that $p_{20}+\theta p_{11}+\theta^2 p_{02}\ne 0$.
\end{proof}
Recall that  the  \emph{Weyl symbol} (see \cite[Definition 23.5]{Shubin}) of a differential operator
\begin{equation*}
P=p_{20}D^2+p_{11}MD+p_{02}M^2+p_{10}D+p_{01}M+p_{00}I
\end{equation*}
is given by
\begin{equation*}
p(x,\xi)=p_{20}\xi^2+p_{11}x\xi+p_{02}x^2+p_{10}\xi+p_{01}x+p_{00}+\frac {\mathrm i}2\, p_{11}.
\end{equation*}
Denote by $\mathcal B$ the set of polynomials
\begin{equation*}
b(x,\xi)=b_{20}\xi^2+b_{11}x\xi+b_{02}x^2+b_{10}\xi+b_{01}x+b_{00}+\frac {\mathrm i}2\, b_{11},
\end{equation*}
 with $b_{20}\ne 0$, and symplectically equivalent to the Weyl symbol of $\tilde A$.

Since the order of $A$ is $2$, we have $\abs{a_{20}}+\abs{a_{11}}+\abs{a_{02}}> 0$. Then Lemma \ref{lem:11} implies that
$\mathcal B\ne \emptyset$.

For all $b\in\mathcal B$, set
\begin{equation}\label{eqn:801}
\begin{cases}
\Delta_2=b_{11}^2-4b_{20}b_{02}, \\
\Delta_1=2b_{11}b_{10}-4b_{20}b_{01}, \\
\Delta_0=b_{10}^2-4b_{20}b_{00}-2\mathrm i b_{20}b_{11},
\end{cases}
\end{equation}
\begin{equation*}
\lambda=\frac 18\left(-\frac {\Delta_2}{b_{20}^2}\right)^{\!\!-\frac 32}\frac {\Delta_1^2-4\Delta_2\Delta_0}{b_{20}^4},
\end{equation*}
and
\begin{equation}\label{eqn:818}
\Xi_\pm(x)=\begin{cases}\displaystyle
-\frac 12 \left\{\frac {b_{11}}{b_{20}}x+\frac {b_{10}}{b_{20}}\pm
\sigma\!\left(\frac {\Delta_2}{b_{20}^2}\right)\left(\frac {\Delta_2}{b_{20}^2}\right)^{\!\!\frac 12}
x\left(1+\frac {\Delta_1}{\Delta_2x}+\frac {\Delta_0}{\Delta_2 x^2}\right)^{\!\!\frac 12}\right\},
&\text{\upshape if $\Delta_2\ne 0$},
\\[10pt] \displaystyle
-\frac 12\left\{\frac {b_{11}}{b_{20}}\,x+\frac {b_{10}}{b_{20}}
\pm\sigma\!\left(\frac{\Delta_1}{b_{20}^2}\right)\left(\frac{\Delta_1}{b_{20}^2}\right)^{\!\!\frac 12}
x^{\frac 12}\left(1+\frac{\Delta_0}{\Delta_1x}\right)^{\!\!\frac 12}\right\},
&\text{\upshape if $\Delta_2= 0\ne \Delta_1$},
\\[10pt] \displaystyle
-\frac 12\left\{\frac {b_{11}}{b_{20}}\,x+\frac {b_{10}}{b_{20}}
\pm \sigma\!\left(\frac {\Delta_0}{b_{20}^2}\right)\left(\frac {\Delta_0}{b_{20}^2}\right)^{\!\!\frac 12}\right\},
&\text{\upshape if $\Delta_2=\Delta_1= 0$}.\end{cases}
\end{equation}
$\xi=\Xi_\pm$ are  the complex roots of the Weyl symbol of $B$:
\begin{equation*}
b(x,\xi)=b_{20}\xi^2+(b_{11}x+b_{10})\xi+b_{02}x^2+b_{01}x+b_{00}+\frac {\mathrm i}2 \,b_{11}.
\end{equation*}

\begin{theorem}\label{thm:16}
The following conditions are equivalent.
\begin{enumerate}
\item
$A$ is globally regular.
\item
There exists
 $b\in\mathcal B$ such that
\begin{equation*}
\mathrm e^{\mathrm i x\Xi_\pm}\notin \mathcal S',
\end{equation*}
 or
\begin{equation*}
\mathrm e^{\mathrm i x\Xi_-}\notin \mathcal S',\quad
\mathrm e^{\mathrm i x\Xi_+}\in \mathcal S,\quad \Delta_2\ne 0, \quad\lambda\notin\{1+2n:n\in\mathbb Z_+\},
\end{equation*}
 or
\begin{equation*}
\mathrm e^{\mathrm i x\Xi_-}\notin \mathcal S',\quad
\mathrm e^{\mathrm i x\Xi_+}\in \mathcal S,\quad \Delta_2= 0.
\end{equation*}
\item
For all
 $b\in\mathcal B$ we have
\begin{equation*}
\mathrm e^{\mathrm i x\Xi_\pm}\notin \mathcal S',
\end{equation*}
 or
\begin{equation*}
\mathrm e^{\mathrm i x\Xi_-}\notin \mathcal S',\quad
\mathrm e^{\mathrm i x\Xi_+}\in \mathcal S,\quad \Delta_2\ne 0, \quad\lambda\notin\{1+2n:n\in\mathbb Z_+\},
\end{equation*}
 or
\begin{equation*}
\mathrm e^{\mathrm i x\Xi_-}\notin \mathcal S',\quad
\mathrm e^{\mathrm i x\Xi_+}\in \mathcal S,\quad \Delta_2= 0.
\end{equation*}
\end{enumerate}
\end{theorem}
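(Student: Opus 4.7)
The plan is to combine Theorem \ref{thm:21} with the two forthcoming characterizations of second order ordinary differential operators in the paper: Proposition \ref{pro:47} characterizing global regularity of the source, and Theorem \ref{thm:32} characterizing its injectivity. By Theorem \ref{thm:21}, statement (A) is equivalent to $\ker\tilde A=0$ together with the global regularity of $\tilde A$; so the goal reduces to showing that these two conditions on $\tilde A$ are precisely encoded by the three-way disjunction displayed in (B) for \emph{some} $b\in\mathcal B$ or in (C) for \emph{all} $b\in\mathcal B$.

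The first step is a symplectic reduction. Any $b\in\mathcal B$ is, by definition, the Weyl symbol of the operator obtained from $\tilde A$ by composing the Weyl symbol with a linear symplectic transformation $\chi$. On the operator level this is implemented by a metaplectic unitary, which is a topological automorphism of both $\mathcal S(\mathbb R)$ and $\mathcal S'(\mathbb R)$. Hence global regularity of the operator and triviality of its kernel in $\mathcal S'$ are preserved, so each condition is independent of the chosen representative $b\in\mathcal B$. In particular the equivalence of (B) and (C) will follow automatically once the conditions in (B)/(C) are shown to characterize (A) for any single representative. Lemma \ref{lem:11} guarantees $\mathcal B\neq\emptyset$, so we may restrict attention to $b$ with $b_{20}\neq 0$, which brings $B$ into a normal form where the coefficient of $D^2$ is nonzero and hence amenable to WKB analysis.

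The second step applies the results of Section \ref{sec:4}. With $b_{20}\neq 0$ the Weyl symbol factors as $b(x,\xi)=b_{20}(\xi-\Xi_+(x))(\xi-\Xi_-(x))$ with $\Xi_\pm$ given by \eqref{eqn:818}; standard WKB reasoning yields two linearly independent formal solutions of $Bu=0$ behaving asymptotically like $\exp(\mathrm i x\Xi_\pm)$ times explicit algebraic prefactors, the parameter $\lambda$ in \eqref{eqn:818} appearing as the subleading polynomial exponent. The content of Proposition \ref{pro:47} is that $\tilde A$ is globally regular exactly when the $\mathcal S'$-behavior of these two exponentials obeys the disjunctive condition displayed in the theorem; Theorem \ref{thm:32} then refines this by identifying when a globally regular $B$ is also one-to-one. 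The borderline case, in which $e^{\mathrm i x\Xi_-}\notin\mathcal S'$ while $e^{\mathrm i x\Xi_+}\in\mathcal S$, is precisely a Hermite-type eigenvalue problem whose quantized spectrum is the set $\{1+2n:n\in\mathbb Z_+\}$, so excluding this arithmetic set in the case $\Delta_2\neq 0$ is exactly what rules out a Schwartz element of $\ker B$; when $\Delta_2=0$ the spectrum degenerates and no such exceptional set appears.

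The main obstacle is controlling the borderline case just described. Producing the precise quantization condition $\lambda\notin\{1+2n:n\in\mathbb Z_+\}$ requires the sharp asymptotic expansions of parabolic cylinder / confluent hypergeometric functions in sectors of the complex plane, together with the connection formulae relating their behavior at $+\infty$ and $-\infty$. These are the results deferred to Sections \ref{sec:5} and \ref{sec:6}; once they are in place, tracing the asymptotic behavior of each formal solution through $\Xi_\pm$ gives the precise translation between the abstract statements $\ker\tilde A=0$ and global regularity, and the concrete test on the exponentials $e^{\mathrm i x\Xi_\pm}$ and on $\lambda$ that appears in (B) and (C).
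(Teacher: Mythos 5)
Your proposal is correct and follows essentially the same route as the paper: Theorem \ref{thm:21} reduces (A) to the kernel and regularity conditions on the source, Proposition \ref{pro:49} (the metaplectic conjugation) transfers these to any representative $B$ with symbol $b\in\mathcal B$ and makes the conditions independent of the choice of $b$ (whence (B) $\iff$ (C), with Lemma \ref{lem:11} ensuring $\mathcal B\ne\emptyset$), and Theorem \ref{thm:32} supplies the concrete characterization via $\mathrm e^{\mathrm i x\Xi_\pm}$ and $\lambda$. The paper's proof is exactly this chain of citations, so no further comment is needed.
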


\subsection{Proof of Theorem \ref{thm:21}}\label{subsec:1}
Let
\begin{equation}\label{eqn:800}
B=b_{20}D^2+b_{11}MD+b_{02}M^2+b_{10}D+b_{01}M+b_{00}I
\end{equation}
be a differential operator with Weyl symbol $b\in \mathcal B$.

As for the source of a twisted differential operator, also the kernel of $B$ is considered in the sense of temperate distributions:
\begin{equation*}
\ker B=\{u\in \mathcal S'(\mathbb R)\mid B u=0\}.
\end{equation*}

\begin{proposition}\label{pro:47}
The following conditions are equivalent.
\begin{enumerate}
\item\label{itm:16}
$\lim\limits_{\abs{x}\to \infty}\abs{ x\Im \Xi_\pm(x)}=\infty$.
\item\label{itm:45}
$\mathrm e^{\mathrm i x\Xi_\pm(x)}\in\mathcal S\cup(\mathcal C^\infty\setminus\mathcal S')$.
\item\label{itm:22}
$B$ is globally regular.
\end{enumerate}
\end{proposition}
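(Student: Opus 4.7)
My strategy is to establish (A)$\iff$(B) directly by an asymptotic computation using formula \eqref{eqn:818}, and to establish (A)$\iff$(C) via a WKB analysis of the solutions of $Bu=0$.

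For (A)$\iff$(B): From \eqref{eqn:818}, $\Xi_\pm(x)$ admits a convergent algebraic asymptotic expansion at $\pm\infty$ whose dominant term is linear in $x$ (if $\Delta_2\ne 0$), of order $x^{1/2}$ (if $\Delta_2=0\ne \Delta_1$), or constant (if $\Delta_2=\Delta_1=0$). In each case $\Xi_\pm$ is smooth for $|x|$ large and $x\Im\Xi_\pm(x)$ has polynomial asymptotics. Since $\lvert \mathrm e^{\mathrm i x\Xi_\pm(x)}\rvert = \mathrm e^{-x\Im\Xi_\pm(x)}$, the function is either super-polynomially decaying at an end (when $x\Im\Xi_\pm\to +\infty$ there) or super-polynomially growing (when $x\Im\Xi_\pm\to -\infty$). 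Differentiating the exponential produces polynomial prefactors from $\Xi_\pm$ and its derivatives, so in the decaying case all Schwartz seminorms are controlled, while in the growing case the function fails to be tempered. If instead $|x\Im\Xi_\pm|$ is bounded along some sequence at an end, then $\lvert \mathrm e^{\mathrm i x\Xi_\pm}\rvert$ is bounded but does not decay there, so the function is tempered yet not Schwartz. This gives (A)$\iff$(B).

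For (A)$\iff$(C): I will invoke the standard WKB description for the ODE $Bu=0$. The eikonal equation $b(x,\phi'(x))=0$ gives $\phi'(x)=\Xi_\pm(x)$, so two linearly independent formal solutions are $u_\pm(x)\sim \mathrm e^{\mathrm i\int^x \Xi_\pm(s)\,\mathrm d s}\,x^{\kappa_\pm}$ with $\kappa_\pm$ coming from the transport equation, and these formal expansions are realized by genuine solutions with rigorous asymptotics in appropriate sectors (developed in Sections \ref{sec:5} and \ref{sec:6}). Because the asymptotic expansion of $\Xi_\pm$ is polynomial-plus-$O(1/x)$, the integral $\int^x \Xi_\pm$ and $x\Xi_\pm(x)$ differ by strictly lower order terms, so $u_\pm$ lies in $\mathcal S$ at a given end iff $\mathrm e^{\mathrm i x\Xi_\pm(x)}$ does, and is non-tempered there iff $\mathrm e^{\mathrm i x\Xi_\pm(x)}$ is. Assuming (A), take $u\in\mathcal S'$ with $f=Bu\in\mathcal S$. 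A parametrix for $B$ produces a particular $u_p\in\mathcal S$ with $Bu_p=f$, so $u-u_p\in\ker B\cap \mathcal S'$. Every element of $\ker B$ is a linear combination of $u_+$ and $u_-$ at each end; since each of them is either Schwartz or not tempered, temperance of $u-u_p$ forces the non-tempered modes to vanish, hence $u-u_p\in\mathcal S$ and thus $u\in\mathcal S$. Conversely, if (A) fails, one of $\mathrm e^{\mathrm i x\Xi_\pm}$ is tempered but not Schwartz, so the corresponding $u_\pm$ yields a tempered solution of $Bu_\pm=0\in\mathcal S$ that is not Schwartz, contradicting global regularity.

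\textbf{Main obstacle.} The genuinely delicate step is the rigorous connection between the formal WKB behavior $\mathrm e^{\mathrm i\int \Xi_\pm}$ and actual solutions of $Bu=0$, together with the claim that \emph{every} tempered $u$ with $Bu\in\mathcal S$ acquires the full asymptotic decomposition into these two modes at each infinity. This requires case analysis in the three regimes $\Delta_2\ne 0$, $\Delta_2=0\ne\Delta_1$, $\Delta_2=\Delta_1=0$, since after suitable gauge and change of variable the equation reduces to a confluent hypergeometric / Weber equation, a square-root turning point problem, or a constant-coefficient equation, respectively; the required sharp two-sided asymptotics of these special functions in the relevant sectors are precisely what Sections \ref{sec:5} and \ref{sec:6} are designed to supply. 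A secondary care-point is verifying that the algebraic prefactor $x^{\kappa_\pm}$ (and any logarithmic corrections) cannot tip a super-polynomially decaying mode out of $\mathcal S$ or a super-polynomially growing mode back into $\mathcal S'$, which it cannot since condition (A) controls the exponential rate.
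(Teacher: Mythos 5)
Your equivalence (A)$\iff$(B) is fine and is exactly what the paper does (it is the ``obvious'' step). The problem is with (A)$\iff$(C), where the entire content of the proposition is hidden in the sentence ``A parametrix for $B$ produces a particular $u_p\in\mathcal S$ with $Bu_p=f$.'' This claim is not only unproved, it is false as stated in several of the regimes the proposition must cover. First, if both WKB modes are non-tempered at some end (e.g.\ the fully degenerate case $\Delta_2=\Delta_1=\Delta_0=0$ with $\Im\frac{b_{11}}{b_{20}}>0$), then \emph{every} solution of $Bu=f$ grows like $\mathrm e^{c x^2}$ and there is no Schwartz (indeed no tempered) particular solution; global regularity holds there vacuously, not via your decomposition. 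Second, even when a recessive mode exists, your $f$ only lies in $B(\mathcal S')\cap\mathcal S$, and passing to $f\in B(\mathcal S)$ requires the closed-range/duality identifications \eqref{eqn:385}--\eqref{eqn:386} together with information about $\ker B'$ --- precisely the kind of statement you are trying to prove. Third, the variation-of-parameters construction of $u_p$ and the claim that temperance of $u-u_p$ kills the non-tempered modes both require a quantitative separation of the two roots $\Xi_+-\Xi_-$ (otherwise the two modes can interfere); this is the extra hypothesis in Theorem 1.2 of \cite{Nicola-Rodino:2}, which the paper explicitly verifies (the equivalence of its conditions (a) and (b)) before invoking that theorem, and which your sketch never addresses.

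By contrast, the paper's route is: reduce the generic case ($\Delta_2x^2+\Delta_1x+\Delta_0\not\equiv 0$) to the cited Nicola--Rodino theorem after checking its root-separation hypothesis, and treat the remaining case $\Delta_2=\Delta_1=\Delta_0=0$ by writing the solution in closed form \eqref{eqn:496} and estimating it directly (including the sub-case where no tempered solution exists). If you want to keep your WKB outline, you must (i) actually construct $u_p$ by variation of parameters with end-point choices dictated by which mode is recessive, and prove the resulting Schwartz estimates; (ii) handle separately the case where both modes are dominant, arguing vacuity; and (iii) justify $B(\mathcal S')\cap\mathcal S=B(\mathcal S)$ under hypothesis (A). As it stands the argument has a genuine gap at its central step.
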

\begin{proof}
It is obvious that \eqref{itm:16}$\iff$\eqref{itm:45}. Let us prove \eqref{itm:16}$\iff$\eqref{itm:22}.

Assume $\Delta_2=\Im \frac {b_{11}}{b_{20}}=0$. Then it is easy to verify that the following conditions are equivalent.
\begin{enumerate}
\item[(a)]
There exists $\epsilon>0$ such that
\begin{equation*}
\max\left\{\abs{\Xi_+(x)+\frac{b_{11}}{2b_{20}}x},\,\abs{\Xi_-(x)+\frac{b_{11}}{2b_{20}}x},\,\abs{x}^{\epsilon-1}\right\}
=\mathcal O\left(\abs{\Xi_+(x)-\Xi_-(x)}\right),\quad\text{\upshape for $\abs{x}\to\infty$}.
\end{equation*}
\item[(b)]
$\Delta_1 x+\Delta_0$ does not vanish identically.
\end{enumerate}
If $\Delta_2 x^2+\Delta_1 x+\Delta_0$ does not vanish identically, it follows that we can apply Theorem 1.2 of \cite{Nicola-Rodino:2}, obtaining that  \eqref{itm:16} is equivalent to  \eqref{itm:22}.

\smallskip
If $\Delta_2=\Delta_1=\Delta_0=0$, the equation $Bu=f$ can be solved explicitly:
\begin{equation}\label{eqn:496}
u(x)=-\mathrm e^{-\frac{\mathrm i}{4b_{20}}(b_{11} x^2+2b_{10}x)}\left\{\frac{1}{b_{20}}\int_0^x(x-t)\,\mathrm e^{\frac{\mathrm i}{4b_{20}}(b_{11} t^2+2b_{10}t)}f(t)\,\mathrm d t+
c_0x+c_1\right\},
\end{equation}
where $c_0$ and $c_1$ are arbitrary constants.

Since  $x\Xi_{\pm}=-\frac 1{2b_{20}}(b_{11}x^2+b_{10}x)$,
 we have to show that
\begin{equation*}
\left(\Im \frac {b_{11}}{b_{20}}\right)^2+\left(\Im \frac {b_{10}}{b_{20}}\right)^2>0 \iff \text{\upshape $B$ is globally regular}.
\end{equation*}

Assume $\left(\Im \frac {b_{11}}{b_{20}}\right)^2+\left(\Im \frac {b_{10}}{b_{20}}\right)^2>0$, and $f \in \mathcal S$. Then we have to prove that
$u$
belongs to $\mathcal S\cup (\mathcal C^\infty\setminus \mathcal S')$.

If $\Im \frac {b_{11}}{b_{20}}<0$, set
\begin{equation*}
v(x)=\mathrm e^{-h(x)}\int_0^x(x-t)\mathrm e^{h(t)}f(t)\,\mathrm d t,
\end{equation*}
with
\begin{equation*}
h(x)=\frac{\mathrm i}{4b_{20}}(b_{11} x^2+2 b_{10}x).
\end{equation*}
If we show that $v\in \mathcal S$, we have that $u\in\mathcal S$.

It is clear that for all $n\in\mathbb Z_+$ there exist polynomials $P_n(x)$ and $Q_n(x)$ of degree $n$ such that\,\footnote{\
Definition \eqref{eqn:463} is equivalent to define by induction
\begin{equation*}
P_{n}=\begin{cases}1,&\text{\upshape if $n=0$},\\ P_{n-1}'-P_{n-1} h',&\text{\upshape if $n\ge 1$},\end{cases}, \qquad
Q_{n}=\begin{cases}1,&\text{\upshape if $n=0$},\\ Q_{n-1}'+Q_{n-1} h',&\text{\upshape if $n\ge 1$}.\end{cases}\end{equation*}}
\begin{equation}\label{eqn:463}
\frac{\mathrm d^n}{\mathrm d x^n}\, \mathrm e^{-h(x)}=P_n(x)\mathrm e^{-h(x)},\quad
\frac{\mathrm d^n}{\mathrm d x^n}\, \mathrm e^{h(x)}=Q_n(x)\mathrm e^{h(x)},\qquad \text{\upshape for $n\ge 0$}.
\end{equation}
Then we have
\begin{equation*}
v'(x)=-h'(x)\mathrm e^{-h(x)}\int_0^x(x-t)\mathrm e^{h(t)}f(t)\,\mathrm d t+\mathrm e^{-h(x)}\int_0^x \mathrm e^{h(t)}f(t)\,\mathrm d t,
\end{equation*}
and
\begin{equation*}\begin{aligned}
v^{(n)}(x)&=P_n(x)\mathrm e^{-h(x)}\int_0^x(x-t)\mathrm e^{h(t)}f(t)\,\mathrm d t+
n P_{n-1}(x)\mathrm e^{-h(x)}\int_0^x \mathrm e^{h(t)}f(t)\,\mathrm d t+
\\&+\sum_{k=2}^n \binom {n}{k} P_{n-k}(x) \sum_{j=0}^{k-2} \binom {k-2}{j} Q_{k-2-j}(x) f^{(j)}(x),\qquad\text{\upshape for $n\ge 2$}.
\end{aligned} \end{equation*}
Since $f\in \mathcal S$, we have
\begin{equation*}
\lim_{\abs{x}\to\infty}x^m\sum_{k=2}^n \binom {n}{k} P_{n-k}(x) \sum_{j=0}^{k-2} \binom {k-2}{j} Q_{k-2-j}(x) f^{(j)}(x)=0,\qquad\forall m\in\mathbb Z_+.
\end{equation*}
On the other side, since $\Re\left(\frac{\mathrm i}{4b_{20}}b_{11} (t^2-x^2)\right)=\Im \frac {b_{11}}{4b_{20}}(x^2-t^2)<0$, for $x>t$, we have
\begin{equation*}\begin{aligned}
&\lim_{\abs{x}\to\infty} x^m\Bigl\{P_n(x)\mathrm e^{-h(x)}\int_0^x(x-t)\mathrm e^{h(t)}f(t)\,\mathrm d t+
n P_{n-1}(x)\mathrm e^{-h(x)}\int_0^x \mathrm e^{h(t)}f(t)\,\mathrm d t\Bigr\}=
\\&\qquad=\lim_{\abs{x}\to\infty} \int_0^x x^m\Bigl\{x P_n(x)+
n P_{n-1}(x)-P_n(x)t\Bigr\}\mathrm e^{\frac{\mathrm i}{4b_{20}}[b_{11} (t^2-x^2)+2 b_{10}(t-x)]}f(t)\,\mathrm d t=0,
\end{aligned}\end{equation*}
by Dominated Convergence Theorem.
Then we have shown that $\lim_{\abs{x}\to 0}x^m v^{(n)}(x)=0$ for all $m,n\in\mathbb Z_+$. It follows that $u\in \mathcal S$, that is that $B$ is globally regular.

\bigskip
If $\Im \frac {b_{11}}{b_{20}}>0$,
$\mathrm e^{\frac{\mathrm i}{4b_{20}}(b_{11} t^2+2b_{10}t)}f(t)$ belongs to $\mathcal S$. Then
\begin{equation*}
\int_0^{\pm \infty} \mathrm e^{\frac{\mathrm i}{4b_{20}}(b_{11} t^2+2b_{10}t)}f(t)\,\mathrm d t\quad\text{\upshape and}\quad
\int_0^{\pm \infty} t\,\mathrm e^{\frac{\mathrm i}{4b_{20}}((b_{11} t^2+2b_{10}t)}f(t)\,\mathrm d t
\end{equation*}
are convergent, so $u$ grows at infinity as $(1+\abs{x})\mathrm e^{\Im\frac {b_{11}}{4b_{20}} x^2}$ and  cannot belong to $\mathcal S'$.

\bigskip
If $\Im \frac {b_{11}}{b_{20}}=0$ and $\Im \frac {b_{10}}{b_{20}}> 0$,
\begin{equation*}
\int_0^{+\infty} \mathrm e^{\frac{\mathrm i}{4b_{20}}(b_{11} t^2+2b_{10}t)}f(t)\,\mathrm d t\quad\text{\upshape and}\quad
\int_0^{+\infty} t\,\mathrm e^{\frac{\mathrm i}{4b_{20}}(b_{11} t^2+2b_{10}t)}f(t)\,\mathrm d t
\end{equation*}
are convergent, so  $u$ grows    as $(1+x)\mathrm e^{\Im \frac{b_{10}}{2b_{20}} x}$ for $x\to +\infty$ and  cannot belong to $\mathcal S'$.

If $\Im \frac {b_{10}}{b_{20}}<0$, $u$ grows    as $(1-x)\mathrm e^{\Im \frac {b_{10}}{2b_{20}} x}$ for $x\to -\infty$ and  again cannot belong to $\mathcal S'$.

On the contrary, if $B$ is globally regular, from \eqref{eqn:496} with $f=0$, $c_0=0$, and $c_1=1$, we get that
\begin{equation*}
\mathrm e^{-\frac{\mathrm i}{b_{20}}(b_{11} x^2+2b_{10}x)}\in \mathcal S\cup (\mathcal C^\infty\setminus \mathcal S'),
\end{equation*}
which in turn implies $\left(\Im \frac {b_{11}}{b_{20}}\right)^2+\left(\Im \frac {b_{10}}{b_{20}}\right)^2>0$.
\end{proof}

\begin{proposition}\label{pro:53}
$B$ is globally regular  if and only if $B'$ is globally regular.
\end{proposition}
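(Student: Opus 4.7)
The plan is to reduce the statement to the asymptotic characterisation (A)$\Leftrightarrow$(C) of Proposition \ref{pro:47}: both $B$ and $B'$ are globally regular precisely when the roots $\Xi_\pm$ of the corresponding Weyl symbol satisfy $\lim_{\abs{x}\to\infty}\abs{x\Im\Xi_\pm(x)}=\infty$. I will show that transposition merely flips the signs of these roots, so the condition is unaffected.

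First I would put $B'$ into the normal form \eqref{eqn:800}. Starting from the definition
\begin{equation*}
B'=b_{20}D^2-b_{11}DM+b_{02}M^2-b_{10}D+b_{01}M+b_{00}I
\end{equation*}
and commuting $D$ past $M$ using $[D,M]=-\mathrm iI$ gives
\begin{equation*}
B'=b_{20}D^2+(-b_{11})MD+b_{02}M^2+(-b_{10})D+b_{01}M+(b_{00}+\mathrm ib_{11})I,
\end{equation*}
so the relevant coefficients for $B'$ are $b'_{20}=b_{20}$, $b'_{11}=-b_{11}$, $b'_{02}=b_{02}$, $b'_{10}=-b_{10}$, $b'_{01}=b_{01}$, $b'_{00}=b_{00}+\mathrm ib_{11}$. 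Since $b'_{20}\ne 0$, the construction \eqref{eqn:801}--\eqref{eqn:818} can be applied verbatim to $B'$, and Proposition \ref{pro:47} is available for $B'$ just as it is for $B$.

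Substituting the primed coefficients into \eqref{eqn:801}, a short check shows that the three discriminants are invariant,
\begin{equation*}
\Delta'_2=\Delta_2,\qquad \Delta'_1=\Delta_1,\qquad \Delta'_0=\Delta_0,
\end{equation*}
the only subtlety being that the extra $\mathrm ib_{11}$ inside $b'_{00}$ is exactly cancelled by the shift in $-2\mathrm ib'_{20}b'_{11}$. Algebraically this is inevitable because the Weyl symbol of $B'$ is $b(x,-\xi)$: for each fixed $x$ the unordered pair of roots in $\xi$ is $\{-\Xi_+(x),-\Xi_-(x)\}$. Inspecting formula \eqref{eqn:818} in each of the three branch cases ($\Delta_2\ne 0$; $\Delta_2=0\ne\Delta_1$; $\Delta_2=\Delta_1=0$), the sign changes $b'_{11}/b'_{20}=-b_{11}/b_{20}$, $b'_{10}/b'_{20}=-b_{10}/b_{20}$ together with the invariance of the $\Delta$'s yield the identity $\Xi'_\pm(x)=-\Xi_\mp(x)$, so that $\abs{x\Im\Xi'_\pm(x)}=\abs{x\Im\Xi_\mp(x)}$ for all $x$. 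Therefore condition (A) of Proposition \ref{pro:47} holds for $B'$ iff it holds for $B$, and the equivalence between global regularity of $B$ and of $B'$ follows at once.

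The only tedious step is the case-by-case verification of the identity $\Xi'_\pm=-\Xi_\mp$ in \eqref{eqn:818}, but since the radicals appearing there depend only on the (invariant) $\Delta$'s while only the linear part $(b_{11}/b_{20})x+b_{10}/b_{20}$ changes sign, this is purely mechanical and presents no genuine obstacle.
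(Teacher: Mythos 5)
Your argument is correct and pivots on the same key fact as the paper: Proposition \ref{pro:47} reduces global regularity to the condition $\lim_{\abs{x}\to\infty}\abs{x\Im\Xi_\pm(x)}=\infty$, so one only has to check that this condition is insensitive to transposition. Where you differ is in the symmetry you exploit. The paper passes through the formal adjoint $B^\ast=\overline{B'}$, observes that $B'u=f$ is equivalent to $B^\ast\bar u=\bar f$ (so $B'$ and $B^\ast$ are simultaneously globally regular), and uses the fact that the Weyl symbol of $B^\ast$ is $\bar b$; the roots are then conjugated, and $\abs{x\Im\overline{\Xi_\pm}}=\abs{x\Im\Xi_\pm}$ finishes the proof with no computation of the $\Delta_j$. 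You instead normal-order $B'$ directly, verify that $\Delta_2,\Delta_1,\Delta_0$ are invariant (including the cancellation of the $\mathrm i b_{11}$ contributions in $\Delta_0$), identify the Weyl symbol of $B'$ as $b(x,-\xi)$, and conclude $\Xi'_\pm=-\Xi_\mp$. Your route costs a little more algebra but is self-contained and makes the relation between the two symbols explicit; note that both arguments tacitly apply Proposition \ref{pro:47} to an operator whose symbol need not lie in $\mathcal B$ (conjugation and $\xi\mapsto-\xi$ are not symplectic compositions), which is harmless since only $b_{20}\ne 0$ is used in its proof, so you are no worse off than the paper on that point.
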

\begin{proof}
Consider  the \emph{formal adjoint} $B^\ast=\overline{B'}$.
Since $B'=f$ is equivalent to $B^\ast=\bar f$, $B'$ is globally regular if and only if $B^\ast$ is globally regular.

A simple computation shows that
the Weyl symbol of $B^\ast$ is the complex conjugate of the Weyl symbol of $B$.
Then, since $\abs{ x\Im \Xi_\pm(x)}=\abs{ x\Im \overline{\Xi_\pm(x)}}$,
the statement follows from Proposition \ref{pro:47}.
\end{proof}

\begin{proposition}\label{pro:49}
We have
\begin{gather}\label{eqn:414}
\ker \tilde A=0 \iff \ker B=0,
\\\label{eqn:697}
\text{\upshape $\tilde A$ is globally regular if and only if $B$ is globally regular},
\intertext{\upshape and}\label{eqn:442}
\text{\upshape $(\tilde A)'$ is globally regular if and only if $B'$ is globally regular}.
\end{gather}
\end{proposition}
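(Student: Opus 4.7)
The plan is to deduce all three equivalences from the single fact that $B$ and $\tilde A$ are conjugate by an operator $V$ which is a topological automorphism of both $\mathcal S(\mathbb R)$ and $\mathcal S'(\mathbb R)$.

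By Lemma \ref{lem:11} the symplectic map witnessing the equivalence between the Weyl symbols of $\tilde A$ and $B$ can be taken to be the shear $\chi(x,\xi)=(x+\theta\xi,\xi)$ for some $\theta\in\mathbb R$. Its metaplectic counterpart is the Fourier multiplier
\begin{equation*}
V=\mathrm e^{\mathrm i\theta D^2/2},\qquad \widehat{Vu}(\xi)=\mathrm e^{\mathrm i\theta\xi^2/2}\,\widehat u(\xi).
\end{equation*}
Since $\mathrm e^{\pm\mathrm i\theta\xi^2/2}$ are smooth functions with polynomially bounded derivatives of every order, $V$ and $V^{-1}=\mathrm e^{-\mathrm i\theta D^2/2}$ are continuous on $\mathcal S(\mathbb R)$ and, by transposition, on $\mathcal S'(\mathbb R)$. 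Differentiating $VMV^{-1}$ in $\theta$ and using $[M,D]=\mathrm iI$ one finds $VMV^{-1}=M+\theta D$, while $VDV^{-1}=D$ trivially. Substituting these two identities into $\tilde A=\sum a_{kj}M^jD^k$ and normal-ordering reproduces the six coefficients of $B$ one obtains directly from $b=\tilde a\circ\chi$, the shift $b_{00}=a_{00}-\mathrm i\theta a_{02}$ being produced by the reordering of $(M+\theta D)^2$. Hence $B=V\tilde A V^{-1}$ as operators on $\mathcal S'(\mathbb R)$.

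All three assertions now follow at once. Bijectivity of $V$ on $\mathcal S'(\mathbb R)$ gives $\ker B=V(\ker\tilde A)$, which proves \eqref{eqn:414}. For \eqref{eqn:697}, assuming $\tilde A$ globally regular and $Bv\in\mathcal S(\mathbb R)$, one has $\tilde A(V^{-1}v)=V^{-1}(Bv)\in\mathcal S(\mathbb R)$, hence $V^{-1}v\in\mathcal S(\mathbb R)$ and therefore $v\in\mathcal S(\mathbb R)$; the reverse implication is symmetric. Taking transposes in $B=V\tilde A V^{-1}$ yields $(\tilde A)'=V'B'(V')^{-1}$, and since $V'$ is again a topological automorphism of both $\mathcal S(\mathbb R)$ and $\mathcal S'(\mathbb R)$, the same reasoning gives \eqref{eqn:442}. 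The only substantive step is the identity $B=V\tilde A V^{-1}$, which I expect to handle as a finite combinatorial verification in $M,D$ subject to $[M,D]=\mathrm iI$, bypassing any appeal to the general metaplectic theorem.
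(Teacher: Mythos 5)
Your overall strategy is the same as the paper's: produce a conjugating operator between $\tilde A$ and $B$ that is simultaneously an automorphism of $\mathcal S(\mathbb R)$ and of $\mathcal S'(\mathbb R)$, and read the three equivalences off the conjugation identity. That final deduction (including the passage to transposes for \eqref{eqn:442}) is correct, and your explicit computation for the shear is right — in particular the constant shift $b_{00}=a_{00}-\mathrm i\theta a_{02}$, which is exactly what keeps the symbol in the normal form $b_{00}+\frac{\mathrm i}{2}b_{11}$ and confirms the exact Weyl covariance in this case. The paper obtains the conjugating operator in one line by citing \cite[Theorem 18.5.9]{Hormander} (metaplectic covariance of the Weyl calculus) for an \emph{arbitrary} symplectic map.

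The gap is in your first sentence. Lemma \ref{lem:11} does not say that the symplectic map relating the Weyl symbols of $\tilde A$ and $B$ can be taken to be the shear $\chi(x,\xi)=(x+\theta\xi,\xi)$; it only says that shears of this one type already produce elements of $\mathcal B$, so that $\mathcal B\ne\emptyset$. The set $\mathcal B$ is the full $SL(2,\mathbb R)$-orbit of the Weyl symbol of $\tilde A$ intersected with $\{b_{20}\ne 0\}$, and Proposition \ref{pro:49} is stated for an arbitrary $b\in\mathcal B$ — and this generality is genuinely used in the ``for all $b\in\mathcal B$'' direction (C) of Theorem \ref{thm:16}. Concretely, for $\tilde A=D^2$ the polynomial $(\xi+x)^2$ belongs to $\mathcal B$ but is not $\xi^2\circ\chi$ for any upper-triangular shear $\chi$, so your argument simply does not reach that $B$. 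The proof can be completed within your elementary framework by treating a generating set of $SL(2,\mathbb R)$ — your Fourier multiplier for one family of shears, multiplication by a chirp $\mathrm e^{\mathrm i\theta x^2/2}$ for the transposed shears (and, if convenient, the Fourier transform and dilations), each an explicit automorphism of $\mathcal S(\mathbb R)$ and $\mathcal S'(\mathbb R)$ — and composing; or simply by invoking the metaplectic theorem as the paper does.
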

\begin{proof}
Thanks to  \cite[Theorem 18.5.9]{Hormander}, there exists a unitary operator $U$ on $L^2(\mathbb R)$, which is an automorphism of
$\mathcal S(\mathbb R)$ and  $\mathcal S'(\mathbb R)$, such that
$B=U^{-1}\tilde A U$. Since the dual is globally regular if and only if the formal adjoint is globally regular, this implies the result.
\end{proof}

\noindent\textbf{Proof of Theorem \ref{thm:21}}
\eqref{itm:17}$\implies$\eqref{itm:18}: follows from Proposition \ref{pro:22}.

\noindent\eqref{itm:18}$\implies$\eqref{itm:19}: follows from Propositions \ref{pro:53}, and \ref{pro:49}.

\noindent\eqref{itm:19}$\implies$\eqref{itm:20}: obvious.

\noindent\eqref{itm:20}$\implies$\eqref{itm:17}: follows from Theorem \ref{thm:19}.
\qed

\subsection{Proof of Theorem \ref{thm:16}}

\subsubsection{Asymptotic behavior of the general solution to equation $Bu=0$}

Consider the operator $B$ given by \eqref{eqn:800} with $b_{20}\ne0$.

Define
\begin{equation}\label{eqn:465}
\Sigma_\pm(x)=\begin{cases}\displaystyle
-\frac 14\left\{\frac {b_{11}}{b_{20}}x^2+2\frac {b_{10}}{b_{20}}x\pm
\sigma\!\left(\frac {\Delta_2}{b_{20}^2}\right)\left(\frac {\Delta_2}{b_{20}^2}\right)^{\!\!\frac 12}
x^2\left(1+\frac {\Delta_1}{2\Delta_2x}\right)^{\!\!2}\right\},
&\text{\upshape if $\Delta_2\ne 0$},
\\[10pt] \displaystyle
-\frac 14\left\{\frac { b_{11}}{b_{20}}\,x^2+2\frac {b_{10}}{b_{20}}\,x
\pm\frac 43 \sigma\!\left(\frac{\Delta_1}{b_{20}^2}\right)\left(\frac{\Delta_1}{b_{20}^2}\right)^{\!\!\frac 12}
x^{\frac 32}\left(1+\frac{\Delta_0}{\Delta_1x}\right)^{\!\!\frac 32}\right\},
&\text{\upshape if $\Delta_2= 0\ne \Delta_1$},
\\[10pt] \displaystyle
-\frac 14\left\{\frac { b_{11}}{b_{20}}\,x^2+2\frac {b_{10}}{b_{20}}\,x
\pm 2\sigma\!\left(\frac {\Delta_0}{b_{20}^2}\right)\left(\frac {\Delta_0}{b_{20}^2}\right)^{\!\!\frac 12}x\right\},
&\text{\upshape if $\Delta_2=\Delta_1= 0$},\end{cases}
\end{equation}
where $\Delta_ 0$, $\Delta_ 1$, and $\Delta_ 2$ are given by \eqref{eqn:801}.

\paragraph{Assume $\Delta_2\ne 0$.}
The \emph{confluent hypergeometric function of the first kind,
of parameters $p\in\mathbb C$ and $q\in\mathbb C\setminus\mathbb Z_-$},
is the solution to the differential equation in the complex domain
\begin{equation*}
z u''+(q-z)u'-p u=0,
\end{equation*}
given by the entire analytic function (see \cite[(9.9.1)]{Lebedev})
\begin{equation}\label{eqn:362}
\Phi(p,q;z)=
\sum_{k=0}^{\infty}
\frac{(p)_k}{k!(q)_k}\,z^k,
\end{equation}
where
\begin{equation}\label{eqn:495}
(p)_k=\frac{\Gamma(p+k)}{\Gamma(p)}=\begin{cases}1,& \text{\upshape if $k=0$},\\ p(p+1)\cdots (p+k-1),&\text{\upshape if $k\ge 1$}.
\end{cases}
\end{equation}
and $\Gamma$ is the Euler Gamma Function.

\begin{proposition}\label{pro:56}
Consider  a complex number  $\lambda$. The  \emph{Hermite-Weber equation} (in the complex domain)
\begin{equation}\label{eqn:502}
w''(z)-(z^2-\lambda)w(z)=0
\end{equation}
has two linearly independent solutions given  by
\begin{equation}\label{eqn:520}
w_1(z)=\mathrm e^{-\frac 12\,z^2}\Phi\!\left(\frac{1-\lambda}4,\frac 12;z^2\right), \qquad
w_2(z)=\mathrm e^{-\frac 12\,z^2}z\Phi\!\left(\frac{3-\lambda}4,\frac 32;z^2\right).
\end{equation}
\end{proposition}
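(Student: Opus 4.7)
The plan is to reduce \eqref{eqn:502} to a confluent hypergeometric (Kummer) equation in two substitutions and then read off the two independent solutions from the standard theory.

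First, I would remove the quadratic term by setting $w(z)=\mathrm e^{-z^2/2}v(z)$. A short computation gives
\begin{equation*}
w''-(z^2-\lambda)w=\mathrm e^{-z^2/2}\bigl(v''-2zv'+(\lambda-1)v\bigr),
\end{equation*}
so $w$ solves \eqref{eqn:502} if and only if $v$ solves the Hermite equation $v''-2zv'+(\lambda-1)v=0$.

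Next, I would make the substitution $s=z^2$ and write $v(z)=u(s)$, noting $v'=2zu'$ and $v''=2u'+4su''$. Plugging in and dividing by $4$, one finds that $u$ satisfies
\begin{equation*}
su''+\left(\tfrac12-s\right)u'-\tfrac{1-\lambda}{4}\,u=0,
\end{equation*}
which is precisely Kummer's equation with parameters $p=(1-\lambda)/4$ and $q=1/2$. Since $q=1/2\notin\mathbb Z_-$, the series \eqref{eqn:362} defines an entire solution $u_1(s)=\Phi\!\left(\tfrac{1-\lambda}{4},\tfrac12;s\right)$. Because $q\notin\mathbb Z$, a second linearly independent solution of Kummer's equation is given by the classical formula $u_2(s)=s^{1-q}\Phi(p-q+1,2-q;s)$, which here specializes to $u_2(s)=s^{1/2}\Phi\!\left(\tfrac{3-\lambda}{4},\tfrac32;s\right)$. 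Substituting back $s=z^2$ and multiplying by $\mathrm e^{-z^2/2}$ yields the two candidates $w_1,w_2$ of \eqref{eqn:520}; note that writing $s^{1/2}=z$ gives an entire function in $z$, so both $w_j$ are entire, and a direct differentiation (using that $\Phi(p,3/2;0)=1$) verifies that $w_2$ indeed solves \eqref{eqn:502} on all of $\mathbb C$.

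Finally, I would check linear independence by evaluating the Wronskian at $z=0$. Using $\Phi(p,q;0)=1$ and the fact that $w_1$ is even while $w_2$ is odd near the origin, one gets $w_1(0)=1$, $w_1'(0)=0$, $w_2(0)=0$, $w_2'(0)=1$, so $W(w_1,w_2)(0)=1\neq0$. The only mild obstacle is handling the branch $s^{1/2}$: one must observe that the exponent $1-q=1/2$ combines with the entire function $\Phi(\cdot,3/2;z^2)$ to give a globally single-valued entire function $z\,\Phi\!\left(\tfrac{3-\lambda}{4},\tfrac32;z^2\right)$, which is why $w_2$ is a bona fide solution on $\mathbb C$ rather than merely on a slit plane.
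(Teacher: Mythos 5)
Your proposal is correct. The paper's own proof is shorter: it simply asserts that a direct computation shows $w_1,w_2$ solve \eqref{eqn:502}, and then computes the Wronskian at the origin exactly as you do, getting $\mathcal W(0)=\Phi\!\left(\frac{1-\lambda}4,\frac 12;0\right)\Phi\!\left(\frac{3-\lambda}4,\frac 32;0\right)=1$. Where you differ is in the first half: instead of verifying the given formulas, you \emph{derive} them by peeling off the Gaussian factor to reach the Hermite equation $v''-2zv'+(\lambda-1)v=0$ and then substituting $s=z^2$ to land on Kummer's equation $su''+\left(\tfrac 12-s\right)u'-\tfrac{1-\lambda}4 u=0$, reading off the standard pair $\Phi(p,q;s)$ and $s^{1-q}\Phi(p-q+1,2-q;s)$. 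Your computations check out (in particular $p-q+1=\frac{3-\lambda}4$ and $2-q=\frac 32$), and you are right to flag the only delicate point of this route, namely that $s^{1/2}$ recombines with the even entire function $\Phi\!\left(\frac{3-\lambda}4,\frac 32;z^2\right)$ into the single-valued entire function $z\,\Phi\!\left(\frac{3-\lambda}4,\frac 32;z^2\right)$, so that $w_2$ solves the equation on all of $\mathbb C$ and not just off a branch cut; the paper's direct verification sidesteps this issue entirely, at the cost of not explaining where the formulas come from. Both arguments are complete; yours buys motivation, the paper's buys brevity.
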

\begin{proof}
A straightforward computation shows that $w_1$ and $w_2$ given by \eqref{eqn:520} solve \eqref{eqn:502}.

Now we show that $w_1$ and $w_2$ are linearly independent. Since the Wronskian $\mathcal W$ of $w_1$ and $w_2$ is constant, it suffices to compute it at the origin, where
we have
\begin{equation*}
\mathcal W(0)=\begin{vmatrix}w_1(0) & w_2(0) \\ w_1'(0) & w_2'(0)\end{vmatrix}=
\Phi\!\left(\frac{1-\lambda}4,\frac 12;0\right)\Phi\!\left(\frac{3-\lambda}4,\frac 32;0\right)=1.   \qedhere
\end{equation*}
\end{proof}

\begin{proposition}\label{pro:55}
The equation $Bu=0$ has two  linearly independent analytic solutions $u_1$ and $u_2$ given by
\begin{equation}\label{eqn:188}
u_j(x)=\mathrm e^{-\frac{\mathrm i}{4b_{20}}\left(b_{11}x^2+2b_{10}x\right)}v_j(x),
\end{equation}
where $j\in\{1,2\}$,
\begin{equation}\label{eqn:187}
v_j(x)=w_j\!\left(\left(-\frac{\Delta_2}{4b_{20}^2}\right)^{\!\!\frac 14}\left(x+\frac{\Delta_1}{2\Delta_2}\right)\right),
\end{equation}
and $w_1$, and $w_2$ are given by \eqref{eqn:520}, with
\begin{equation*}
\lambda=\frac 18\left(-\frac {\Delta_2}{b_{20}^2}\right)^{\!\!-\frac 32}\frac {\Delta_1^2-4\Delta_2\Delta_0}{b_{20}^4}.
\end{equation*}
\end{proposition}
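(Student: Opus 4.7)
The plan is to reduce $Bu=0$ to the Hermite--Weber equation \eqref{eqn:502} by the classical two-step procedure: first eliminate the first-order derivative via an exponential gauge, then complete the square in the resulting potential and rescale. Proposition \ref{pro:56} then delivers the solutions.

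First, writing out $Bu=0$ explicitly gives
\begin{equation*}
-b_{20}u''-\mathrm i(b_{11}x+b_{10})u'+(b_{02}x^2+b_{01}x+b_{00})u=0.
\end{equation*}
Set $\phi(x)=-\frac{\mathrm i}{4b_{20}}(b_{11}x^2+2b_{10}x)$, so that $\phi'(x)=-\frac{\mathrm i}{2b_{20}}(b_{11}x+b_{10})$, and substitute $u=\mathrm e^{\phi}v$. A direct computation shows that the coefficient of $v'$ is $-2b_{20}\phi'-\mathrm i(b_{11}x+b_{10})$, which vanishes by construction. Collecting the remaining terms and using $(\phi')^2=-\frac{1}{4b_{20}^2}(b_{11}x+b_{10})^2$ and $\phi''=-\frac{\mathrm i b_{11}}{2b_{20}}$, the coefficient of $v$ reduces to $-\frac{1}{4b_{20}}(\Delta_2 x^2+\Delta_1 x+\Delta_0)$ with $\Delta_0,\Delta_1,\Delta_2$ as in \eqref{eqn:801}. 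Thus $v$ satisfies
\begin{equation*}
v''+\frac{1}{4b_{20}^2}(\Delta_2 x^2+\Delta_1 x+\Delta_0)\,v=0.
\end{equation*}

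Second, since $\Delta_2\ne 0$, complete the square:
\begin{equation*}
\Delta_2 x^2+\Delta_1 x+\Delta_0=\Delta_2\Bigl(x+\frac{\Delta_1}{2\Delta_2}\Bigr)^{\!\!2}-\frac{\Delta_1^2-4\Delta_2\Delta_0}{4\Delta_2}.
\end{equation*}
Introduce $y=\mu\bigl(x+\frac{\Delta_1}{2\Delta_2}\bigr)$ with $\mu=\bigl(-\frac{\Delta_2}{4b_{20}^2}\bigr)^{1/4}$, so that $\mu^4=-\frac{\Delta_2}{4b_{20}^2}$. Writing $v(x)=w(y)$, we get $v''=\mu^2 w''(y)$, and dividing the transformed equation through by $\mu^2$ yields
\begin{equation*}
w''(y)-\bigl(y^2-\lambda\bigr)w(y)=0,\qquad \lambda=\mu^2\,\frac{\Delta_1^2-4\Delta_2\Delta_0}{4\Delta_2^2},
\end{equation*}
where a short algebraic check, using $(-\Delta_2/b_{20}^2)^2=\Delta_2^2/b_{20}^4$, identifies this expression for $\lambda$ with the one stated in the proposition.

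Finally, Proposition \ref{pro:56} provides two linearly independent solutions $w_1,w_2$ of the Hermite--Weber equation, and pulling them back through $v_j(x)=w_j(y(x))$ and $u_j=\mathrm e^{\phi}v_j$ gives exactly \eqref{eqn:188}--\eqref{eqn:187}. Linear independence of $u_1,u_2$ follows at once from that of $w_1,w_2$ together with the fact that the substitution is an affine bijection and $\mathrm e^{\phi}$ never vanishes. There is no real obstacle here; the only mild subtlety is the choice of branches for $\mu=(-\Delta_2/(4b_{20}^2))^{1/4}$, which is handled consistently by the conventions on $z^\lambda$ fixed in the introduction, and the matching of the resulting $\lambda$ with the formula in the statement.
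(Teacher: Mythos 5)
Your proof is correct and follows essentially the same route as the paper: the gauge substitution $u=\mathrm e^{\phi}v$ reduces $Bu=0$ to $v''+\frac{1}{4b_{20}^2}(\Delta_2x^2+\Delta_1x+\Delta_0)v=0$ (the paper's equation \eqref{eqn:494}), and the affine rescaling $z=\bigl(-\Delta_2/(4b_{20}^2)\bigr)^{1/4}\bigl(x+\Delta_1/(2\Delta_2)\bigr)$ turns this into the Hermite--Weber equation \eqref{eqn:502}, after which Proposition \ref{pro:56} applies. Your identification of $\lambda$ with the stated formula and your remark on linear independence both check out.
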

\begin{proof}
Set
\begin{equation*}
v(x)=\mathrm e^{\frac{\mathrm i}{4b_{20}}\,\left(b_{11} x^2+2b_{10} x\right)}u(x).
\end{equation*}
A simple computation shows that $Bu=0$ if and only if
\begin{equation}\label{eqn:494}
v''(x)+\frac {1}{4b_{20}^2}\left(\Delta_2 x^2+\Delta_1 x+\Delta_0\right)v(x)=0.
\end{equation}
 Define
\begin{equation*}
w(z)=v\!\left(\left(-\frac{\Delta_2}{4b_{20}^2}\right)^{\!\!-\frac 14} z-\frac {\Delta_1}{2\Delta_2}\right).
\end{equation*}
Then $v$ satisfies equation \eqref{eqn:494} if and only if $w$ is a solution to equation \eqref{eqn:502}.

It follows that Proposition \ref{pro:55} is a consequence of Proposition \ref{pro:56}.
\end{proof}
\begin{proposition}\label{pro:50}
Let $u_1$ and $u_2$ be as in Proposition \ref{pro:55} and assume $\Arg \frac{\Delta_2}{b_{20}^2}\ne 0$.
For all $c_1,c_2\in\mathbb C$ we have
 the following asymptotic expansions, with $\Sigma_\pm$ defined by \eqref{eqn:465}.
\begin{Mylist}
\item
If
$\frac {c_1}{\Gamma\left(\frac {1-\lambda}4\right)}\pm \frac {c_2}{2\Gamma\left(\frac {3-\lambda}4\right)}\ne 0$, we have
\begin{align*}
&c_1u_1(x)+ c_2u_2(x)=\sqrt \pi \left(-\frac {\Delta_2}{4b_{20}^2}\right)^{-\frac {1+\lambda}8}\left(\frac {c_1}{\Gamma\!\left(\frac {1-\lambda}4\right)}+ \frac {c_2}{2\Gamma\!\left(\frac {3-\lambda}4\right)}\right)\mathrm e^{\mathrm i\Sigma_-(x)} \abs{x}^{-\frac {1+\lambda}2}
\left\{1+\mathcal O\left(\abs{x}^{-1}\right)\right\},
\\&\hspace{8.5cm}\text{\upshape for $x\to+\infty$},
\\
&c_1u_1(x)+ c_2u_2(x)=
\sqrt \pi \left(-\frac {\Delta_2}{4b_{20}^2}\right)^{-\frac {1+\lambda}8}\left(\frac {c_1}{\Gamma\!\left(\frac {1-\lambda}4\right)}- \frac {c_2}{2\Gamma\!\left(\frac {3-\lambda}4\right)}\right)
\mathrm e^{\mathrm i\Sigma_-(x)}\abs{x}^{-\frac {1+\lambda}2}
\left\{1+\mathcal O\left(\abs{x}^{-1}\right)\right\},
\\&\hspace{8.5cm}\text{\upshape for $x\to-\infty$}.
\end{align*}
\item
If $c_1=\frac {c}{\Gamma\!\left(\frac {3-\lambda}4\right)}$,
$c_2=-\frac {2c}{\Gamma\!\left(\frac {1-\lambda}4\right)}$,
with $c\ne 0$, and $\lambda\notin\{1+2n:n\in\mathbb Z_+\}$,
  we have
\begin{align*}
&c_1u_1(x)+ c_2u_2(x)=
\frac {c}{\sqrt \pi}\left(-\frac {\Delta_2}{4b_{20}^2}\right)^{-\frac {1-\lambda}8}\mathrm e^{\mathrm i\Sigma_+(x)}  \abs{x}^{-\frac {1-\lambda}2}
\left\{1+\mathcal O\left(\abs{x}^{-1}\right)\right\},\qquad
\text{\upshape for $x\to+\infty$},
\\
&c_1u_1(x)+ c_2u_2(x)=
\frac {2\sqrt \pi c}{\Gamma\!\left(\frac {1-\lambda}4\right)\Gamma\!\left(\frac {3-\lambda}4\right)}
\left(-\frac {\Delta_2}{4b_{20}^2}\right)^{-\frac {1+\lambda}8}
\mathrm e^{\mathrm i\Sigma_-(x)}\abs{x}^{-\frac {1+\lambda}2}
\left\{1+\mathcal O\left(\abs{x}^{-1}\right)\right\},
\\&\hspace{8.5cm}\text{\upshape for $x\to-\infty$},
\end{align*}
\item
If $c_1=\frac {c}{\Gamma\!\left(\frac {3-\lambda}4\right)}$,
$c_2=\frac {2c}{\Gamma\!\left(\frac {1-\lambda}4\right)}$,
with $c\ne 0$, and  $\lambda\notin\{1+2n:n\in\mathbb Z_+\}$,  we have
\begin{align*}
&c_1u_1(x)+ c_2u_2(x)=
\frac {2\sqrt \pi c}{\Gamma\!\left(\frac {1-\lambda}4\right)\Gamma\!\left(\frac {3-\lambda}4\right)}
\left(-\frac {\Delta_2}{4b_{20}^2}\right)^{-\frac {1+\lambda}8}
\mathrm e^{\mathrm i\Sigma_-(x)} \abs{x}^{-\frac {1+\lambda}2}
\left\{1+\mathcal O\left(\abs{x}^{-1}\right)\right\},
\\&\hspace{8.5cm}\text{\upshape for $x\to+\infty$},
\\
&c_1u_1(x)+ c_2u_2(x)=
\frac {c}{\sqrt \pi}
\left(-\frac {\Delta_2}{4b_{20}^2}\right)^{-\frac {1-\lambda}8}
\mathrm e^{\mathrm i\Sigma_+(x)} \abs{x}^{-\frac {1-\lambda}2}
\left\{1+\mathcal O\left(\abs{x}^{-1}\right)\right\},
\\&\hspace{8.5cm}\text{\upshape for $x\to-\infty$}.
\end{align*}
\item
If  $c_1=\frac {c}{\Gamma\!\left(\frac {3-\lambda}4\right)}$,
$c_2=\mp\frac {2c}{\Gamma\!\left(\frac {1-\lambda}4\right)}$,
with $c\ne 0$, and
$\lambda=1+4n$, with $n\in\mathbb Z_+$, we have
\begin{equation*}
c_1u_1(x)+ c_2u_2(x)=\frac {c}{\sqrt\pi}
\left(-\frac {\Delta_2}{4b_{20}^2}\right)^{\frac n2}
\mathrm e^{\mathrm i\Sigma_+(x)}x^{2n}
\left\{1+\mathcal O\left(\abs{x}^{-1}\right)\right\},\qquad\text{\upshape for  $\abs{x}\to \infty$}.
\end{equation*}
\item
If $c_1=\frac {c}{\Gamma\!\left(\frac {3-\lambda}4\right)}$,
$c_2=\mp\frac {2c}{\Gamma\!\left(\frac {1-\lambda}4\right)}$,
with $c\ne 0$, and $\lambda=3+4n$, with $n\in\mathbb Z_+$,
we have
\begin{equation*}
c_1u_1(x)+ c_2u_2(x)=\pm\frac {c}{\sqrt\pi}
\left(-\frac {\Delta_2}{4b_{20}^2}\right)^{\frac 14+\frac n2}
\mathrm e^{\mathrm i\Sigma_+(x)}x^{2n+1}\left\{1+\mathcal O\left(\abs{x}^{-1}\right)\right\},\qquad\text{\upshape for  $\abs{x}\to \infty$}.
\end{equation*}
\end{Mylist}
\end{proposition}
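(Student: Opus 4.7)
The plan is to substitute the asymptotic expansion of the confluent hypergeometric function $\Phi(a,c;z)$ (to be established with the required precision in Sections~\ref{sec:5}--\ref{sec:6}) into the representation of $u_1, u_2$ given by Proposition~\ref{pro:55}. In suitable sectors of the complex plane one has
$$\Phi(a,c;z)=\frac{\Gamma(c)}{\Gamma(c-a)}(-z)^{-a}\bigl(1+\mathcal{O}(|z|^{-1})\bigr)+\frac{\Gamma(c)}{\Gamma(a)}\mathrm{e}^z z^{a-c}\bigl(1+\mathcal{O}(|z|^{-1})\bigr);$$
applied with $(a,c)=((1-\lambda)/4,1/2)$ and $(a,c)=((3-\lambda)/4,3/2)$ at argument $\zeta(x)^2$, where $\zeta(x)=(-\Delta_2/(4b_{20}^2))^{1/4}(x+\Delta_1/(2\Delta_2))$, this decomposes each $w_j(\zeta)$ into a piece proportional to $\mathrm{e}^{-\zeta^2/2}\zeta^{(\lambda-1)/2}$ plus a piece proportional to $\mathrm{e}^{\zeta^2/2}\zeta^{-(\lambda+1)/2}$. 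The hypothesis $\Arg(\Delta_2/b_{20}^2)\ne 0$ ensures that $\Re(\zeta(x)^2)$ has a definite nonzero sign for large $|x|$, placing $\zeta(x)^2$ in a sector where the expansion is valid and determining which of the two terms is dominant.

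Next I would multiply by the prefactor $\mathrm{e}^{-\mathrm{i}(b_{11}x^2+2b_{10}x)/(4b_{20})}$ and unwind $\zeta^2$ using \eqref{eqn:465}: a direct computation matches the two exponentials $\mathrm{e}^{\mp\zeta^2/2}$ with $\mathrm{e}^{\mathrm{i}\Sigma_\pm(x)}$, and converts the powers $|\zeta|^{\pm(\lambda\pm 1)/2}$ into $|x|^{\pm(\lambda\pm 1)/2}$ multiplied by an appropriate power of $(-\Delta_2/(4b_{20}^2))^{1/4}$, yielding the constants displayed in statement~(A). The asymmetry between $x\to+\infty$ and $x\to-\infty$ arises from two sources: the extra factor $\zeta$ in $w_2$ changes sign between the two half-axes, producing the two combinations $c_1/\Gamma((1-\lambda)/4)\pm c_2/(2\Gamma((3-\lambda)/4))$; and the branch of $(-z)^{-a}$ in the hypergeometric expansion switches between the two sides, contributing the relative phase.

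Cases (B) and (C) correspond exactly to those choices of $c_1,c_2$ that make the coefficient of the dominant $\mathrm{e}^{\mathrm{i}\Sigma_-}|x|^{-(1+\lambda)/2}$ term vanish at $+\infty$ or at $-\infty$ respectively; a short Gamma-identity verifies the two specified choices do this. What remains is the subdominant $\mathrm{e}^{\mathrm{i}\Sigma_+}$ contribution on that side together with the still-nonzero $\mathrm{e}^{\mathrm{i}\Sigma_-}$ term on the opposite side, whose coefficient evaluates to $2c\sqrt{\pi}/(\Gamma((1-\lambda)/4)\Gamma((3-\lambda)/4))$. Cases (D) and (E) are the exceptional values $\lambda\in 1+2\mathbb{Z}_+$, where one of the Gamma factors in the denominator has a pole: the series defining $w_1$ (when $\lambda=1+4n$) or $w_2$ (when $\lambda=3+4n$) then truncates to an even or odd Hermite polynomial of degree $2n$ or $2n+1$ in $\zeta$, so $c_1u_1+c_2u_2$ collapses uniformly to $\mathrm{e}^{\mathrm{i}\Sigma_+(x)}\,x^{k}(1+\mathcal{O}(|x|^{-1}))$ with $k=2n$ or $2n+1$, and the sign $\pm$ in (E) reflects the parity of the polynomial.

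The main technical obstacle will be the careful bookkeeping of branches: the fourth-root $(-\Delta_2/(4b_{20}^2))^{1/4}$, the factors $(-\zeta^2)^{-a}$ and $(\zeta^2)^{a-c}$ in the hypergeometric expansion, and the square root $(\Delta_2/b_{20}^2)^{1/2}$ in the definition \eqref{eqn:465} of $\Sigma_\pm$, must all be chosen consistently so that the identity $\mathrm{i}\Sigma_\pm(x)=\mp\zeta(x)^2/2+(\text{linear in }x)$ holds with the correct sign of the imaginary part for $x\to\pm\infty$ on each side. Once this bookkeeping is in place, the explicit constants in (A)--(E) follow from $\Gamma(1/2)=\sqrt{\pi}$ and $\Gamma(3/2)=\sqrt{\pi}/2$, while the validity of the asymptotic expansion of $\Phi$ in the correct sectors (guaranteed by $\Arg(\Delta_2/b_{20}^2)\ne 0$) provides uniform control of the $\mathcal{O}(|x|^{-1})$ error terms.
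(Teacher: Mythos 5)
Your overall strategy --- reduce to the Hermite--Weber solutions $w_1,w_2$ via Proposition \ref{pro:55}, use $\Arg\frac{\Delta_2}{b_{20}^2}\ne 0$ to place $\zeta(x)^2$ strictly inside the sector $\abs{\Arg(\zeta^2)}<\frac{\pi}{2}$, and match $\mp\frac12\zeta^2$ plus the Gaussian prefactor with $\mathrm i\Sigma_\pm(x)$ --- coincides with the paper's, and it does carry cases (A), (D) and (E): in (D), (E) one of the reciprocal Gamma factors vanishes, the surviving $\Phi$ truncates, and the combination is exactly a Hermite function, as you say.

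The gap is in cases (B) and (C). You propose to obtain the subdominant $\mathrm e^{\mathrm i\Sigma_+}$ behavior by observing that the special choice of $c_1,c_2$ cancels the coefficient of the dominant $\mathrm e^{\mathrm i\Sigma_-}$ term in the two-term expansion of $\Phi$. But under the hypothesis $\Arg\frac{\Delta_2}{b_{20}^2}\ne 0$ one has $\Re\bigl(\zeta(x)^2\bigr)\to+\infty$, so the branch $\mathrm e^{z}z^{a-c}$ is exponentially dominant there; the remainder attached to it, of size $\mathrm e^{\Re z}\abs{z}^{\Re(a-c)-1}$, does not cancel when the leading coefficients of $c_1w_1$ and $c_2w_2$ do, and it remains exponentially larger than the recessive branch $(-z)^{-a}$. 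Cancelling leading terms of Poincar\'e-type expansions therefore cannot show that the exact solution is exponentially small with the stated constant; this is the classical difficulty with recessive solutions and exponentially small terms, and it is precisely why the two-term ``compound'' expansion of $\Phi$ only carries information about the dominant branch in this sector. The paper circumvents it by introducing the function $\Theta(p;z)$ of \eqref{eqn:620}, which by identity \eqref{eqn:803} is exactly the recessive combination $\frac{w_1}{\Gamma\left(\frac{3-\lambda}{4}\right)}\mp\frac{2w_2}{\Gamma\left(\frac{1-\lambda}{4}\right)}$ up to the factor $\mathrm e^{-\frac12 z^2}/\sqrt\pi$, and by deriving its purely algebraic asymptotics $z^{-2p}$ directly from the integral representation \eqref{eqn:614} (Theorem \ref{thm:30}); on the opposite half-line the same combination is generic and is handled by case (A). You need this separate representation of the recessive solution (or an equivalent exponentially improved expansion with a genuinely controlled remainder) to complete (B) and (C); the branch bookkeeping you flag as the main obstacle is real but secondary.
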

\begin{proof}
Set
\begin{equation*}
z=\left(-\frac {\Delta_2}{4b_{20}^2}\right)^{\!\!\frac 14}\left(x+\frac {\Delta_1}{2\Delta_2}\right).
\end{equation*}
From \eqref{eqn:520}, \eqref{eqn:188}, and \eqref{eqn:187}, it follows that
\begin{equation}\label{eqn:96}
c_1u_1(x)+c_2 u_2(x)=\mathrm e^{-\frac {\mathrm i}4\left(\frac {b_{11}}{b_{20}}x^2+2\frac {b_{10}}{b_{20}}x\right)}
\Bigl(c_1 w_1(z)+c_2w_2(z)\Bigr).
\end{equation}
On the other side, since
\begin{multline}\label{eqn:817}
\left(-\frac {\Delta_2}{b_{20}^2}\right)^{\!\!\frac 12}=
\abs{\frac {\Delta_2}{b_{20}^2}}^{\frac 12}\mathrm e^{\frac {\mathrm i}2\Arg\left(-\frac {\Delta_2}{b_{20}^2}\right)}=
\abs{\frac {\Delta_2}{b_{20}^2}}^{\frac 12}
\mathrm e^{\frac {\mathrm i}2\left(\Arg \frac {\Delta_2}{b_{20}^2}+\sigma\!\left(\frac {\Delta_2}{b_{20}^2}\right)\pi\right)}
=\\=
\mathrm e^{\frac {\mathrm i}2 \sigma\!\left(\frac {\Delta_2}{b_{20}^2}\right)\pi}\left(\frac {\Delta_2}{b_{20}^2}\right)^{\!\!\frac 12}
=\mathrm i\sigma\!\left(\frac {\Delta_2}{b_{20}^2}\right)\left(\frac {\Delta_2}{b_{20}^2}\right)^{\!\!\frac 12},
\end{multline}
we have
\begin{equation}\label{eqn:97}
-\frac {\mathrm i}4\left(\frac {b_{11}}{b_{20}}x^2+2\frac {b_{10}}{b_{20}}x\right)\pm \frac 12\, z^2
=-\frac {\mathrm i}4\left(\frac {b_{11}}{b_{20}}x^2+2\frac {b_{10}}{b_{20}}x\right)
\pm \frac 12\left(-\frac {\Delta_2}{4b_{20}^2}\right)^{\!\!\frac 12}
\left(x+\frac {\Delta_1}{2\Delta_2}\right)^{\!\!2}
=\mathrm i \Sigma_{\mp}(x).
\end{equation}
Moreover, since \begin{equation*}
\abs{\Arg\left(\left(-\frac {\Delta_2}{b_{20}^2}\right)^{\!\!\frac 14}\right)}<\frac \pi 4,
\end{equation*}
 and
\begin{equation*}
\lim_{\abs{x}\to\infty}\Arg\left(1+\frac {\Delta_1}{2\Delta_2 x}\right)=0,
\end{equation*}
there exists $0<\epsilon<\frac \pi 4$, such that
\begin{equation}\label{eqn:99}
\abs{\Arg (\pm z)}\le \frac \pi 4-\epsilon,\qquad\text{\upshape for $x\to\pm \infty$}.
\end{equation}

In particular
\begin{equation}\label{eqn:98}
\pm z=\left(-\frac {\Delta_2}{4b_{20}^2}\right)^{\!\!\frac 14}\abs{x}\left(1+\mathcal O\left(\abs{x}^{-1}\right)\right),
\qquad \text{\upshape for $x\to\pm\infty$}.
\end{equation}

 In conclusion the statement follows from \eqref{eqn:96}, \eqref{eqn:97}, \eqref{eqn:99}, \eqref{eqn:98}, and Proposition \ref{pro:59}.
\end{proof}

\begin{proposition}\label{pro:61}
Let $u_1$ and $u_2$ be as in Proposition \ref{pro:55} and assume $\Arg \frac {\Delta_2}{b_{20}^2}= 0$.
For all $c_1,c_2\in\mathbb C$ we have
 the following asymptotic expansions.
\begin{Mylist}
\item\label{itm:46}
If $\left(\frac {\mathrm i c_1}{\Gamma\left(\frac {1+\lambda}4\right)}\mp \frac {c_2}{2\Gamma\left(\frac {3+\lambda}4\right)}\right)
\left(\frac {c_1}{\Gamma\left(\frac {1-\lambda}4\right)}\pm \frac {c_2}{2\Gamma\left(\frac {3-\lambda}4\right)}\right)\ne 0$, we have
\begin{align*}
&c_1u_1(x)+ c_2u_2(x)=
\sqrt \pi \mathrm e ^{-\mathrm i \frac {1+\lambda}{4} \pi}
\left(-\frac {\Delta_2}{4b_{20}^2}\right)^{-\frac {1-\lambda}8}
\left(\frac {\mathrm i c_1}{\Gamma\!\left(\frac {1+\lambda}4\right)}- \frac {c_2}{2\Gamma\!\left(\frac {3+\lambda}4\right)}\right)
\mathrm e^{\mathrm i\Sigma_+(x)} \abs{x}^{-\frac {1-\lambda}2}\left\{1+\mathcal O\left(\abs{x}^{-1}\right)\right\}+
\\
&\quad+
\sqrt \pi
\left(-\frac {\Delta_2}{4b_{20}^2}\right)^{-\frac {1+\lambda}8}
\left(\frac {c_1}{\Gamma\!\left(\frac {1-\lambda}4\right)}+ \frac {c_2}{2\Gamma\!\left(\frac {3-\lambda}4\right)}\right)
\mathrm e^{\mathrm i\Sigma_-(x)}\abs{x}^{-\frac {1+\lambda}2}\left\{1+\mathcal O\left(\abs{x}^{-1}\right)\right\},
\\ &\hspace{8cm}\text{\upshape for $x\to+\infty$},
\\ &c_1u_1(x)+ c_2u_2(x)=
\sqrt \pi \mathrm e ^{-\mathrm i \frac {1+\lambda}{4} \pi}
\left(-\frac {\Delta_2}{4b_{20}^2}\right)^{-\frac {1-\lambda}8}
\left(\frac {\mathrm i c_1}{\Gamma\!\left(\frac {1+\lambda}4\right)}+ \frac {c_2}{2\Gamma\!\left(\frac {3+\lambda}4\right)}\right)
\mathrm e^{\mathrm i\Sigma_+(x)}\abs{x}^{-\frac {1-\lambda}2}
\left\{1+\mathcal O\left(\abs{x}^{-1}\right)\right\}
\\&\quad
+\sqrt \pi \left(-\frac {\Delta_2}{4b_{20}^2}\right)^{-\frac {1+\lambda}8}
\left(\frac {c_1}{\Gamma\!\left(\frac {1-\lambda}4\right)}- \frac {c_2}{2\Gamma\!\left(\frac {3-\lambda}4\right)}\right)
\mathrm e^{\mathrm i\Sigma_-(x)}\abs{x}^{-\frac {1+\lambda}2}\left\{1+
\mathcal O\left(\abs{x}^{-1}\right)\right\},
\\ &\hspace{8cm}\text{\upshape for $x\to-\infty$},
\end{align*}
\item
If $c_1=\frac {c}{\Gamma\!\left(\frac {3+\lambda}4\right)}$,
$c_2=\frac {2\mathrm i c}{\Gamma\!\left(\frac {1+\lambda}4\right)}$,
with $c\ne 0$, and $\lambda\notin\{-(1+2n):n\in\mathbb Z_+\}$ we have
\begin{align*}
&c_1u_1(x)+ c_2u_2(x)=
\frac {c}{\sqrt \pi}\,
\mathrm e^{\mathrm i\Sigma_-(x)}
\mathrm e^{\mathrm i\frac {1+\lambda}{4}\pi}
\left(-\frac {\Delta_2}{4b_{20}^2}\right)^{-\frac {1+\lambda}8}
\abs{x}^{-\frac {1+\lambda}2}
\left\{1+\mathcal O\left(\abs{x}^{-1}\right)\right\},
\\ &\hspace{8cm}\text{\upshape for $x\to+\infty$},
\\ &c_1u_1(x)+ c_2u_2(x)=
\frac {\mathrm 2 \mathrm i\sqrt \pi \mathrm e ^{-\mathrm i \frac {1+\lambda}{4} \pi}
 c}{\Gamma\!\left(\frac {1+\lambda}4\right)\Gamma\!\left(\frac {3+\lambda}4\right)}
\left(-\frac {\Delta_2}{4b_{20}^2}\right)^{-\frac {1-\lambda}8}
\mathrm e^{\mathrm i\Sigma_+(x)}\abs{x}^{-\frac {1-\lambda}2}
\left\{1+\mathcal O\left(\abs{x}^{-1}\right)\right\}+
\\&\quad
+\frac {c\,\mathrm e^{-\mathrm i \frac {1+\lambda}{4}\pi}}{\sqrt \pi}
\left(-\frac {\Delta_2}{4b_{20}^2}\right)^{-\frac {1+\lambda}8}
\mathrm e^{\mathrm i\Sigma_-(x)}\abs{x}^{-\frac {1+\lambda}2}
\left\{1+\mathcal O\left(\abs{x}^{-1}\right)\right\},
\qquad\text{\upshape for $x\to-\infty$}.
\end{align*}
\item
If $c_1=\frac {c}{\Gamma\!\left(\frac {3+\lambda}4\right)}$,
$c_2=-\frac {2\mathrm i c}{\Gamma\!\left(\frac {1+\lambda}4\right)}$,
with $c\ne 0$, and $\lambda\notin\{-(1+2n):n\in\mathbb Z_+\}$, we have
\begin{align*}
&c_1u_1(x)+ c_2u_2(x)=
\frac {2\mathrm i \sqrt \pi \mathrm e ^{-\mathrm i \frac {1+\lambda}{4} \pi} c}
{\Gamma\!\left(\frac {1+\lambda}4\right)\Gamma\!\left(\frac {3+\lambda}4\right)}\,
\left(-\frac {\Delta_2}{4b_{20}^2}\right)^{-\frac {1-\lambda}8}
\mathrm e^{\mathrm i\Sigma_+(x)} \abs{x}^{-\frac {1-\lambda}2}
\left\{1+\mathcal O\left(\abs{x}^{-1}\right)\right\}+
\\&\quad
+\frac {c\,\mathrm e^{-\mathrm i\frac {1+\lambda}{4}\pi}}{\sqrt \pi}
\left(-\frac {\Delta_2}{4b_{20}^2}\right)^{-\frac {1+\lambda}8}
\mathrm e^{\mathrm i\Sigma_-(x)}\abs{x}^{-\frac {1+\lambda}2}
\left\{1+\mathcal O\left(\abs{x}^{-1}\right)\right\},
\qquad\text{\upshape for $x\to+\infty$},
\\ &c_1u_1(x)+ c_2u_2(x)=
\frac {c}{\sqrt \pi}\mathrm e^{\mathrm i\frac {1+\lambda}{4}\pi}
\left(-\frac {\Delta_2}{4b_{20}^2}\right)^{-\frac {1+\lambda}8}
\mathrm e^{\mathrm i\Sigma_-(x)}\abs{x}^{-\frac {1+\lambda}2}
\left\{1+\mathcal O\left(\abs{x}^{-1}\right)\right\},
\qquad\text{\upshape for $x\to-\infty$}.
\end{align*}
\item
If $c_1=\frac {c}{\Gamma\!\left(\frac {3+\lambda}4\right)}$,
$c_2=\pm\frac {2\mathrm i c}{\Gamma\!\left(\frac {1+\lambda}4\right)}$,
with $c\ne 0$, and   $\lambda=-(1+4n)$, with $n\in\mathbb Z_+$,
  we have
\begin{equation*}
c_1u_1(x)+c_2u_2(x)=
\frac {(-1)^n c}{\sqrt \pi}
\left(-\frac {\Delta_2}{4b_{20}^2}\right)^{\frac n2}
\mathrm e^{\mathrm i\Sigma_-(x)}x^{2n}\left\{1+\mathcal O\left(\abs{x}^{-1}\right)\right\},
\qquad\text{\upshape for  $\abs{x}\to\infty$}.
\end{equation*}
\item
If $c_1=\frac {c}{\Gamma\left(\frac {3+\lambda}4\right)}$,
$c_2=\pm\frac {2\mathrm i c}{\Gamma\!\left(\frac {1+\lambda}4\right)}$,
with $c\ne 0$, and    $\lambda=-(3+4n)$, with $n\in\mathbb Z_+$,
  we have
\begin{equation*}
c_1u_1(x)+c_2u_2(x)=
\mp\mathrm i \,\frac {(-1)^n c}{\sqrt \pi}
\left(-\frac {\Delta_2}{4b_{20}^2}\right)^{\frac 14+\frac n2}\mathrm e^{\mathrm i\Sigma_-(x)}
 x^{2n+1}\left\{1+\mathcal O\left(\abs{x}^{-1}\right)\right\},
\qquad\text{\upshape for  $\abs{x}\to\infty$}.
\end{equation*}
\item
If $c_1=\frac {c}{\Gamma\!\left(\frac {3-\lambda}4\right)}$,
$c_2=-\frac {2c}{\Gamma\!\left(\frac {1-\lambda}4\right)}$,
with $c\ne 0$, and  $\lambda\notin\{1+2n:n\in\mathbb Z_+\}$,  we have
\begin{align*}
&c_1u_1(x)+ c_2u_2(x)=
\frac {c}{\sqrt \pi}
\left(-\frac {\Delta_2}{4b_{20}^2}\right)^{-\frac {1-\lambda}8}
\mathrm e^{\mathrm i\Sigma_+(x)} \abs{x}^{-\frac {1-\lambda}{2}}
\left\{1+\mathcal O\left(\abs{x}^{-1}\right)\right\},
\qquad\text{\upshape for $x\to+\infty$},
\\ &c_1u_1(x)+ c_2u_2(x)=
-\frac {c \mathrm e^{-\mathrm i\frac {1+\lambda}{2}\pi}}{\sqrt \pi}
\left(-\frac {\Delta_2}{4b_{20}^2}\right)^{-\frac {1-\lambda}8}
\mathrm e^{\mathrm i\Sigma_+(x)}\abs{x}^{-\frac {1-\lambda}2}
\left\{1+\mathcal O\left(\abs{x}^{-1}\right)\right\}+
\\&\quad
+\frac {2\sqrt \pi c}{\Gamma\!\left(\frac {1-\lambda}4\right)\Gamma\!\left(\frac {3-\lambda}4\right)}\,
\left(-\frac {\Delta_2}{4b_{20}^2}\right)^{-\frac {1+\lambda}8}
\mathrm e^{\mathrm i\Sigma_-(x)}\abs{x}^{-\frac {1+\lambda}2}
\left\{1+\mathcal O\left(\abs{x}^{-1}\right)\right\},
\qquad\text{\upshape for $x\to-\infty$},
\end{align*}
\item
If $c_1=\frac {c}{\Gamma\!\left(\frac {3-\lambda}4\right)}$,
$c_2=\frac {2c}{\Gamma\!\left(\frac {1-\lambda}4\right)}$,
with $c\ne 0$, and  $\lambda\notin\{1+2n:n\in\mathbb Z_+\}$,
  we have
\begin{align*}
&c_1u_1(x)+ c_2u_2(x)=
-\frac {c\,\mathrm e^{-\mathrm i\frac {1+\lambda}{2}\pi}}{\sqrt \pi}
\left(-\frac {\Delta_2}{4b_{20}^2}\right)^{-\frac {1-\lambda}8}
\mathrm e^{\mathrm i\Sigma_+(x)} \abs{x}^{-\frac {1-\lambda}2}
\left\{1+\mathcal O\left(\abs{x}^{-1}\right)\right\}+
\\&\quad+
\frac {2\sqrt \pi  c}{\Gamma\!\left(\frac {1-\lambda}4\right)\Gamma\!\left(\frac {3-\lambda}4\right)}\,
\left(-\frac {\Delta_2}{4b_{20}^2}\right)^{-\frac {1+\lambda}8}
\mathrm e^{\mathrm i\Sigma_-(x)}\abs{x}^{-\frac {1+\lambda}2}
\left\{1+\mathcal O\left(\abs{x}^{-1}\right)\right\},
\qquad\text{\upshape for $x\to+\infty$},
\\ &c_1u_1(x)+ c_2u_2(x)=
\frac {c}{\sqrt \pi}
\left(-\frac {\Delta_2}{4b_{20}^2}\right)^{-\frac {1-\lambda}8}
\mathrm e^{\mathrm i\Sigma_+(x)}\abs{x}^{-\frac {1-\lambda}2}
\left\{1+\mathcal O\left(\abs{x}^{-1}\right)\right\},
\qquad\text{\upshape for $x\to-\infty$},
\end{align*}
\item
If $c_1=\frac {c}{\Gamma\!\left(\frac {3-\lambda}4\right)}$,
$c_2=\mp\frac {2c}{\Gamma\!\left(\frac {1-\lambda}4\right)}$,
with $c\ne 0$, and $\lambda=1+4n$, with $n\in\mathbb Z_+$,
we have
\begin{equation*}
c_1u_1(x)+ c_2u_2(x)=\frac {c}{\sqrt\pi}
\left(-\frac {\Delta_2}{4b_{20}^2}\right)^{\frac n2}\mathrm e^{\mathrm i \Sigma_+(x)}x^{2n}
\left\{1+\mathcal O\left(\abs{x}^{-1}\right)\right\},
\qquad\text{\upshape for  $\abs{x}\to\infty$}.
\end{equation*}
\item
If $c_1=\frac {c}{\Gamma\!\left(\frac {3-\lambda}4\right)}$,
$c_2=\mp\frac {2c}{\Gamma\!\left(\frac {1-\lambda}4\right)}$,
with $c\ne 0$, and  $\lambda=3+4n$, with $n\in\mathbb Z_+$,
we have
\begin{equation*}
c_1u_1(x)+ c_2u_2(x)=\pm\frac {c}{\sqrt\pi}\left(-\frac {\Delta_2}{4b_{20}^2}\right)^{\frac 14+\frac n2}
\mathrm e^{\mathrm i \Sigma_+(x)}x^{2n+1}\left\{1+\mathcal O\left(\abs{x}^{-1}\right)\right\},
\qquad\text{\upshape for  $\abs{x}\to\infty$}.
\end{equation*}
\end{Mylist}
\end{proposition}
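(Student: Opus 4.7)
\textbf{Proof plan for Proposition \ref{pro:61}.} The overall strategy parallels that of Proposition \ref{pro:50}. I would start from the same identity \eqref{eqn:96} expressing $c_1 u_1(x)+c_2 u_2(x)$ in terms of $c_1 w_1(z)+c_2 w_2(z)$ with $z=\left(-\Delta_2/(4b_{20}^2)\right)^{1/4}\!\left(x+\Delta_1/(2\Delta_2)\right)$, and the same computation \eqref{eqn:97} that converts the exponential prefactors into $\mathrm e^{\mathrm i \Sigma_\mp(x)}$. The entire asymptotic analysis thereby reduces once again to expanding $c_1 w_1(z)+c_2 w_2(z)$ on each ray $x\to\pm\infty$.

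The essential new feature is that the hypothesis $\Arg(\Delta_2/b_{20}^2)=0$ forces, via the computation in \eqref{eqn:817}, $\Arg\bigl((-\Delta_2/b_{20}^2)^{1/4}\bigr)=\pm\pi/4$. Consequently $\Arg(\pm z)\to\pm\pi/4$ as $x\to\pm\infty$, which is precisely the Stokes ray for the Hermite--Weber equation \eqref{eqn:502}; this is the sector excluded by the strict bound \eqref{eqn:99} used in Proposition \ref{pro:50}. On such rays the two fundamental exponentials $\mathrm e^{\pm z^2/2}$ have comparable magnitude, so the asymptotic of $c_1 w_1(z)+c_2 w_2(z)$ retains \emph{both} leading terms. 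I would thus invoke the companion expansion to Proposition \ref{pro:59} valid on the boundary rays $\Arg z=\pm\pi/4$: it provides $c_1 w_1(z)+c_2 w_2(z)$ as a sum of a $z^{-(1+\lambda)/2}\mathrm e^{-z^2/2}$ piece and a $z^{-(1-\lambda)/2}\mathrm e^{z^2/2}$ piece, whose coefficients are linear in $c_1,c_2$ with factors built from $1/\Gamma((1\pm\lambda)/4)$ and $1/\Gamma((3\pm\lambda)/4)$. Feeding this back through \eqref{eqn:96}--\eqref{eqn:97} gives directly the two-exponential formula of case \eqref{itm:46}; note that the relevant linear combinations change between $x\to+\infty$ and $x\to-\infty$ because the sign of $\sigma(\Delta_2/b_{20}^2)$ (hence of $\Arg z$) flips, which is what produces the different coefficient pairs on the two sides.

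Cases (B)--(I) are obtained by imposing a linear relation on $(c_1,c_2)$ that annihilates one of the two leading coefficients in (A). For generic $\lambda$, killing the $\Sigma_+$ coefficient on one side pins $(c_1,c_2)$ up to a scalar $c$ via the factors $1/\Gamma((3+\lambda)/4)$ and $1/\Gamma((1+\lambda)/4)$, while killing the $\Sigma_-$ coefficient involves $1/\Gamma((3-\lambda)/4)$ and $1/\Gamma((1-\lambda)/4)$; the asymptotic on the opposite ray is then a straightforward substitution and retains in general both exponentials. This yields cases (B), (C), (F), (G). When $\lambda\in\{-(1+2n):n\in\mathbb Z_+\}$ or $\lambda\in\{1+2n:n\in\mathbb Z_+\}$, the relevant reciprocal Gamma factor vanishes because of the simple poles of $\Gamma$ at non-positive integers, so the generic two-term expansion collapses on both sides to a single-exponential expression, and the series defining $w_1$ or $w_2$ terminates and produces a polynomial factor of degree $2n$ or $2n+1$. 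Matching these polynomial normalizations with the generic formulas, using the standard identities $(1-\lambda)/4=-n$ or $(3-\lambda)/4=-n$ and their $\lambda\mapsto-\lambda$ analogues, gives cases (D), (E), (H), (I) together with the explicit signs $\pm$ and the powers of $\left(-\Delta_2/(4b_{20}^2)\right)^{1/4}$.

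The principal obstacle is the Stokes-ray expansion itself: on $\Arg z=\pm\pi/4$ one is not in the domain of validity of a single-exponential Poincar\'e-type asymptotic, and a careful two-term connection formula must be established, with explicit coefficients of the form $1/\Gamma((1\pm\lambda)/4)$ and $1/\Gamma((3\pm\lambda)/4)$ and with remainders $\mathcal O(|z|^{-1})$ uniform in a neighborhood of the rays. I expect this to be handled by a dedicated lemma (the Stokes-ray analogue of Proposition \ref{pro:59}) proved by contour deformation of the integral representations of the parabolic cylinder functions; once it is available, the rest of the proof is a mechanical bookkeeping of substitutions and special-value cancellations analogous to those already carried out in Proposition \ref{pro:50}.
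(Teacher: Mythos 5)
Your plan follows the paper's proof essentially verbatim: the paper likewise reduces via \eqref{eqn:806}--\eqref{eqn:807} to the expansion of $c_1w_1(z)+c_2w_2(z)$ near the ray $\Arg z=\frac\pi4$, and the ``Stokes-ray analogue of Proposition \ref{pro:59}'' you call for is exactly Proposition \ref{pro:60}, proved there from the integral representation \eqref{eqn:614} of $\Theta$ at $\theta=\pm\frac\pi2$. The only slips are notational: both limits satisfy $\Arg(\pm z)\to+\frac\pi4$ (not $\pm\frac\pi4$), and the differing coefficients on the two sides come from evaluating that expansion at $z$ versus $-z$, not from a sign change of $\sigma(\Delta_2/b_{20}^2)$, which equals $1$ throughout.
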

\begin{proof}
Set
\begin{equation*}
z=\left(-\frac {\Delta_2}{4b_{20}^2}\right)^{\!\!\frac 14}\left(x+\frac {\Delta_1}{2\Delta_2}\right).
\end{equation*}
From \eqref{eqn:520}, \eqref{eqn:188}, and \eqref{eqn:187}, it follows that
\begin{equation}\label{eqn:806}
c_1u_1(x)+c_2 u_2(x)=\mathrm e^{-\frac {\mathrm i}4\left(\frac {b_{11}}{b_{20}}x^2+2\frac {b_{10}}{b_{20}}x\right)}
\Bigl(c_1 w_1(z)+c_2w_2(z)\Bigr).
\end{equation}
On the other side we have (see \eqref{eqn:97})
\begin{equation}\label{eqn:807}
-\frac {\mathrm i}4\left(\frac {b_{11}}{b_{20}}x^2+2\frac {b_{10}}{b_{20}}x\right)\pm \frac 12\, z^2
=\mathrm i \Sigma_{\mp}(x).
\end{equation}
Moreover, since \begin{equation*}
\Arg\left(\left(-\frac {\Delta_2}{4b_{20}^2}\right)^{\!\!\frac 14}\right)=\frac \pi 4,
\end{equation*}
 and
\begin{equation*}
\lim_{\abs{x}\to\infty}\Arg\left(1+\frac {\Delta_1}{2\Delta_2 x}\right)=0,
\end{equation*}
given $0<\epsilon<\frac \pi 4$, we have
\begin{equation}\label{eqn:808}
\abs{\Arg (\pm z)-\frac \pi 4}\le \epsilon,\qquad\text{\upshape for $x\to\pm \infty$}.
\end{equation}

In particular
\begin{equation}\label{eqn:809}
\pm z=\left(-\frac {\Delta_2}{4b_{20}^2}\right)^{\!\!\frac 14}\abs{x}\left(1+\mathcal O\left(\abs{x}^{-1}\right)\right),
\qquad \text{\upshape for $x\to\pm\infty$},
\end{equation}
and
\begin{equation}\label{eqn:810}
\mp \mathrm i z=\mathrm e^{-\mathrm i\frac \pi 2}\left(-\frac {\Delta_2}{4b_{20}^2}\right)^{\!\!\frac 14}
\abs{x}\left(1+\mathcal O\left(\abs{x}^{-1}\right)\right),
\qquad \text{\upshape for $x\to\pm\infty$}.
\end{equation}

 In conclusion the statement follows from \eqref{eqn:806}, \eqref{eqn:807}, \eqref{eqn:808}, \eqref{eqn:809}, \eqref{eqn:810}, and Proposition \ref{pro:60}.
\end{proof}

\paragraph{Assume $\Delta_2=0$ and $\Delta_1\ne 0$.}
The \emph{Airy functions}  are  two linearly independent solutions to the differential equation in the complex domain
\begin{equation*}
u''(z)-z u(z)=0,
\end{equation*}
given by the
entire analytic functions  (see \cite[(5.17.3)]{Lebedev})
\begin{equation*}
\Ai(z)=\sum_{k=0}^{\infty} \frac{z^{3k}}{3^{2k+\frac 23}k!\Gamma\!\left(k+\frac 23\right)}
-\sum_{k=0}^{\infty} \frac{z^{3k+1}}{3^{2k+\frac 43}k!\Gamma\!\left(k+\frac 43\right)},
\end{equation*}
and
\begin{equation*}
\Bi(z)=3^{\frac 12}\:\sum_{k=0}^{\infty} \frac{z^{3k}}{3^{2k+\frac 23}k!\Gamma\!\left(k+\frac 23\right)}
+3^{\frac 12}\:\sum_{k=0}^{\infty} \frac{z^{3k+1}}{3^{2k+\frac 43}k!\Gamma\!\left(k+\frac 43\right)}.
\end{equation*}

\begin{proposition}\label{pro:65}
The equation $Bu=0$ has two  linearly independent analytic solutions $u_1$ and $u_2$ given by
\begin{equation}\label{eqn:815}
u_j(x)=\mathrm e^{-\frac{\mathrm i}{4b_{20}}\left(b_{11}x^2+2b_{10}x\right)}v_j(x),
\end{equation}
where $j\in\{1,2\}$, and
\begin{equation}\label{eqn:816}
v_1(x)=
\Ai\left(\left(-\frac{\Delta_1}{4b_{20}^2}\right)^{\!\!\frac 13}\left(x+\frac{\Delta_0}{\Delta_1}\right)\right),\qquad
v_2(x)=\Bi\left(\left(-\frac{\Delta_1}{4b_{20}^2}\right)^{\!\!\frac 13}\left(x+\frac{\Delta_0}{\Delta_1}\right)\right).
\end{equation}
\end{proposition}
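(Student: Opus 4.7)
My plan is to follow exactly the same strategy as in the proof of Proposition \ref{pro:55}, adapted to the degenerate case $\Delta_2=0$, $\Delta_1\ne 0$. The same exponential conjugation that worked there will again strip off the first--order term and reduce $B$ to a Schr\"odinger--type operator, but now with a \emph{linear} (rather than quadratic) potential. That is precisely the defining feature of the Airy equation, which accounts for the appearance of $\Ai$ and $\Bi$ in \eqref{eqn:816}.

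First I would set $v(x)=\mathrm e^{\frac{\mathrm i}{4b_{20}}(b_{11}x^2+2b_{10}x)}u(x)$. The computation underlying \eqref{eqn:494} is purely algebraic in $\Delta_2,\Delta_1,\Delta_0$ and does not require $\Delta_2\ne 0$, so it still shows that $Bu=0$ is equivalent to
\begin{equation*}
v''(x)+\frac{1}{4b_{20}^2}\left(\Delta_2 x^2+\Delta_1 x+\Delta_0\right)v(x)=0.
\end{equation*}
Under the present hypothesis $\Delta_2=0$ this becomes
\begin{equation*}
v''(x)+\frac{\Delta_1}{4b_{20}^2}\left(x+\frac{\Delta_0}{\Delta_1}\right)v(x)=0.
\end{equation*}
Next I would introduce the affine substitution
\begin{equation*}
z=\left(-\frac{\Delta_1}{4b_{20}^2}\right)^{\!\!\frac 13}\left(x+\frac{\Delta_0}{\Delta_1}\right),\qquad w(z)=v(x),
\end{equation*}
so that $v''(x)=\left(-\frac{\Delta_1}{4b_{20}^2}\right)^{\!\frac 23}w''(z)$. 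A short computation shows that the equation for $v$ transforms into the Airy equation $w''(z)-zw(z)=0$.

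Finally, since $\Ai$ and $\Bi$ are entire solutions of the Airy equation with constant Wronskian equal to $\frac{1}{\pi}\ne 0$, they are linearly independent; pulling them back through the affine change of variable yields the two functions $v_1,v_2$ of \eqref{eqn:816}, which remain linearly independent, and multiplication by the non--vanishing exponential factor produces the required pair $u_1,u_2$. No substantive obstacle is expected: the only step that demands any care is verifying that the exponential conjugation exactly eliminates the first--order term and produces the stated potential $\Delta_2 x^2+\Delta_1 x+\Delta_0$, but this is the same routine algebra already performed in Proposition \ref{pro:55}, now specialized to $\Delta_2=0$.
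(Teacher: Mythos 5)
Your proposal is correct and follows essentially the same route as the paper's own proof: the same exponential conjugation, the same affine change of variable $z=\bigl(-\tfrac{\Delta_1}{4b_{20}^2}\bigr)^{1/3}\bigl(x+\tfrac{\Delta_0}{\Delta_1}\bigr)$ reducing the equation to $w''-zw=0$, and linear independence of $\Ai$, $\Bi$. The only difference is that you spell out the intermediate reduction to $v''+\tfrac{1}{4b_{20}^2}(\Delta_1x+\Delta_0)v=0$ and the Wronskian value, which the paper leaves as "a simple computation."
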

\begin{proof}
Set
\begin{equation*}
v(x)=\mathrm e^{\frac{\mathrm i}{4b_{20}}\,\left(b_{11} x^2+2b_{10} x\right)}u(x),
\end{equation*}
and
\begin{equation*}
w(z)=v\left(\left(-\frac{\Delta_1}{4b_{20}^2}\right)^{\!\!-\frac 13}z-\frac {\Delta_0}{\Delta_1}\right).
\end{equation*}
Then a simple computation shows that $Bu=0$ if and only if
$w$ solves
the Airy equation
\begin{equation*}
w''(z)-z w(z)=0. \qedhere
\end{equation*}
\end{proof}
  \begin{proposition}\label{pro:51}
Let $u_1$ and $u_2$ be as in Proposition \ref{pro:65}. Then we have the following asymptotic expansions.
\begin{align}\label{eqn:813}
&\begin{aligned}[t]
&c_1 u_1(x)+c_2 u_2(x)=
\frac {1}{2\sqrt{\pi}}
\left(-\frac{\Delta_1}{4b_{20}^2}\right)^{\!\!-\frac 1{12}}\,\abs{x}^{-\frac 14}\cdot
\\&\quad \cdot\left\{2c_2\mathrm e^{\mathrm i \Sigma_-(x)}
\left(1+\mathcal O\left(\abs{x}^{-1}\right)\right)-
\left(c_1+\mathrm i c_2\right)\mathrm e^{\mathrm i \Sigma_+(x)}
\left(1+\mathcal O\left(\abs{x}^{-1}\right)\right)\right\},
\qquad
\text{\upshape for $x\to+\infty$},
\end{aligned}
\\\label{eqn:814}
&\begin{aligned}[t]
&c_1 u_1(x)+c_2 u_2(x)=
\frac{1}{2\sqrt{2\pi}}\,
\left(-\frac{\Delta_1}{4b_{20}^2}\right)^{\!\!-\frac 1{12}}\,\abs{x}^{-\frac 14}\cdot
\\&\qquad\cdot\left\{\bigl((1-\mathrm i)c_1+(1+\mathrm i)c_2\bigr)\,\mathrm e^{\mathrm i \Sigma_+(x)}
\left(1+\mathcal O\left(\abs{x}^{-1}\right)\right)\right.+
\\ & \hspace{4cm} +
\left.\bigl((1+\mathrm i)c_1+(1-\mathrm i)c_2\bigl)\, \mathrm e^{\mathrm i \Sigma_-(x)}
\left(1+\mathcal O\left(\abs{x}^{-1}\right)\right)\right\},
\qquad\text{\upshape for $x\to-\infty$}.
\end{aligned}
\end{align}
\end{proposition}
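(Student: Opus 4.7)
}
The starting point is Proposition \ref{pro:65}, which gives the explicit representation
\begin{equation*}
c_1u_1(x)+c_2u_2(x)=\mathrm e^{-\frac{\mathrm i}{4b_{20}}(b_{11}x^2+2b_{10}x)}\bigl(c_1\Ai(z)+c_2\Bi(z)\bigr),\qquad
z=\left(-\frac{\Delta_1}{4b_{20}^2}\right)^{\!\!\frac 13}\!\left(x+\frac{\Delta_0}{\Delta_1}\right),
\end{equation*}
reducing the question to the asymptotic behavior of a linear combination of Airy functions evaluated at $z$. The plan is to mimic the strategy used in Propositions \ref{pro:50} and \ref{pro:61}: locate the ray along which $z\to\infty$, invoke the appropriate asymptotic expansions of $\Ai$ and $\Bi$ (presumably proved in Sections \ref{sec:5}--\ref{sec:6} as an Airy-analog of the results used there for the Weber functions), and match phases with $\Sigma_\pm(x)$ via careful branch tracking.

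The first step is to compute $\Arg z$. By the definition of the principal power, $\Arg((-\Delta_1/4b_{20}^2)^{1/3})\in(-\pi/3,\pi/3]$, so for $x\to+\infty$ the argument $z$ stays in a closed sector strictly inside $|\Arg z|\le\pi/3$ (unless $\Delta_1/b_{20}^2$ is real positive, a borderline case to be handled separately). In this sector one can use the simple asymptotic expansions
\begin{equation*}
\Ai(z)\sim\frac{1}{2\sqrt\pi}\,z^{-\frac 14}\mathrm e^{-\frac 23 z^{3/2}},\qquad
\Bi(z)\sim\frac{1}{\sqrt\pi}\,z^{-\frac 14}\mathrm e^{\frac 23 z^{3/2}}.
\end{equation*}
For $x\to-\infty$ the argument of $z$ is shifted by $\pi$ into the sector around the negative real axis, where $\Bi$ picks up a second exponential contribution; one uses the corresponding two-term expansions of both $\Ai$ and $\Bi$ valid in $|\Arg z-\pi|\le\pi-\epsilon$. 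Combining the two resulting exponentials with the coefficients $c_1$, $c_2$ produces the explicit complex linear combinations of $\mathrm e^{\mathrm i\Sigma_\pm(x)}$ displayed in \eqref{eqn:814}; the factors $(1\pm\mathrm i)/2\sqrt2$ come from the standard connection constants $\mathrm e^{\pm\mathrm i\pi/4}$ that appear when $\Bi$ is evaluated past the Stokes ray $\Arg z=\pm\pi/3$.

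The second step is the algebraic identification of the exponents. Using the branch identity
\begin{equation*}
\left(-\frac{\Delta_1}{b_{20}^2}\right)^{\!\!\frac 12}=\mathrm i\,\sigma\!\left(\frac{\Delta_1}{b_{20}^2}\right)\!\left(\frac{\Delta_1}{b_{20}^2}\right)^{\!\!\frac 12}
\end{equation*}
(the analog of \eqref{eqn:817}, proved in exactly the same way from the definition \eqref{eqn:448}), one computes
\begin{equation*}
z^{3/2}=\left(-\frac{\Delta_1}{4b_{20}^2}\right)^{\!\!\frac 12}\!\left(x+\frac{\Delta_0}{\Delta_1}\right)^{\!\!\frac 32}
=\frac{\mathrm i}{2}\,\sigma\!\left(\frac{\Delta_1}{b_{20}^2}\right)\!\left(\frac{\Delta_1}{b_{20}^2}\right)^{\!\!\frac 12}\!x^{\frac 32}\!\left(1+\frac{\Delta_0}{\Delta_1 x}\right)^{\!\!\frac 32},
\end{equation*}
for $x\to+\infty$. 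A direct comparison with \eqref{eqn:465} in the case $\Delta_2=0\ne\Delta_1$ then yields
\begin{equation*}
-\tfrac{\mathrm i}{4b_{20}}(b_{11}x^2+2b_{10}x)\mp\tfrac 23 z^{3/2}=\mathrm i\,\Sigma_\pm(x),
\end{equation*}
so that multiplying the Airy asymptotics by the prefactor $\mathrm e^{-\mathrm i(b_{11}x^2+2b_{10}x)/4b_{20}}$ converts $\mathrm e^{\mp\frac 23 z^{3/2}}$ into $\mathrm e^{\mathrm i\Sigma_\pm(x)}$. The $|x|^{-1/4}$ factor and the prefactor $\left(-\Delta_1/4b_{20}^2\right)^{-1/12}$ come from $z^{-1/4}$ and from $(x+\Delta_0/\Delta_1)^{-1/4}=|x|^{-1/4}(1+O(|x|^{-1}))$. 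For $x\to-\infty$ the same formula holds with $x^{3/2}$ replaced by its value along the ray $\Arg(x+\Delta_0/\Delta_1)\approx\pi$, which introduces an extra factor $\mathrm e^{3\mathrm i\pi/2}$; this, combined with the two-exponential Bi expansion in that sector, accounts for the $(1\pm\mathrm i)$-coefficients in \eqref{eqn:814}.

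The main obstacle is bookkeeping: tracking the principal branches of $z^{3/2}$ and of $\left(-\Delta_1/4b_{20}^2\right)^{1/3}$ across the change of sign of $x$, and identifying which Airy asymptotic applies in each sector (since $\Bi$ has different expansions on the two sides of the Stokes rays $\Arg z=\pm\pi/3$). Once the correct sectors and the corresponding Airy expansions are fixed, the rest is an algebraic verification as above. The borderline case $\Arg z=\pm\pi/3$ (i.e.\ $\Delta_1/b_{20}^2\in\mathbb{R}_+^\ast$ or $\mathbb{R}_-^\ast$) can be absorbed into the general sectorial expansion, since the error estimates $O(|x|^{-1})$ remain uniform up to the Stokes line in the form of the asymptotic results proved in Sections \ref{sec:5} and \ref{sec:6}.
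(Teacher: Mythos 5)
Your overall route is the same as the paper's: reduce to $c_1\Ai(z)+c_2\Bi(z)$ via Proposition \ref{pro:65}, locate the sector containing $\Arg(\pm z)$, apply Airy asymptotics there, and convert $\mathrm e^{\mp\frac 23 z^{3/2}}$ into $\mathrm e^{\mathrm i\Sigma_\pm(x)}$ using the branch identity $\left(-\Delta_1/b_{20}^2\right)^{1/2}=\mathrm i\,\sigma(\Delta_1/b_{20}^2)\left(\Delta_1/b_{20}^2\right)^{1/2}$ and $-\mathrm i(-x)^{3/2}=x^{3/2}$ for $x<0$; your phase bookkeeping matches the paper's. The genuine gap is the expansion you use for $\Bi$ as $x\to+\infty$. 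The one-term formula $\Bi(z)\sim\pi^{-1/2}z^{-1/4}\mathrm e^{\frac 23 z^{3/2}}$ keeps only the dominant exponential, so your computation yields a coefficient proportional to $c_1$ alone for $\mathrm e^{\mathrm i\Sigma_+(x)}$ in \eqref{eqn:813}, not the asserted $c_1+\mathrm i c_2$: the $\mathrm i c_2$ contribution comes from the recessive exponential of $\Bi$, which your expansion discards. The paper instead uses the compound two-exponential form of $\Bi$ from \cite[10.4.65]{Abramowitz-Stegun}, rewritten as \eqref{eqn:490}, which retains both $\mathrm e^{\frac 23 z^{3/2}}$ (coefficient $2$) and $\mathrm e^{-\frac 23 z^{3/2}}$ (coefficient $\mathrm i$), each with relative error $\mathcal O\bigl(\abs{z}^{-3/2}\bigr)$.

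This is not cosmetic. The recessive coefficient is exactly what matters when $\Arg z$ approaches the ray $\pi/3$ (i.e.\ $\Delta_1/b_{20}^2>0$), for there $z^{3/2}$ is purely imaginary, neither exponential dominates, and both terms of \eqref{eqn:813} are of the same order --- which is also the situation Theorem \ref{thm:32} must discriminate. Your one-term $\Bi$ expansion is not valid on that ray (it requires $\abs{\Arg z}\le \pi/3-\epsilon$), and your closing claim that the borderline case ``can be absorbed into the general sectorial expansion'' presupposes precisely the compound expansions you did not invoke; note also that Sections \ref{sec:5}--\ref{sec:6} contain no Airy results (the paper quotes \cite{Abramowitz-Stegun} directly), and that the oscillatory expansions for $x\to-\infty$ hold for $\abs{\Arg(-z)}\le \frac{2\pi}3-\epsilon$, not $\pi-\epsilon$ as you state. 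To repair the argument, replace your $x\to+\infty$ step by \eqref{eqn:486} for $\Ai$ together with \eqref{eqn:490} for $\Bi$ (valid for $-\pi/3+\epsilon\le\Arg z\le\pi/3+\epsilon$, which covers the boundary ray); the remainder, including the $x\to-\infty$ case via \eqref{eqn:487} and \eqref{eqn:489}, then proceeds as in the paper.
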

\begin{proof}
First we  prove the  following asymptotic expansions.
\begin{align}
\label{eqn:481}
&\begin{aligned}[t]
&v_1(x)=
\frac{1}{2\sqrt\pi}\left(-\frac{\Delta_1}{4b_{20}^2}\right)^{\!\!-\frac 1{12}}\,\abs{x}^{-\frac 14}
\,\mathrm e^{-\frac 23 \left(-\frac{\Delta_1}{4b_{20}^2}\right)^{\!\!\frac 12}\abs{x}^{\frac 32}\left(1+\frac{\Delta_0}{\Delta_1x}\right)^{\!\!\frac 32}}
\left(1+\mathcal O\left(\abs{x}^{-1}\right)\right),
\end{aligned}
\\\label{eqn:482}
&\begin{aligned}[t]
&v_2(x)=
\frac {1}{2\sqrt{\pi}}
\left(-\frac{\Delta_1}{4b_{20}^2}\right)^{\!\!-\frac 1{12}}\,\abs{x}^{-\frac 14}
\,\left\{2\mathrm e^{\frac 23 \left(-\frac{\Delta_1}{4b_{20}^2}\right)^{\!\!\frac 12}
\abs{x}^{\frac 32}\left(1+\frac{\Delta_0}{\Delta_1x}\right)^{\!\!\frac 32}}
\left(1+\mathcal O\left(\abs{x}^{-1}\right)\right)+\right.
\\&\qquad\qquad\left.+\mathrm i\mathrm e^{-\frac 23 \left(-\frac{\Delta_1}{4b_{20}^2}\right)^{\!\!\frac 12}
\abs{x}^{\frac 32}\left(1+\frac{\Delta_0}{\Delta_1x}\right)^{\!\!\frac 32}}
\left(1+\mathcal O\left(\abs{x}^{-1}\right)\right)\right\},
\end{aligned}
\end{align}
for $x\to+\infty$, and
\begin{align}
\label{eqn:483}
&\begin{aligned}[t]
v_1(x)&=
\frac{1}{2\sqrt{2\pi}}\,
\left(-\frac{\Delta_1}{4b_{20}^2}\right)^{\!\!-\frac 1{12}}\,\abs{x}^{-\frac 14}
\left\{(1-\mathrm i)\,\mathrm e^{\mathrm i\frac 23\,\left(-\frac{\Delta_1}{4b_{20}^2}\right)^{\!\!\frac 12}
\abs{x}^{\frac 32}\left(1+\frac{\Delta_0}{\Delta_1x}\right)^{\!\!\frac 32}}
\left(1+\mathcal O\left(\abs{x}^{-1}\right)\right)+\right.
\\&\qquad\qquad
\left.+(1+\mathrm i)\, \mathrm e^{-\mathrm i\frac 23\,\left(-\frac{\Delta_1}{4b_{20}^2}\right)^{\!\!\frac 12}
\abs{x}^{\frac 32}\left(1+\frac{\Delta_0}{\Delta_1x}\right)^{\!\!\frac 32}}
\left(1+\mathcal O\left(\abs{x}^{-1}\right)\right)\right\},
\end{aligned}
\\
\label{eqn:485}
&\begin{aligned}[t]
v_2(x)&=
\frac{1}{2\sqrt{2\pi}}\,
\left(-\frac{\Delta_1}{4b_{20}^2}\right)^{\!\!-\frac 1{12}}\,\abs{x}^{-\frac 14}
\left\{(1+\mathrm i)\,\mathrm e^{\mathrm i\frac 23\,\left(-\frac{\Delta_1}{4b_{20}^2}\right)^{\!\!\frac 12}
\abs{x}^{\frac 32}\left(1+\frac{\Delta_0}{\Delta_1x}\right)^{\!\!\frac 32}}
\left(1+\mathcal O\left(\abs{x}^{-1}\right)\right)+\right.
\\&\qquad\qquad
\left.+(1-\mathrm i)\, \mathrm e^{-\mathrm i\frac 23\,\left(-\frac{\Delta_1}{4b_{20}^2}\right)^{\!\!\frac 12}
\abs{x}^{\frac 32}\left(1+\frac{\Delta_0}{\Delta_1x}\right)^{\!\!\frac 32}}
\left(1+\mathcal O\left(\abs{x}^{-1}\right)\right)\right\},
\end{aligned}
\end{align}
for $x\to-\infty$.

Let  $0<\epsilon<\pi/3$.  Airy functions have the following asymptotic expansions for $\abs{z}\to\infty$, see
\cite[10.4.59,  and 10.4.65]{Abramowitz-Stegun}:
\begin{align}
\label{eqn:486}
&\Ai(z)=\frac{z^{-\frac 14}}{2\sqrt\pi}\,\mathrm e^{-\frac 23 z^{\frac 32}}\left(1
+\mathcal O\left(\abs{z}^{-\frac 32}\right)\right),\qquad\text{\upshape for $\abs{\Arg z}\le \pi-\epsilon$}.
\\
\label{eqn:490}
&\begin{aligned}[t]
&\Bi (z)=
\sqrt{\frac 2 \pi}\,\mathrm e^{\mathrm i\frac \pi 6 }\left(\mathrm e^{-\mathrm i\frac \pi 3} z\right)^{-\frac14}\cdot
\left\{\sin\left(\frac 23\,\left(\mathrm e^{-\mathrm i\frac \pi 3} z\right)^{\frac32}+\frac \pi 4-\frac{\log 2}2\,\mathrm i\right)
\left(1+\mathcal O\left(\abs{z}^{-3}\right)\right)-\right.
\\
&\qquad\quad\left.-\cos\left(\frac 23\,\left(\mathrm e^{-\mathrm i\frac \pi 3} z\right)^{\frac32}+\frac \pi 4-\frac{\log 2}2\,\mathrm i\right)
\cdot\mathcal O\left(\abs{z}^{-\frac 32}\right)\right\}
\\
&\quad=
\frac {\mathrm e^{\mathrm i\frac \pi 4 }z^{-\frac14}}{\sqrt{2\pi}}
\left\{(1-\mathrm i)\,\mathrm e^{\frac 23 z^{\frac 32}}\left(1+\mathcal O\left(\abs{z}^{-\frac 32}\right)\right)+
\frac {1+\mathrm i}2\,\mathrm e^{-\frac 23 z^{\frac 32}}\left(1+\mathcal O\left(\abs{z}^{-\frac 32}\right)\right)\right\},
\\
&\hspace{10cm}\text{\upshape for $-\frac \pi 3+\epsilon\le\Arg z\le \frac \pi 3+\epsilon$},
\end{aligned}
\end{align}
and, see \cite[10.4.60, and 10.4.64]{Abramowitz-Stegun}:
\begin{align}
\label{eqn:487}
&\begin{aligned}[t]
&\Ai (z)=\frac{(-z)^{-\frac 14}}{\sqrt\pi}\,\left\{\sin\left(\frac 23\,(-z)^{\frac 32}+\frac \pi 4\right)
\left(1
+\mathcal O\left(\abs{z}^{-3}\right)\right)-\cos\left(\frac 23\,(-z)^{\frac 32}+\frac \pi 4\right)
\cdot\mathcal O\left(\abs{z}^{-\frac 32}\right)\right\}
\\
&\quad=\frac{(-z)^{-\frac 14}}{2\sqrt{2\pi}}\left\{(1-\mathrm i)\,\mathrm e^{\mathrm i\frac 23\,(-z)^{\frac 32}}
\left(1+\mathcal O\left(\abs{z}^{-\frac 32}\right)\right)+
(1+\mathrm i)\, \mathrm e^{-\mathrm i\frac 23\,(-z)^{\frac 32}}
\left(1+\mathcal O\left(\abs{z}^{-\frac 32}\right)\right)
\right\},
\\
&\hspace{10cm}\text{\upshape for $\abs{\Arg (-z)}\le \frac{2\pi}3-\epsilon$},
\end{aligned}
\\
\label{eqn:489}
&\begin{aligned}[t]
&\Bi (z)=\frac{(-z)^{-\frac14}}{\sqrt\pi}\left\{\cos\left(\frac 23\,(-z)^{\frac32}+\frac \pi 4\right)
\left(1
+\mathcal O\left(\abs{z}^{-3}\right)\right)+\sin\left(\frac 23\,(-z)^{\frac32}+\frac \pi 4\right)
\cdot\mathcal O\left(\abs{z}^{-\frac 32}\right)\right\}
\\
&\quad=\frac {(-z)^{-\frac 14}}{2\sqrt{2\pi}}\left\{(1+\mathrm i)\,\mathrm e^{\mathrm i\frac 23\,(-z)^{\frac 32}}
\left(1+\mathcal O\left(\abs{z}^{-\frac 32}\right)\right)+
(1-\mathrm i)\, \mathrm e^{-\mathrm i\frac 23\,(-z)^{\frac 32}}
\left(1+\mathcal O\left(\abs{z}^{-\frac 32}\right)\right)
\right\},
\\
&\hspace{10cm}\text{\upshape for $\abs{\Arg (-z)}\le \frac{2\pi}3-\epsilon$}.
\end{aligned}
\end{align}

Let
\begin{equation}\label{eqn:812}
z=\left(-\frac{\Delta_1}{4b_{20}^2}\right)^{\!\!\frac 13}\left(x+\frac{\Delta_0}{\Delta_1}\right),
\end{equation}
and
\begin{equation*}
0<\epsilon<\frac \pi 6.
\end{equation*}
Since
\begin{gather*}
\pm z=\left(-\frac{\Delta_1}{4b_{20}^2}\right)^{\!\!\frac 13}\abs{x}\left(1+\frac{\Delta_0}{\Delta_1x}\right),
\qquad\text{\upshape for $x\to\pm \infty$},
\\
-\frac \pi 3<\Arg\left(\left(-\frac{\Delta_1}{4b_{20}^2}\right)^{\!\!\frac 13}\right)\le \frac \pi 3,
\intertext{and}
\lim_{\abs{x}\to\infty}\abs{\Arg\left(1+\frac{\Delta_0}{\Delta_1 x}\right)}= 0,
\end{gather*}
we have for $x\to\pm \infty$:
\begin{equation*}
\Arg(\pm z)=\Arg\left(\left(-\frac{\Delta_1}{4b_{20}^2}\right)^{\!\!\frac 13}\right)+\Arg\left(1+\frac{\Delta_0}{\Delta_1 x}\right)\le
\frac \pi 3+\epsilon\le \frac {2\pi}3-\epsilon,
\end{equation*}
for
\begin{equation*}\abs{\Arg\left(1+\frac{\Delta_0}{\Delta_1 x}\right)}\le\epsilon,
\end{equation*}
and
\begin{equation*}
\Arg(\pm z)=\Arg\left(\left(-\frac{\Delta_1}{4b_{20}^2}\right)^{\!\!\frac 13}\right)+\Arg\left(1+\frac{\Delta_0}{\Delta_1 x}\right)\ge
-\frac \pi 3+\epsilon\ge -\frac {2\pi}3+\epsilon,
\end{equation*}
for
\begin{equation*}\abs{\Arg\left(1+\frac{\Delta_0}{\Delta_1 x}\right)}\le
\frac 12\left[\frac \pi 3+\Arg\left(\left(-\frac{\Delta_1}{4b_{20}^2}\right)^{\!\!\frac 13}\right)\right],
\end{equation*}
and
\begin{equation*}
\epsilon\le\frac 12\left[\frac \pi 3+\Arg\left(\left(-\frac{\Delta_1}{4b_{20}^2}\right)^{\!\!\frac 13}\right)\right].
\end{equation*}
This shows that we can make the substitution \eqref{eqn:812} into expansions \eqref{eqn:486},
 \eqref{eqn:490},  \eqref{eqn:487}, and  \eqref{eqn:489}.

Since
\begin{equation*}
(\pm z)^{-\frac 14}=\left(-\frac{\Delta_1}{4b_{20}^2}\right)^{\!\!-\frac 1{12}}\abs{x}^{-\frac 14}\left(1+\mathcal O\left(\abs{x}^{-1}\right)\right),\qquad
\text{\upshape for $x\to \pm\infty$},
\end{equation*}
thanks to \eqref{eqn:816}, we obtain \eqref{eqn:481},
\eqref{eqn:482}, \eqref{eqn:483},  and \eqref{eqn:485}.

Now observe that
\begin{equation*}
-\mathrm i (-x)^{\frac 32}=\mathrm e^{\mathrm i \frac {3\pi}2}(-x)^{\frac 32}=
\mathrm e^{\frac 32\left(\log(-x)+\mathrm i\pi\right)}=x^{\frac 32},\quad\text{\upshape for $x<0$},
\end{equation*}
and (see \eqref{eqn:817})
\begin{equation*}
\left(-\frac{\Delta_1}{4b_{20}^2}\right)^{\!\!\frac 12}=\mathrm i\sigma\!\left(\frac{\Delta_1}{4b_{20}^2}\right)
\left(\frac{\Delta_1}{4b_{20}^2}\right)^{\!\!\frac 12}.
\end{equation*}
It follows that
\begin{multline*}
-\frac{\mathrm i}{4b_{20}}\left(b_{11}x^2+2b_{10}x\right)
\pm\frac 23 \left(-\frac{\Delta_1}{4b_{20}^2}\right)^{\!\!\frac 12}
\abs{x}^{\frac 32}\left(1+\frac{\Delta_0}{\Delta_1x}\right)^{\!\!\frac 32}
= \\ =
-\frac{\mathrm i}{4}\left\{\frac { b_{11}}{b_{20}}\,x^2+2\frac {b_{10}}{b_{20}}\,x
\mp\frac 43 \sigma\!\left(\frac{\Delta_1}{b_{20}^2}\right)\left(\frac{\Delta_1}{b_{20}^2}\right)^{\!\!\frac 12}
x^{\frac 32}\left(1+\frac{\Delta_0}{\Delta_1x}\right)^{\!\!\frac 32}\right\}=\mathrm i \Sigma_\mp(x),
\qquad\text{\upshape for $x\to+\infty$},
\end{multline*}
and
\begin{multline*}
-\frac{\mathrm i}{4b_{20}}\left(b_{11}x^2+2b_{10}x\right)
\pm\mathrm i\frac 23 \left(-\frac{\Delta_1}{4b_{20}^2}\right)^{\!\!\frac 12}
\abs{x}^{\frac 32}\left(1+\frac{\Delta_0}{\Delta_1x}\right)^{\!\!\frac 32}
= \\ =
-\frac{\mathrm i}{4}\left\{\frac { b_{11}}{b_{20}}\,x^2+2\frac {b_{10}}{b_{20}}\,x
\pm\frac 43 \sigma\!\left(\frac{\Delta_1}{b_{20}^2}\right)\left(\frac{\Delta_1}{b_{20}^2}\right)^{\!\!\frac 12}
x^{\frac 32}\left(1+\frac{\Delta_0}{\Delta_1x}\right)^{\!\!\frac 32}\right\}=\mathrm i \Sigma_\pm(x),
\qquad\text{\upshape for $x\to-\infty$}.
\end{multline*}
It follows that \eqref{eqn:815}, \eqref{eqn:481},
\eqref{eqn:482}, \eqref{eqn:483},  and \eqref{eqn:485}
imply \eqref{eqn:813}, and \eqref{eqn:814}.
\end{proof}
\paragraph{Assume $\Delta_2=\Delta_1= 0$.}
In this case it is sufficient to observe that the general solution is given by  (see \eqref{eqn:817})
\begin{equation}\label{eqn:819}
cu_1(x)+c_2u_2(x)=\begin{cases}
\begin{aligned}[b]
&\mathrm e^{-\frac {\mathrm i}4 \left(\frac {b_{11}}{b_{20}}x^2+2\frac {b_{10}}{b_{20}} x\right)}
\biggl\{c_1\mathrm e^{\frac {\mathrm i}2\sigma\!\left(\frac {\Delta_0}{b_{20}^2}\right)\left(\frac {\Delta_0}{b_{20}^2}\right)^{\!\!\frac 12} x}
+c_2\mathrm e^{-\frac {\mathrm i}2\sigma\!\left(\frac {\Delta_0}{b_{20}^2}\right)\left(\frac {\Delta_0}{b_{20}^2}\right)^{\!\!\frac 12} x}\biggr\}
=\\&\qquad =c_1\mathrm e^{\mathrm i \Sigma_-(x)}+c_2\mathrm e^{\mathrm i \Sigma_+(x)},
\end{aligned}&\text{\upshape if $\Delta_0\ne 0$},
\\[8pt]
\mathrm e^{-\frac {\mathrm i}4 \left(\frac {b_{11}}{b_{20}}x^2+2\frac {b_{10}}{b_{20}} x\right)}
\left(c_1 +c_2 x\right)=
\left(c_1 +c_2 x\right)\mathrm e^{\mathrm i \Sigma_\pm(x)},&\text{\upshape if $\Delta_0=0$}.
\end{cases}\end{equation}

\subsubsection{Proof of Theorem  \ref{thm:16}}\label{subsec:2}

\begin{theorem}\label{thm:32}
 $B$ is globally regular and one-to-one if and only if
\begin{equation}\label{eqn:832}
\mathrm e^{\mathrm i x\Xi_\pm}\notin \mathcal S',
\end{equation}
 or
\begin{equation}\label{eqn:833}
\mathrm e^{\mathrm i x\Xi_-}\notin \mathcal S',\quad
\mathrm e^{\mathrm i x\Xi_+}\in \mathcal S,\quad \Delta_2\ne 0, \quad\lambda\notin\{1+2n:n\in\mathbb Z_+\},
\end{equation}
 or
\begin{equation}\label{eqn:834}
\mathrm e^{\mathrm i x\Xi_-}\notin \mathcal S',\quad
\mathrm e^{\mathrm i x\Xi_+}\in \mathcal S,\quad \Delta_2= 0.
\end{equation}
\end{theorem}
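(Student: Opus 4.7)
The plan is to characterize $\ker B\cap\mathcal S'(\mathbb R)$ directly from the fundamental system of $Bu=0$ and combine this with Proposition~\ref{pro:47}. Since $b_{20}\ne 0$, every distributional solution of $Bu=0$ is automatically classical (smooth), so
\begin{equation*}
\ker B=\left\{c_1u_1+c_2u_2\mid (c_1,c_2)\in\mathbb C^2,\ c_1u_1+c_2u_2\in\mathcal S'(\mathbb R)\right\},
\end{equation*}
with $u_1,u_2$ the explicit fundamental solutions of Propositions~\ref{pro:55} and~\ref{pro:65} (or taken from \eqref{eqn:819}). Proposition~\ref{pro:47} already identifies global regularity of $B$ with the dichotomy $\mathrm e^{\mathrm i x\Xi_\pm(x)}\in\mathcal S\cup(\mathcal C^\infty\setminus\mathcal S')$; under this dichotomy, a direct comparison of \eqref{eqn:818} with \eqref{eqn:465} shows that $\Im\Sigma_\pm(x)-\Im\bigl(x\Xi_\pm(x)\bigr)$ is of lower order than the dominant part of $\Im\bigl(x\Xi_\pm(x)\bigr)$, so that any fixed power of $\abs{x}$ times $\mathrm e^{\mathrm i\Sigma_\pm(x)}$ lies in $\mathcal S$ (respectively outside $\mathcal S'$) exactly when $\mathrm e^{\mathrm i x\Xi_\pm(x)}$ does. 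The task then reduces to deciding when the only $(c_1,c_2)$ making $c_1u_1+c_2u_2$ tempered is $(0,0)$.

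I would split on $(\Delta_2,\Delta_1)$. In the degenerate branch $\Delta_2=\Delta_1=0$, the explicit formula \eqref{eqn:819} reduces the question to which of $\mathrm e^{\mathrm i\Sigma_\pm}$ and $x\,\mathrm e^{\mathrm i\Sigma_\pm}$ are in $\mathcal S'$, and the three cases \eqref{eqn:832}--\eqref{eqn:834} follow by inspection. In the Airy branch $\Delta_2=0\ne\Delta_1$, Proposition~\ref{pro:51} provides a rigid two-by-two system: under hypothesis \eqref{eqn:834}, annihilating the bad $\mathrm e^{\mathrm i\Sigma_-}$ mode at $+\infty$ forces $c_2=0$ via \eqref{eqn:813}, after which the coefficient of $\mathrm e^{\mathrm i\Sigma_-(x)}$ at $-\infty$ in \eqref{eqn:814} becomes $(1+\mathrm i)c_1$, so $c_1=0$ as well. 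In the main branch $\Delta_2\ne 0$, I would apply Proposition~\ref{pro:50} when $\Arg(\Delta_2/b_{20}^2)\ne 0$: the coefficients of the bad mode $\mathrm e^{\mathrm i\Sigma_-(x)}\abs{x}^{-(1+\lambda)/2}$ at $\pm\infty$, read off from part~(A), form the system
\begin{equation*}
\frac{c_1}{\Gamma\!\left(\frac{1-\lambda}4\right)}\pm\frac{c_2}{2\,\Gamma\!\left(\frac{3-\lambda}4\right)}=0,
\end{equation*}
whose determinant $\left[\Gamma\!\left(\frac{1-\lambda}4\right)\Gamma\!\left(\frac{3-\lambda}4\right)\right]^{-1}$ is nonzero exactly when $\lambda\notin\{1+2n:n\in\mathbb Z_+\}$; hence \eqref{eqn:833} forces $(c_1,c_2)=(0,0)$. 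Conversely, when $\lambda\in\{1+2n\}$, parts~(D)--(E) of Proposition~\ref{pro:50} exhibit an explicit Hermite-type solution $\sim x^n\mathrm e^{\mathrm i\Sigma_+(x)}$ lying in $\mathcal S(\mathbb R)$, so $\ker B\ne 0$ and one-to-one-ness fails. The remaining logical possibility, that $\mathrm e^{\mathrm i x\Xi_\pm}\in\mathcal S$ holds for both signs, puts the whole two-dimensional space of fundamental solutions into $\mathcal S(\mathbb R)$ and so must be excluded.

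The delicate step will be the sub-case $\Delta_2\ne 0$ with $\Arg(\Delta_2/b_{20}^2)=0$ handled by Proposition~\ref{pro:61}, where each of $\pm\infty$ exhibits \emph{both} modes $\mathrm e^{\mathrm i\Sigma_+}$ and $\mathrm e^{\mathrm i\Sigma_-}$ with distinct reciprocal-Gamma coefficients. Here annihilating the bad mode becomes a more elaborate linear system on $(c_1,c_2)$ than in Proposition~\ref{pro:50}, and one must verify using $\Gamma\!\left(\frac{1\pm\lambda}4\right)^{-1}$ and $\Gamma\!\left(\frac{3\pm\lambda}4\right)^{-1}$ that it still pins $(c_1,c_2)$ to $(0,0)$ outside the exceptional set $\{1+2n\}$, and that within the exceptional set the Hermite-type cases (D)--(E) and (H)--(I) of Proposition~\ref{pro:61} again produce a Schwartz kernel element. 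Once this bookkeeping is dispatched, the three conditions \eqref{eqn:832}--\eqref{eqn:834} cover exhaustively all situations compatible with $B$ being both globally regular and one-to-one.
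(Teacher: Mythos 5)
Your proposal follows essentially the same route as the paper's proof: translate the conditions on $\mathrm e^{\mathrm i x\Xi_\pm}$ into the corresponding conditions on $\mathrm e^{\mathrm i\Sigma_\pm}$ by comparing the explicit expansions, invoke Proposition \ref{pro:47} for global regularity, and then decide triviality of $\ker B\cap\mathcal S'(\mathbb R)$ case by case from Propositions \ref{pro:50}, \ref{pro:61}, \ref{pro:51} and formula \eqref{eqn:819}, with the exceptional Hermite-type values $\lambda\in\{1+2n:n\in\mathbb Z_+\}$ detected through the vanishing of the reciprocal Gamma factors. One small correction to your justification of the transfer step: $\Im\Sigma_\pm-\Im\bigl(x\Xi_\pm\bigr)$ is in general of the \emph{same} order as the leading term (the quadratic coefficients differ by the factor $\tfrac 12$), so the equivalence rests on the corresponding coefficients in \eqref{eqn:838}--\eqref{eqn:840} being positive multiples of one another rather than on the difference being of lower order.
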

\begin{proof}
We have  the following asymptotic expansions for $\abs{x}\to\infty$.
\begin{trivlist}
\item
If $\Delta_2\ne 0$,
\begin{align}\notag
x\Xi_\pm(x)&=
-\frac 12 \left(\frac {b_{11}}{b_{20}}
\pm\sigma\!\left(\frac {\Delta_2}{b_{20}^2}\right)\left(\frac {\Delta_2}{b_{20}^2}\right)^{\!\!\frac 12}\right)x^2
-\frac 12\left(\frac {b_{10}}{b_{20}}\pm
\frac 12\,\sigma\!\left(\frac {\Delta_2}{b_{20}^2}\right)\left(\frac {\Delta_2}{b_{20}^2}\right)^{\!\!\frac 12}
\frac {\Delta_1}{\Delta_2}\right)x+
 \mathcal O(1),
\\\label{eqn:838}
\Sigma_\pm(x)&=
-\frac 14\left(\frac {b_{11}}{b_{20}}
\pm\sigma\!\left(\frac {\Delta_2}{b_{20}^2}\right)\left(\frac {\Delta_2}{b_{20}^2}\right)^{\!\!\frac 12}\right)x^2
-\frac 14\left(2\,\frac {b_{10}}{b_{20}}\pm
\sigma\!\left(\frac {\Delta_2}{b_{20}^2}\right)\left(\frac {\Delta_2}{b_{20}^2}\right)^{\!\!\frac 12}
\frac {\Delta_1}{\Delta_2}\right)x+
 \mathcal O(1).\end{align}
\item
If $\Delta_2= 0$ and $\Delta_1\ne 0$,\footnote{\;Observe that when $x<0$ we have $x\cdot x^{\frac 12}=-\mathrm e^{\mathrm i\frac \pi 2}(-x)^{\frac 32}=
\mathrm e^{\mathrm i\frac {3\pi} 2}(-x)^{\frac 32}=x^{\frac 32}$.}
\begin{align}\notag
x\Xi_\pm(x)&=
-\frac 12\frac {b_{11}}{b_{20}}\,x^2
\mp\,\frac 12\sigma\!\left(\frac{\Delta_1}{b_{20}^2}\right)\left(\frac{\Delta_1}{b_{20}^2}\right)^{\!\!\frac 12}
x^{\frac 32}
-\frac 12\frac {b_{10}}{b_{20}}\,x
\mp\,\frac 14\,\sigma\!\left(\frac{\Delta_1}{b_{20}^2}\right)\left(\frac{\Delta_1}{b_{20}^2}\right)^{\!\!\frac 12}\frac{\Delta_0}{\Delta_1}\,
x^{\frac 12}+\mathcal O\left(\abs{x}^{-\frac 12}\right),
\\\label{eqn:839}
\Sigma_\pm(x)&=
-\frac 14\frac {b_{11}}{b_{20}}\,x^2
\mp\frac 13\,\sigma\!\left(\frac{\Delta_1}{b_{20}^2}\right)\left(\frac{\Delta_1}{b_{20}^2}\right)^{\!\!\frac 12}
x^{\frac 32}
-\frac 12\frac {b_{10}}{b_{20}}\,x
\mp \frac 12\,\sigma\!\left(\frac{\Delta_1}{b_{20}^2}\right)\left(\frac{\Delta_1}{b_{20}^2}\right)^{\!\!\frac 12}\frac{\Delta_0}{\Delta_1}\,
x^{\frac 12}+\mathcal O\left(\abs{x}^{-\frac 12}\right).
\end{align}
\item
If $\Delta_2=\Delta_1=0$,
\begin{align}\notag
x\Xi_\pm(x)&=
-\frac 12\frac {b_{11}}{b_{20}}\,x^2-\frac 12\left(\frac {b_{10}}{b_{20}}
\pm \sigma\!\left(\frac {\Delta_0}{b_{20}^2}\right)\left(\frac {\Delta_0}{b_{20}^2}\right)^{\!\!\frac 12}\right)x,
\\\label{eqn:840}
\Sigma_\pm(x)&=
-\frac 14\,\frac { b_{11}}{b_{20}}\,x^2-\frac 12\left(\frac {b_{10}}{b_{20}}\,
\pm \sigma\!\left(\frac {\Delta_0}{b_{20}^2}\right)\left(\frac {\Delta_0}{b_{20}^2}\right)^{\!\!\frac 12}\right)x.
\end{align}
\end{trivlist}
From these asymptotic expansions it  follows that
\begin{itemize}
\item[(I)]
\eqref{eqn:832}, \eqref{eqn:833}, and  \eqref{eqn:834} are equivalent to
\begin{equation}\label{eqn:835}
\mathrm e^{\mathrm i \Sigma_\pm}\notin \mathcal S',
\end{equation}
 or
\begin{equation}\label{eqn:836}
\mathrm e^{\mathrm i \Sigma_-}\notin \mathcal S',\quad
\mathrm e^{\mathrm i \Sigma_+}\in \mathcal S,\quad \Delta_2\ne 0, \quad\lambda\notin\{1+2n:n\in\mathbb Z_+\},
\end{equation}
 or
\begin{equation}\label{eqn:837}
\mathrm e^{\mathrm i \Sigma_-}\notin \mathcal S',\quad
\mathrm e^{\mathrm i \Sigma_+}\in \mathcal S,\quad \Delta_2= 0.
\end{equation}
\item[(II)]
Thanks to Proposition \ref{pro:47}, global regularity is equivalent to
\begin{equation}\label{eqn:829}
\mathrm e^{\mathrm i \Sigma_\pm}\in \mathcal S\cup(\mathcal C^\infty\setminus\mathcal S').
\end{equation}
\item[(III)]
Then, if $B$ is globally regular, there are only three possible behaviors of $\mathrm e^{\mathrm i \Sigma_\pm}$:
\begin{align*}
&\mathrm e^{\mathrm i \Sigma_\pm}\notin \mathcal S',\\
&\mathrm e^{\mathrm i \Sigma_-}\notin \mathcal S',\quad \mathrm e^{\mathrm i \Sigma_+}\in \mathcal S,\\
&\mathrm e^{\mathrm i \Sigma_\pm}\in \mathcal S.
\end{align*}
\end{itemize}

Since \eqref{eqn:835}, \eqref{eqn:836}, and \eqref{eqn:837} imply \eqref{eqn:829}, we have only to
show that
\begin{enumerate}
\item\label{itm:32} $\mathrm e^{\mathrm i\Sigma_\pm}\notin\mathcal S'$ $\implies u\notin \mathcal S'$,
\item\label{itm:33} $\mathrm e^{\mathrm i\Sigma_-}\notin\mathcal S'$,
$\mathrm e^{\mathrm i\Sigma_+}\in\mathcal S$,
$\Delta_2\ne 0$, and $\lambda\notin\{1+2n:n\in\mathbb Z_+\}$ $\implies u\notin \mathcal S'$,
\item\label{itm:34} $\mathrm e^{\mathrm i\Sigma_-}\notin\mathcal S'$,
$\mathrm e^{\mathrm i\Sigma_+}\in\mathcal S$,
$\Delta_2\ne 0$, and $\lambda\in\{1+2n:n\in\mathbb Z_+\}$  $\implies u\in \mathcal S$,
\item\label{itm:47} $\mathrm e^{\mathrm i\Sigma_-}\notin\mathcal S'$,
$\mathrm e^{\mathrm i\Sigma_+}\in\mathcal S$,
$\Delta_2= 0$, $\implies u\notin \mathcal S'$,
\item\label{itm:35} $\mathrm e^{\mathrm i\Sigma_\pm}\in\mathcal S$ $\implies u\in \mathcal S$,
\end{enumerate}
where
\begin{equation*}
u=c_1u_1+c_2u_2,
\end{equation*}
$u_1$, and $u_2$ are as in Propositions \ref{pro:50}, \ref{pro:61}, and \ref{pro:51}, and formula \eqref{eqn:819}, and  $\abs{c_1}+\abs{c_2}>0$.

Since all assumptions in \eqref{itm:32}--\eqref{itm:35} imply that $B$ is globally regular, we have that
\begin{equation*}
u\in\mathcal S \iff \lim_{\abs{x}\to\infty}u(x)=0.
\end{equation*}
At last implications \eqref{itm:32}--\eqref{itm:35} follow by computing the limit of $u$ as $\abs{x}\to \infty$ by making use of the asymptotic expansions \eqref{eqn:838}, \eqref{eqn:839}, and \eqref{eqn:840}, and   Propositions \ref{pro:50}, \ref{pro:61}, and \ref{pro:51}, and formula \eqref{eqn:819}.

We leave the details to the reader.
\end{proof}

\bigskip\noindent\textbf{Proof of Theorem \ref{thm:16}}
It follows from Theorem \ref{thm:21}, Lemma \ref{lem:11},  Proposition \ref{pro:49},  and Theorem \ref{thm:32}.
\qed

\section{Asymptotic expansions of functions $\Phi$, and $\Theta$}\label{sec:5}

\subsection{Lemmas on Gamma Function.}

The \emph{Euler Gamma Function} is defined by
\begin{equation*}
\Gamma(z)=\int_0^\infty t^{z-1}\mathrm e ^{-t}\,\mathrm d t,\qquad \text{\upshape for $\Re z>0$}.
\end{equation*}
This function can be extended to a meromorphic function with simple pole at every $k\in\mathbb Z_-$, by the formula (see \cite[1.1]{Lebedev}):
\begin{equation*}
\Gamma(z)=\sum_{k=0}^\infty\frac {(-1)^k}{k!}\,\frac 1{z+k}+\int_1^\infty t^{z-1}\mathrm e^{-t}\,\mathrm d t.
\end{equation*}

\begin{lemma}\label{lem:9}
Given two complex numbers $p$ and $q$ such that $\Re p>0$, and $\Re q>0$, we have
\begin{equation}\label{eqn:6}
\mathrm e^{\mathrm i\theta p}\int_0^{\infty} \frac {t^{p-1}}{(1+\mathrm e^{\mathrm i \theta} t) ^{p+q}}\,\mathrm d t=
\frac {\Gamma(p)\Gamma(q)}{\Gamma(p+q)},\qquad \text{\upshape for all $\theta\in\mathbb R$}.
\end{equation}
\end{lemma}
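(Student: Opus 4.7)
The plan is to reduce the identity to the classical Beta integral
\[
\int_0^\infty \frac{t^{p-1}}{(1+t)^{p+q}}\,\mathrm d t = \frac{\Gamma(p)\Gamma(q)}{\Gamma(p+q)},\qquad \Re p,\Re q>0,
\]
by contour deformation from the ray $L_\theta=\{t\mathrm e^{\mathrm i\theta}:t>0\}$ back to the positive real axis. It is enough to treat $\theta\in(-\pi,\pi)$, where $\mathrm e^{\mathrm i\theta}\ne -1$ and the integrand on the left of \eqref{eqn:6} is non-singular on $(0,\infty)$; the statement for other real $\theta$ is taken with respect to the branch convention of \eqref{eqn:448}.

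I introduce the auxiliary function $f(z)=z^{p-1}(1+z)^{-(p+q)}$, where both powers use the principal branch from \eqref{eqn:448}. Since $z^{p-1}$ has its branch cut on $(-\infty,0]$ and $(1+z)^{-(p+q)}$ on $(-\infty,-1]$, the product $f$ is holomorphic on $\mathbb C\setminus(-\infty,0]$, and in particular on the closed sector with vertices $r$, $R$, $R\mathrm e^{\mathrm i\theta}$, $r\mathrm e^{\mathrm i\theta}$ (for any $0<r<R$) whenever $|\theta|<\pi$. Applying Cauchy's theorem on the boundary of this sector---the segment $[r,R]$, the arc $|z|=R$ from argument $0$ to $\theta$, the ray segment from $R\mathrm e^{\mathrm i\theta}$ to $r\mathrm e^{\mathrm i\theta}$ along $L_\theta$, and the arc $|z|=r$ closing the loop---yields an identity among these four contributions.

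Next I pass to the limits $r\to 0^+$ and $R\to\infty$, verifying that the two arc pieces vanish: on the outer arc, $|f(z)|\le C_\theta R^{-\Re q-1}$ gives $\mathcal O(R^{-\Re q})\to 0$ since $\Re q>0$, and on the inner arc, $|f(z)|\le C_\theta r^{\Re p-1}$ gives $\mathcal O(r^{\Re p})\to 0$ since $\Re p>0$. Parameterizing the ray piece by $z=t\mathrm e^{\mathrm i\theta}$ and using the principal-branch identity $(t\mathrm e^{\mathrm i\theta})^{p-1}=t^{p-1}\mathrm e^{\mathrm i\theta(p-1)}$ (valid for $|\theta|<\pi$ because $\Arg(t\mathrm e^{\mathrm i\theta})=\theta$ when $t>0$), Cauchy's relation becomes
\[
\int_0^\infty \frac{t^{p-1}}{(1+t)^{p+q}}\,\mathrm d t = \mathrm e^{\mathrm i\theta p}\int_0^\infty \frac{t^{p-1}}{(1+\mathrm e^{\mathrm i\theta}t)^{p+q}}\,\mathrm d t,
\]
which combined with the Beta integral above is exactly \eqref{eqn:6}.

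The only technical obstacle, routine in nature, is the bookkeeping of branch cuts; this is handled once and for all by observing that throughout the deformation the contour remains inside $\mathbb C\setminus(-\infty,0]$, where $f$ is holomorphic. I do not expect any subtler difficulty.
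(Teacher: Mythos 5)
Your proof is correct, but it takes a different route from the paper's. You rotate the ray of integration via Cauchy's theorem on the sector bounded by $[r,R]$, the ray $L_\theta$, and two arcs, checking that the arc contributions vanish because $\Re p>0$ (inner arc) and $\Re q>0$ (outer arc); the branch bookkeeping for $z^{p-1}$ and $(1+z)^{-(p+q)}$ on $\mathbb C\setminus(-\infty,0]$ is handled correctly, as is the identity $(t\mathrm e^{\mathrm i\theta})^{p-1}=t^{p-1}\mathrm e^{\mathrm i\theta(p-1)}$ for $t>0$ and $\abs{\theta}<\pi$. The paper instead stays entirely on the real line: it differentiates the left-hand side of \eqref{eqn:6} with respect to $\theta$ and shows the derivative vanishes by a single integration by parts in $t$, then evaluates at $\theta=0$ where the Beta integral appears. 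The paper's argument buys freedom from contour geometry and branch cuts (only differentiation under the integral sign needs justifying), while yours is the standard complex-analytic ray-rotation and makes the mechanism more transparent. One caveat applies equally to both proofs: the identity as stated ``for all $\theta\in\mathbb R$'' is really only established (and only meaningful) for $\abs{\theta}<\pi$ --- at odd multiples of $\pi$ the integral diverges at $t=1$, and for $\abs{\theta}>\pi$ the integral is $2\pi$-periodic in $\theta$ while $\mathrm e^{\mathrm i\theta p}$ is not, so the constancy argument cannot be continued across $\theta=\pm\pi$. Since the lemma is only invoked with $\theta\in\{-\frac\pi2,0,\frac\pi2\}$, this does not affect the paper, and your explicit restriction to $(-\pi,\pi)$ is in fact the honest range of validity.
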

\begin{proof}
Since (\cite[(1.5.3) and (1.5.6)]{Lebedev})
\begin{equation*}
\int_0^{\infty} \frac {t^{p-1}}{(1+t)^{p+q}}\,\mathrm d t=B(p,q)=\frac {\Gamma(p)\Gamma(q)}{\Gamma(p+q)},
\end{equation*}
where $B$ is the \emph{Euler Beta Function}, it suffices to show that the left-hand side of \eqref{eqn:6} is constant with respect to $\theta$.
But this follows from
\begin{multline*}
\frac {\mathrm d}{\mathrm d \theta}\left\{\mathrm e^{\mathrm i\theta p}\int_0^{\infty} \frac {t^{p-1}}{(1+\mathrm e^{\mathrm i \theta} t) ^{p+q}}\,\mathrm d t\right\}=
\\
=\mathrm i p \mathrm e^{\mathrm i\theta p}\int_0^{\infty} \frac {t^{p-1}}{(1+\mathrm e^{\mathrm i \theta} t) ^{p+q}}\,\mathrm d t
-(p+q)\mathrm i \mathrm e^{\mathrm i \theta (p+1)}\int_0^{\infty} \frac {t^{p}}{(1+\mathrm e^{\mathrm i \theta} t) ^{p+q+1}}\,\mathrm d t
=0,
\end{multline*}
because
\begin{equation*}\begin{split}
p \int_0^{\infty} \frac {t^{p-1}}{(1+\mathrm e^{\mathrm i \theta} t) ^{p+q}}\,\mathrm d t&=
\left[\frac {t^{p}}{(1+\mathrm e^{\mathrm i \theta} t) ^{p+q}}\right]_{t=0}^{t=\infty}
-\int_0^{\infty}t^{p} \frac{\mathrm d}{\mathrm d t} (1+\mathrm e^{\mathrm i \theta} t) ^{-(p+q)}\,\mathrm d t
\\&=
(p+q)\mathrm e^{\mathrm i \theta}\int_0^{\infty} \frac {t^{p}}{(1+\mathrm e^{\mathrm i \theta} t) ^{p+q+1}}\,\mathrm d t. \qedhere
\end{split}\end{equation*}
\end{proof}

\begin{lemma}\label{lem:10}
If $\Re z>0$,
and $\Re p>0$, we have
\begin{equation}\label{eqn:7}
z^p
\int_0^{\infty} t^{p-1}\mathrm e^{-t z}\,\mathrm d t=\Gamma (p).
\end{equation}
\end{lemma}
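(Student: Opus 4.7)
The plan is to mirror closely the strategy used in the proof of Lemma \ref{lem:9}. Writing $z = \abs{z}\mathrm e^{\mathrm i\theta}$ with $\theta=\Arg z\in(-\pi/2,\pi/2)$ (which is exactly the condition $\Re z>0$), after the rescaling $s=\abs z t$ the identity to be proved becomes
\begin{equation*}
\mathrm e^{\mathrm i\theta p}\int_0^\infty s^{p-1}\mathrm e^{-s\mathrm e^{\mathrm i\theta}}\,\mathrm d s=\Gamma(p),
\end{equation*}
which no longer involves $\abs z$. Hence it suffices to show that the left-hand side is independent of $\theta\in(-\pi/2,\pi/2)$ and reduces to $\Gamma(p)$ at $\theta=0$, where the definition of $\Gamma$ gives exactly the desired value.

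To prove $\theta$-independence, I would differentiate under the integral sign, just as in Lemma \ref{lem:9}:
\begin{equation*}
\frac{\mathrm d}{\mathrm d\theta}\left\{\mathrm e^{\mathrm i\theta p}\int_0^\infty s^{p-1}\mathrm e^{-s\mathrm e^{\mathrm i\theta}}\,\mathrm d s\right\}
=\mathrm i p\,\mathrm e^{\mathrm i\theta p}\int_0^\infty s^{p-1}\mathrm e^{-s\mathrm e^{\mathrm i\theta}}\,\mathrm d s
-\mathrm i\mathrm e^{\mathrm i\theta(p+1)}\int_0^\infty s^p\mathrm e^{-s\mathrm e^{\mathrm i\theta}}\,\mathrm d s,
\end{equation*}
and then verify that the bracket vanishes via the integration by parts
\begin{equation*}
p\int_0^\infty s^{p-1}\mathrm e^{-s\mathrm e^{\mathrm i\theta}}\,\mathrm d s
=\bigl[s^p\mathrm e^{-s\mathrm e^{\mathrm i\theta}}\bigr]_0^\infty+\mathrm e^{\mathrm i\theta}\int_0^\infty s^p\mathrm e^{-s\mathrm e^{\mathrm i\theta}}\,\mathrm d s.
\end{equation*}

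The routine but essential point, and the only mild obstacle, is justifying the vanishing of the boundary term and the interchange of differentiation and integration. Both rest on the same estimate: for $\abs{\theta}<\pi/2$ one has $\Re(s\mathrm e^{\mathrm i\theta})=s\cos\theta$, and since $\cos\theta>0$ the integrand decays like $s^{\Re p}\mathrm e^{-s\cos\theta}$ at $+\infty$, making the boundary contribution at $s=+\infty$ zero; the boundary contribution at $s=0$ vanishes because $\Re p>0$. Uniform bounds of the same shape on compact $\theta$-intervals inside $(-\pi/2,\pi/2)$ legitimize the differentiation under the integral sign by dominated convergence. Once these are in place, the derivative is identically zero, and evaluation at $\theta=0$ closes the argument.
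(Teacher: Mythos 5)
Your proof is correct and follows essentially the same strategy as the paper: show the left-hand side of \eqref{eqn:7} is constant by computing a derivative and killing it with the integration by parts $p\int_0^\infty s^{p-1}\mathrm e^{-s\mathrm e^{\mathrm i\theta}}\,\mathrm d s=\mathrm e^{\mathrm i\theta}\int_0^\infty s^{p}\mathrm e^{-s\mathrm e^{\mathrm i\theta}}\,\mathrm d s$, then evaluate at a base point where the integral is $\Gamma(p)$ by definition. The only (harmless) difference is that you first normalize $\abs z$ and differentiate in the polar angle $\theta$, exactly as in Lemma \ref{lem:9}, whereas the paper differentiates directly in the complex variable $z$ and uses analyticity of the left-hand side on $\{\Re z>0\}$.
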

\begin{proof}
Since $\Re z>0$, the left-hand side of \eqref{eqn:7} is analytic. Let

 Differentiate the left-end side of \eqref{eqn:7}:
\begin{equation}\label{eqn:617}
\frac {\mathrm d}{\mathrm d z}\left\{z^ p
\int_0^{\infty} t^{p-1}\mathrm e^{-z t}\,\mathrm d t\right\}=
p z^{p-1}
\int_0^{\infty} t^{p-1}\mathrm e^{-z t}\,\mathrm d t
-z^p\int_0^{\infty} t^{p}\mathrm e^{-z t}\,\mathrm d t.
\end{equation}
Since $\Re p>0$, an integration by parts yields:
\begin{equation}\label{eqn:66}
p z^{p-1}
\int_0^{\infty} t^{p-1}\mathrm e^{-z t}\,\mathrm d t
=z^p\int_0^{\infty} t^{p}\mathrm e^{-z t}\,\mathrm d t.
\end{equation}
Then \eqref{eqn:617}, and \eqref{eqn:66} imply
\begin{equation*}
\frac {\mathrm d}{\mathrm d z}\left\{z^ p
\int_0^{\infty} t^{p-1}\mathrm e^{-z t}\,\mathrm d t\right\}=0,
\end{equation*}
that is that the left-end  side of \eqref{eqn:7} is constant with respect to $z$.
It follows that
\begin{equation*}
z^ p
\int_0^{\infty} t^{p-1}\mathrm e^{-z t}\,\mathrm d t
=\int_0^{\infty} t^{p-1}\mathrm e^{-t}\,\mathrm d t=\Gamma (p).  \qedhere
\end{equation*}
\end{proof}

\begin{lemma}\label{lem:8}
Let  $\Re p>0$ and $0<\epsilon<\frac \pi 2$. Then
\begin{multline}\label{eqn:13}
\int_0^1 t^{p-1}\,\mathrm e^{-t z}\,\mathrm d t=z^{-p}\left\{\Gamma(p)
+\mathcal O\left(\abs{z}^{\Re p-1}\mathrm e^{-(\sin \epsilon)\abs{z}}\right)\right\},\qquad
\text{\upshape for $\abs{z}\to \infty$, and $\abs{\Arg z}\le \frac \pi 2-\epsilon$}.
\end{multline}
\end{lemma}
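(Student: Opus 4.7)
The plan is to reduce the partial integral $\int_0^1$ to the full integral $\int_0^\infty$, which Lemma \ref{lem:10} already identifies with $z^{-p}\Gamma(p)$, and then control the missing tail $\int_1^\infty$ by a direct exponential estimate that uses the hypothesis on $\Arg z$.

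First I would note that the hypothesis $\abs{\Arg z}\le \pi/2-\epsilon$ forces $\Re z=\abs{z}\cos(\Arg z)\ge\abs{z}\sin\epsilon>0$, so Lemma \ref{lem:10} applies and gives
\begin{equation*}
z^{-p}\Gamma(p)=\int_0^\infty t^{p-1}\mathrm e^{-tz}\,\mathrm d t=\int_0^1 t^{p-1}\mathrm e^{-tz}\,\mathrm d t+\int_1^\infty t^{p-1}\mathrm e^{-tz}\,\mathrm d t.
\end{equation*}
Thus the problem reduces to showing
\begin{equation*}
\int_1^\infty t^{p-1}\mathrm e^{-tz}\,\mathrm d t=z^{-p}\,\mathcal O\!\left(\abs{z}^{\Re p-1}\mathrm e^{-(\sin\epsilon)\abs{z}}\right),
\end{equation*}
equivalently that the tail is $\mathcal O(\abs{z}^{-1}\mathrm e^{-(\sin\epsilon)\abs{z}})$, since $\abs{z^{-p}}\cdot\abs{z}^{\Re p-1}=\abs{z}^{-1}$.

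Next, using $\abs{\mathrm e^{-tz}}=\mathrm e^{-t\Re z}\le \mathrm e^{-t\abs{z}\sin\epsilon}$ and setting $w=\abs{z}\sin\epsilon$, I would bound
\begin{equation*}
\left\lvert\int_1^\infty t^{p-1}\mathrm e^{-tz}\,\mathrm d t\right\rvert\le\int_1^\infty t^{\Re p-1}\mathrm e^{-tw}\,\mathrm d t=w^{-\Re p}\int_w^\infty s^{\Re p-1}\mathrm e^{-s}\,\mathrm d s=w^{-\Re p}\Gamma(\Re p,w),
\end{equation*}
after the substitution $s=tw$. A single integration by parts on the incomplete gamma function yields the elementary asymptotic $\Gamma(a,w)=w^{a-1}\mathrm e^{-w}(1+\mathcal O(1/w))$ as $w\to\infty$, for any fixed $a>0$. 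Substituting this back gives the tail bound $\mathcal O(w^{-1}\mathrm e^{-w})=\mathcal O(\abs{z}^{-1}\mathrm e^{-(\sin\epsilon)\abs{z}})$, which combined with the identity from Lemma \ref{lem:10} produces \eqref{eqn:13}.

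There is no genuine obstacle here; the only subtlety is making sure that the estimate on the tail is uniform as $z$ varies over the sector $\abs{\Arg z}\le\pi/2-\epsilon$, and this uniformity is precisely what the lower bound $\Re z\ge \abs{z}\sin\epsilon$ provides. Hence the whole argument is a short chain consisting of one application of Lemma \ref{lem:10}, one pointwise bound on $\abs{\mathrm e^{-tz}}$, and the standard one-step integration-by-parts asymptotic of the incomplete gamma function.
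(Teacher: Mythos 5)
Your proof is correct and follows essentially the same route as the paper: both reduce to estimating the tail $\int_1^\infty t^{p-1}\mathrm e^{-tz}\,\mathrm d t$ via Lemma \ref{lem:10} and control it with the sector bound $\Re z\ge(\sin\epsilon)\abs{z}$, the paper by integrating by parts $N+1$ times in the complex integral and you by passing to the real incomplete gamma function $\Gamma(\Re p,w)$, which amounts to the same computation. The only small imprecision is that a \emph{single} integration by parts yields $\Gamma(a,w)=w^{a-1}\mathrm e^{-w}\left(1+\mathcal O(1/w)\right)$ directly only when $a\le 2$; for larger $\Re p$ one must iterate until the exponent becomes nonpositive (precisely the paper's choice of $N$), but the asymptotic you invoke is classical and the argument goes through.
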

\begin{proof}
From \eqref{eqn:7} it follows that
\begin{equation}\label{eqn:71}
z^{p}\int_0^1 t^{p-1}\,\mathrm e^{-t z}\,\mathrm d t-\Gamma(p)
=-z^p\int_1^{\infty} t^{p-1} \mathrm e^{-t z}\,\mathrm d t.
\end{equation}
Let $N=\min\{k\in \mathbb Z_+:\Re p-1-k\le 0\}$. Integrating by parts we get
\begin{equation}\label{eqn:69}
\int_1^{\infty} t^{p-1} \mathrm e^{-t z}\,\mathrm d t=\sum_{k=0}^{N}\frac {\gamma_k}{z^k}\,\mathrm e^{-z}+
\frac{\gamma_{N+1}}{z^{N}}\int_1^{\infty} t^{p-1-N} \mathrm e^{-t z}\,\mathrm d t,
\end{equation}
where
\begin{equation}\label{eqn:5}
\gamma_k=\begin{cases} 0,&\text{\upshape if $k=0$},\\ 1,&\text{\upshape if $k=1$},\\
(p-1)(p-2)\cdots(p-k+1),&\text{\upshape if $k>1$}.\end{cases}
\end{equation}
Since $\Re p-1-N\le 0$, and $\abs{\Arg z}\le \frac \pi 2-\epsilon$, we have
\begin{equation}\label{eqn:70}
\abs{\int_1^{\infty} t^{p-1-N} \mathrm e^{-t z}\,\mathrm d t}\le
\int_1^{\infty}\mathrm e^{-t\abs{z}\cos(\Arg z)}\,\mathrm d t=
\frac {\mathrm e^{-\abs{z}\cos(\Arg z)}}{\abs{z}\cos(\Arg z)}.
\end{equation}
Since $\abs{\Arg z}\le \frac \pi 2-\epsilon$, from \eqref{eqn:69}, \eqref{eqn:5}, and \eqref{eqn:70}, it follows that
\begin{equation*}\begin{split}
\abs{z^p\int_1^{\infty} t^{p-1} \mathrm e^{-t z}\,\mathrm d t}
&\le\sum_{k=0}^{N}\abs{\gamma_k} \abs{z^{p-k}}\abs{\mathrm e^{-z}}+
\abs{\gamma_{N+1}}\abs{z^{p-N}}\frac {\mathrm e^{-\abs{z}\cos(\Arg z)}}{\abs{z}\cos(\Arg z)},
\\&\le\left(\sum_{k=0}^{N+1}\abs{\gamma_k}\right)\mathrm e^{-\Im p \Arg z} \abs{z}^{\Re p-1}\frac {\mathrm e^{-\abs{z}\cos(\Arg z)}}{\cos(\Arg z)}
\\&\le\left(\sum_{k=1}^{N+1}\abs{\gamma_k}\right) \mathrm e^{\frac {\pi\abs{\Im p}}2}\abs{z}^{\Re p-1}\frac {\mathrm e^{-(\sin \epsilon)\abs{z}}}{\sin\epsilon},\qquad\text{\upshape for $\abs{z}\ge 1$}.
\end{split}\end{equation*}
This inequality together with \eqref{eqn:71} implies \eqref{eqn:13}.
\end{proof}

\subsection{Asymptotic behavior of $\Phi$.}

 \begin{proposition}
We have the following integral representation:
\begin{equation}
\label{eqn:94}
\Phi(p,q;z)=\frac{\Gamma(q)}{\Gamma(p)\Gamma(q-p)}\,\mathrm e^z\int_0^1 (1-t)^{p-1} t^{q-p-1}\mathrm e^{-t z}\,\mathrm d t,
\qquad\text{\upshape for $\Re q>\Re p>0$}.
\end{equation}
\end{proposition}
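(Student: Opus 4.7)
\medskip\noindent\textbf{Proof plan.} The natural approach is to start with the right-hand side of \eqref{eqn:94} and expand the exponential so as to recognize the Beta integrals term by term, then match with the defining series \eqref{eqn:362} of $\Phi$. Concretely, I would first absorb the $\mathrm e^z$ into the integral, writing
\begin{equation*}
\mathrm e^z\int_0^1 (1-t)^{p-1} t^{q-p-1}\mathrm e^{-t z}\,\mathrm d t=\int_0^1(1-t)^{p-1}t^{q-p-1}\mathrm e^{(1-t)z}\,\mathrm d t,
\end{equation*}
and then expand $\mathrm e^{(1-t)z}=\sum_{k=0}^\infty (1-t)^k z^k/k!$ as a uniformly convergent power series on the compact interval $[0,1]$.

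The next step is to interchange the sum and the integral. Thanks to the hypothesis $\Re p>0$ and $\Re(q-p)>0$, the function $(1-t)^{p+k-1}t^{q-p-1}$ is integrable on $[0,1]$ for every $k\ge 0$, and the partial sums are dominated uniformly in $t\in[0,1]$ by $\sum_k|1-t|^{k}|z|^k/k!\le \mathrm e^{|z|}$, so dominated convergence (applied against the integrable weight $(1-t)^{\Re p-1}t^{\Re q-\Re p-1}$) legitimizes the exchange. This yields
\begin{equation*}
\mathrm e^z\int_0^1 (1-t)^{p-1} t^{q-p-1}\mathrm e^{-t z}\,\mathrm d t=\sum_{k=0}^\infty\frac{z^k}{k!}\int_0^1 (1-t)^{p+k-1}t^{q-p-1}\,\mathrm d t.
\end{equation*}

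Then I would recognize each integral as a value of the Euler Beta function and use the identity $B(p+k,q-p)=\Gamma(p+k)\Gamma(q-p)/\Gamma(q+k)$, together with the Pochhammer relations $\Gamma(p+k)=(p)_k\Gamma(p)$ and $\Gamma(q+k)=(q)_k\Gamma(q)$ from \eqref{eqn:495}, to rewrite the series as
\begin{equation*}
\frac{\Gamma(p)\Gamma(q-p)}{\Gamma(q)}\sum_{k=0}^\infty\frac{(p)_k}{k!\,(q)_k}\,z^k=\frac{\Gamma(p)\Gamma(q-p)}{\Gamma(q)}\,\Phi(p,q;z),
\end{equation*}
by the definition \eqref{eqn:362}. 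Solving for $\Phi(p,q;z)$ gives \eqref{eqn:94}.

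The only genuinely delicate point is the justification of the interchange of summation and integration, but under the stated hypotheses $\Re q>\Re p>0$ this is routine via dominated convergence. Note also that although the computation is presented for a specific $z\in\mathbb C$, both sides are entire in $z$ (the left by \eqref{eqn:362}, the right because the integral depends analytically on $z$), so the identity, once established pointwise, extends automatically to all $z\in\mathbb C$.
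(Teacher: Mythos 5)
Your proof is correct and is essentially the paper's own argument run in the opposite direction: the paper starts from the series \eqref{eqn:362}, writes $(p)_n/(q)_n$ as the Beta integral $B(p+n,q-p)$ and sums under the integral sign, whereas you expand the exponential inside the integral and integrate term by term to recover the same Beta integrals. The key identity and the interchange of sum and integral are the same in both versions; your explicit dominated-convergence justification is a welcome extra, but the substance is identical.
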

\begin{proof}
We have (see \cite[(1.5.2), and (1.5.6)]{Lebedev})
\begin{equation*}\begin{aligned}[t]
\frac {(p)_n}{(q)_n}&=\frac {\Gamma(p+n)\Gamma(q)}{\Gamma(p)\Gamma(q+n)}=
\frac {\Gamma(p+n)\Gamma(q-p)}{\Gamma(q+n)}\,\frac{\Gamma(q)}{\Gamma(p)\Gamma(q-p)}
\\&=\frac{\Gamma(q)}{\Gamma(p)\Gamma(q-p)}\,B(p+n,q-p)=
\frac{\Gamma(q)}{\Gamma(p)\Gamma(q-p)}\int_0^1 s^{p+n-1}(1-s)^{q-p-1}\,\mathrm d s,
\end{aligned}\end{equation*}
Thus, from \eqref{eqn:362} we obtain
\begin{equation*}\begin{split}
\Phi(p,q;z)&=\sum_{n=0}^{\infty} \frac {(p)_n}{n!(q)_n}\,z^n=
\frac{\Gamma(q)}{\Gamma(p)\Gamma(q-p)}\sum_{n=0}^{\infty} \int_0^1 s^{p-1}(1-s)^{q-p-1}\,\frac {(s z)^n}{n!}\,\mathrm d s
\\&=\frac{\Gamma(q)}{\Gamma(p)\Gamma(q-p)}\int_0^1 s^{p-1}(1-s)^{q-p-1}\,\mathrm e^{s z}\,\mathrm d s
=\frac{\Gamma(q)}{\Gamma(p)\Gamma(q-p)}\,
\mathrm e^z\int_0^1 (1-t)^{p-1} t^{q-p-1}\mathrm e^{-t z}\,\mathrm d t. \qedhere
\end{split}\end{equation*}
\end{proof}

\begin{proposition}[Kummer identity] \label{pro:58}For all $q\notin \mathbb Z_-$ we have
\begin{equation}\label{eqn:12}
\Phi(p,q;z)=\mathrm e^{z}\Phi(q-p,q;-z).
\end{equation}
\end{proposition}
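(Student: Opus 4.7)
My plan is to derive the Kummer identity \eqref{eqn:12} first on the region $\Re q>\Re p>0$ by manipulating the integral representation \eqref{eqn:94}, and then to extend it to every admissible pair $(p,q)$ by two applications of the identity principle.

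For the first step, starting from \eqref{eqn:94} I apply the change of variable $s=1-t$, which sends $[0,1]$ to itself and turns
\begin{equation*}
\Phi(p,q;z)=\frac{\Gamma(q)}{\Gamma(p)\Gamma(q-p)}\,\mathrm e^z\int_0^1(1-t)^{p-1}t^{q-p-1}\mathrm e^{-tz}\,\mathrm dt
\end{equation*}
into
\begin{equation*}
\Phi(p,q;z)=\frac{\Gamma(q)}{\Gamma(p)\Gamma(q-p)}\int_0^1 s^{p-1}(1-s)^{q-p-1}\mathrm e^{sz}\,\mathrm ds,
\end{equation*}
the factor $\mathrm e^z$ being absorbed because $\mathrm e^z\,\mathrm e^{-(1-s)z}=\mathrm e^{sz}$.

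Next I apply \eqref{eqn:94} with $p$ replaced by $q-p$ and $z$ replaced by $-z$; the required hypothesis $\Re q>\Re(q-p)>0$ reduces to the same condition $\Re q>\Re p>0$, and multiplying by $\mathrm e^z$ yields
\begin{equation*}
\mathrm e^z\,\Phi(q-p,q;-z)=\frac{\Gamma(q)}{\Gamma(p)\Gamma(q-p)}\int_0^1 t^{p-1}(1-t)^{q-p-1}\mathrm e^{tz}\,\mathrm dt,
\end{equation*}
which is identical to the rewritten expression for $\Phi(p,q;z)$ found above. Hence \eqref{eqn:12} holds throughout $\{\Re q>\Re p>0\}$.

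The extension proceeds by analytic continuation. Fix $z\in\mathbb C$ and $q$ with $\Re q$ sufficiently large: since $(p)_k$ is a polynomial of degree $k$ in $p$ and the series \eqref{eqn:362} converges locally uniformly, both sides of \eqref{eqn:12} are entire in $p$; they agree on the nonempty strip $\{0<\Re p<\Re q\}$, and therefore on all of $\mathbb C$. A second identity-principle argument in $q$, using that both sides are meromorphic in $q$ with singularities contained in $\mathbb Z_-$, then produces the identity for every $q\notin\mathbb Z_-$. No essential obstacle is anticipated; the only subtlety is to track the cancellation between the apparent poles of $\Gamma(p)\Gamma(q-p)$ and those of $(q)_k$ so as to confirm that both sides really are meromorphic precisely on $\mathbb C\setminus\mathbb Z_-$, but this is immediate from the series \eqref{eqn:362}.
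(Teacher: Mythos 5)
Your proof is correct and follows essentially the same route as the paper: the substitution $t\mapsto 1-t$ in the integral representation \eqref{eqn:94}, recognition of the resulting integral as $\mathrm e^z\Phi(q-p,q;-z)$ via \eqref{eqn:94} again, and analytic continuation in $p$ and $q$ to remove the restriction $\Re q>\Re p>0$. Your version merely spells out the continuation step (holomorphy in $p$ and $q$ from the series \eqref{eqn:362} and two applications of the identity principle) in more detail than the paper, which simply invokes "analytic continuity".
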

\begin{proof}Assume $\Re q>\Re p>0$, and put $t=1-s$ in the right hand side of \eqref{eqn:94}. We get
\begin{equation*}
\Phi(p,q;z)=\frac{\Gamma(q)}{\Gamma(p)\Gamma(q-p)}\int_0^1 s^{p-1} (1-s)^{q-p-1}\mathrm e^{s z}\,\mathrm d s.
\end{equation*}
Then using again \eqref{eqn:94} we have
\begin{equation*}
\frac{\Gamma(q)}{\Gamma(p)\Gamma(q-p)}\int_0^1 s^{p-1} (1-s)^{q-p-1}\mathrm e^{s z}\,\mathrm d s=
\mathrm e^z \Phi(q-p,q,-z).
\end{equation*}
This proves \eqref{eqn:12} under the additional hypothesis $\Re q>\Re p>0$. However by analytic continuity with respect to $p$ and $q$, \eqref{eqn:12} is true for all $p\in \mathbb C$, and $q\in\mathbb C\setminus \mathbb Z_-$.\end{proof}

\begin{theorem}\label{thm:31}
Let  $0<\epsilon<\pi/2$, $p\in\mathbb C$, and $q \in \mathbb C\setminus \mathbb Z_-$. For all   $N\in\mathbb Z_+$, we have the following asymptotic expansions for $\abs{z}\to\infty$.
\begin{equation}\label{eqn:10}
\Phi(p,q;z)=\mathrm e^z
z^{p-q}
\left\{\frac {\Gamma(q)}{\Gamma(p)}\sum_{k=0}^N\frac {(q-p)_k(1-p)_k}{k!}\, z^{-k}+\mathcal O\left(\abs{z}^{-N-1}\right)\right\},
\qquad\text{\upshape for  $\abs{\Arg z}\le \frac {\pi}{2}-\epsilon$}.
\end{equation}
\end{theorem}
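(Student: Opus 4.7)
The strategy is to establish the expansion first in the strip $0<\Re p<\Re q$, where the Euler integral representation \eqref{eqn:94} is directly applicable, and then extend to arbitrary $p\in\mathbb{C}$ and $q\in\mathbb{C}\setminus\mathbb{Z}_-$ by analytic continuation via the contiguous relations of $\Phi$.

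Under the restriction $\Re q>\Re p>0$, identity \eqref{eqn:94} gives
$$e^{-z}\Phi(p,q;z)=\frac{\Gamma(q)}{\Gamma(p)\Gamma(q-p)}\int_0^1 (1-t)^{p-1}t^{q-p-1}e^{-tz}\,dt.$$
I would Taylor-expand the regular factor at $t=0$,
$$(1-t)^{p-1}=\sum_{k=0}^N \frac{(1-p)_k}{k!}\,t^k + R_N(t),$$
with $|R_N(t)|\le C_{N,p}\,t^{N+1}$ for $t\in[0,1/2]$, obtained from the integral form of Taylor's remainder together with the uniform bound $(1-s)^{\Re p-N-2}\le 2^{N+2-\Re p}$ on $s\in[0,1/2]$. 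Splitting the integral as $\int_0^{1/2}+\int_{1/2}^1$, the second piece contributes at most $e^{-(\sin\epsilon)|z|/2}$, hence is exponentially small uniformly on the sector, because $|e^{-tz}|\le e^{-t|z|\sin\epsilon}$ whenever $|\Arg z|\le \tfrac{\pi}{2}-\epsilon$ and $(1-t)^{p-1}t^{q-p-1}$ is integrable on $[1/2,1]$.

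Next, for each main term I would apply Lemma \ref{lem:8} with parameter $q-p+k$ (permissible since $\Re(q-p+k)>0$), after adding and subtracting the exponentially small tail $\int_{1/2}^1 t^{q-p+k-1}e^{-tz}\,dt$. This yields
$$\int_0^1 t^{q-p+k-1}e^{-tz}\,dt = z^{-(q-p+k)}\Gamma(q-p+k)+\mathcal{O}\!\left(e^{-(\sin\epsilon)|z|}\right).$$
The remainder contribution is controlled by
$$\left|\int_0^{1/2}R_N(t)\,t^{q-p-1}e^{-tz}\,dt\right|\le C_N\int_0^\infty t^{\Re(q-p)+N}e^{-t|z|\sin\epsilon}\,dt = \mathcal{O}\!\left(|z|^{-\Re(q-p)-N-1}\right),$$
which, since $|z^{-(q-p)}|\asymp |z|^{-\Re(q-p)}$ uniformly on the sector, is exactly $z^{-(q-p)}\cdot\mathcal{O}(|z|^{-N-1})$. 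Using $\Gamma(q-p+k)=\Gamma(q-p)(q-p)_k$ to cancel $\Gamma(q-p)$ in the prefactor leaves the coefficient $\Gamma(q)/\Gamma(p)$ and the Pochhammer structure $(q-p)_k(1-p)_k/k!$ predicted by \eqref{eqn:10}.

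The final step, which I expect to be the principal technical obstacle, is removing the parameter restriction $0<\Re p<\Re q$. I would invoke the contiguous relations
$$(q-p)\Phi(p-1,q;z)+(2p-q+z)\Phi(p,q;z)-p\,\Phi(p+1,q;z)=0$$
and the analogous shift in $q$, which allow translation of $(p,q)$ by an integer into the strip. Since the main terms in \eqref{eqn:10} are entire in $p$ and meromorphic in $q$ with poles only at $\mathbb{Z}_-$, substituting the already-established expansion for the translated parameters and collecting terms reproduces the same form of expansion at $(p,q)$; the delicate part is verifying that the degree reductions in the recurrence cancel correctly so that the leading coefficients in the shifted expansion match the claimed Pochhammer products, and that the error $\mathcal{O}(|z|^{-N-1})$ survives the linear combinations with polynomial coefficients. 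Alternatively one can argue by meromorphic continuation of the finite-order difference $\Phi(p,q;z)-e^{z}z^{p-q}\frac{\Gamma(q)}{\Gamma(p)}\sum_{k=0}^N (q-p)_k(1-p)_k z^{-k}/k!$ viewed as a function of $(p,q)$, deducing the uniform bound outside the strip from the bound inside via Phragm\'en--Lindel\"of-type arguments.
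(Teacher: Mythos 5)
Your plan follows the paper's proof almost step for step: establish \eqref{eqn:10} in a half-plane of parameters from the Euler integral \eqref{eqn:94} by binomially expanding $(1-t)^{p-1}$, estimate each moment $\int_0^1 t^{q-p+k-1}\mathrm e^{-tz}\,\mathrm dt$ via Lemma \ref{lem:8}, and then remove the parameter restriction with contiguous relations. One genuine (small) improvement in your version: by splitting the integral at $t=\tfrac12$ and bounding the Taylor remainder only there, you get the initial expansion for $0<\Re p<\Re q$, whereas the paper bounds $\int_0^1(1-s)^N(1-st)^{p-N-2}\,\mathrm ds$ over the whole interval and therefore needs $\Re p>1$ to start (and a small extra argument to drop the constraint $N\ge\Re p-2$). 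This buys you slightly less iteration in the continuation step.

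The weak point is exactly the step you flag as the principal obstacle. The paper does not leave it to cancellation miracles: it uses the specific recurrences $\Phi(p,q;z)=\frac{q+z}{q}\Phi(p,q+1;z)-\frac{(q+1-p)z}{q(q+1)}\Phi(p,q+2;z)$ and $\Phi(p,q;z)=\Phi(p+1,q;z)-\frac zq\Phi(p+1,q+1;z)$, for which the top-order terms cancel to exactly one order and the coefficients telescope cleanly via $(q+1-p)_k(1-p)_k/k!-(q+1-p)_{k-1}(1-p)_k/(k-1)!=(q-p)_k(1-p)_k/k!$. With your three-term relation in $p$ you must solve for $\Phi(p-1,q;z)$, divide by $q-p$ (which can vanish), and verify a two-order cancellation, so you would need the shifted expansions to order $N+2$; this is doable but strictly more work, and it is the part of the proof that actually has content. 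Your proposed alternative is not viable: the quantity $\Phi(p,q;z)-\mathrm e^zz^{p-q}\frac{\Gamma(q)}{\Gamma(p)}\sum_{k\le N}(q-p)_k(1-p)_kz^{-k}/k!$ is indeed analytic in $(p,q)$ for fixed $z$, but an estimate $\mathcal O(|z|^{\Re(p-q)-N-1})$ valid on a strip of parameters does not propagate to other parameters by analytic continuation or by Phragm\'en--Lindel\"of in $(p,q)$ --- the bound one needs is in $z$, uniformly as the parameters move, and nothing in that argument controls it. So the recurrence computation cannot be circumvented; it has to be carried out as in the paper.
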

\begin{proof}
Assume
\begin{equation}\label{eqn:11}
\Re q>\Re p>1.
\end{equation}
Using  the binomial expansion and the identity
\begin{equation*}
(-1)^k\binom{p-1}{k}=\frac {(1-p)_k}{k!},\qquad\text{\upshape for all $k\in\mathbb Z_+$},
\end{equation*}
we obtain
\begin{multline}\label{eqn:16}
\int_0^1 (1-t)^{p-1} t^{q-p-1}\mathrm e^{-t z}\,\mathrm d t
=\sum_{k=0}^N \frac {(1-p)_k}{k!} \int_0^1  t^{k+q-p-1}\mathrm e^{-t z}\,\mathrm d t+
\\ +\frac {(1-p)_{N+1}}{N!}\int_0^1\left(\int_0^1(1-s)^N(1-st)^{p-N-2}\,\mathrm d s\right)
 t^{N+q-p} \mathrm e^{-t z} \,\mathrm d t.
\end{multline}
Now, if $N+2\ge \Re p>1$ we have
\begin{equation*}
\abs{\int_0^1(1-s)^N(1-st)^{p-N-2}\,\mathrm d s}\le \int_0^1(1-s)^{\Re p-2}\,\mathrm d s=\frac {1}{\Re p-1}.
\end{equation*}
Then we get
\begin{multline}
\label{eqn:72}
\abs{z^{q-p}\int_0^1\left(\int_0^1(1-s)^N(1-st)^{p-N-2}\,\mathrm d s\right)
 t^{N+q-p} \mathrm e^{-t z} \,\mathrm d t}\le \\ \le
\frac {\abs{z}^{\Re(q-p)}\mathrm e^{-\Im(p-q)\Arg z}}{\Re p-1}\int_0^1 t^{N+\Re(q-p)}\mathrm e^{-t\abs{z}\cos(\Arg z)}\,\mathrm d t \le \\ \le
\frac {\abs{z}^{\Re(q-p)}\mathrm e^{\frac{\pi}{2}\abs{\Im(p-q)}}}{\Re p-1}\int_0^1 t^{N+\Re(q-p)}\mathrm e^{-t(\sin\epsilon)\abs{z}}\,\mathrm d t
=\\=\frac {\abs z^{-N-1}\mathrm e^{\frac{\pi}{2}\abs{\Im(p-q)}}}{\Re p-1}\int_0^{\abs z} s^{N+\Re(q-p)}\mathrm e^{-(\sin\epsilon)s}\,\mathrm d s
\le\frac {\abs z^{-N-1}\mathrm e^{\frac{\pi}{2}\abs{\Im(p-q)}}}{\Re p-1}\int_0^{+\infty} s^{N+\Re(q-p)}\mathrm e^{-(\sin\epsilon)s}\,\mathrm d s,
\end{multline}
for $z\ne0$.

In conclusion, when $N\ge \Re p-2$, from   \eqref{eqn:16}, and \eqref{eqn:72} it follows that
\begin{multline}\label{eqn:74}
\int_0^1 (1-t)^{p-1} t^{q-p-1}\mathrm e^{-t z}\,\mathrm d t
=
\sum_{k=0}^N \frac {(1-p)_k}{k!} \int_0^1  t^{k+q-p-1}\mathrm e^{-t z}\,\mathrm d t+z^{p-q}\mathcal O\left(\abs{z}^{-N-1}\right),
\\ \text{\upshape for $\abs{z}\to\infty$, and $\abs{\Arg {z}}\le \frac \pi 2-\epsilon$}.
\end{multline}
On the other hand, by Lemma \ref{lem:8}, we have
\begin{multline}\label{eqn:73}
\int_0^1  t^{k+q-p-1}\mathrm e^{-t z}\,\mathrm d t=z^{p-q-k}\left\{\Gamma\left(k+q-p\right)+
\mathcal O\left(\abs{z}^{\Re(q-p)+k-1}\mathrm e^{-(\sin\epsilon)\abs{z}}\right)\right\}
=\\=
z^{p-q}\left\{\Gamma\left(k+q-p\right)z^{-k}+
\mathcal O\left(\abs{z}^{-N-1}\right)\right\},
\\\text{\upshape for $\abs{z}\to\infty$, and $\abs{\Arg {z}}\le \frac \pi 2-\epsilon$}.
\end{multline}
At last  \eqref{eqn:10} follows from \eqref{eqn:94}, \eqref{eqn:74}, and \eqref{eqn:73}, when $N\ge \Re p-2$. However this restriction can easily be eliminated, because, we have
\begin{align*}
\Phi(p,q;z)&=
\mathrm e^z z^{p-q}
\left\{\frac {\Gamma(q)}{\Gamma(p)}\sum_{k=0}^{N+M}\frac {(q-p)_k(1-p)_k}{k!}\, z^{-k}+\mathcal O\left(\abs{z}^{-N-M-1}\right)\right\}
\\&=\mathrm e^z z^{p-q}
\left\{\frac {\Gamma(q)}{\Gamma(p)}\sum_{k=0}^N\frac {(q-p)_k(1-p)_k}{k!}\, z^{-k}+
\frac {\Gamma(q)}{\Gamma(p)}\sum_{k=N+1}^{N+M}\frac {(q-p)_k(1-p)_k}{k!}\, z^{-k}
+\mathcal O\left(\abs{z}^{-N-M-1}\right)\right\}
\\&=\mathrm e^z z^{p-q}
\left\{\frac {\Gamma(q)}{\Gamma(p)}\sum_{k=0}^N\frac {(q-p)_k(1-p)_k}{k!}\, z^{-k}+
\mathcal O\left(\abs{z}^{-N-1}\right)\right\},
\end{align*}
where
\begin{equation*}
M=\min\{m\in\mathbb Z_+^\ast\mid m\ge \Re p-N-2\}.
\end{equation*}

It remains to eliminate the restriction $\Re q>\Re p>1$ and prove
 \eqref{eqn:10}  for all  $p\in\mathbb C$, and $q\in\mathbb C\setminus \mathbb Z_-$.

Rewrite the recurrence relation \cite[(9.9.11)]{Lebedev} as
\begin{equation}\label{eqn:17}			
\Phi(p,q;z)=\frac {q+z}{q}\,\Phi(p,q+1;z)-\frac {(q+1-p)z}{q(q+1)}\,\Phi(p,q+2;z).
\end{equation}
If $\Re q +1>\Re p>1$, from \eqref{eqn:17} and \eqref{eqn:10} we obtain that
\begin{equation}\label{eqn:18}\begin{aligned}[t]
&\Phi(p,q;z)=\mathrm e^z z^{p-q-1}\left\{\frac {q+z}q\,\frac {\Gamma(q+1)}{\Gamma(p)}\sum_{k=0}^N\frac {(q+1-p)_k(1-p)_k}{k!}\,z^{-k}+\mathcal O\left(\abs{z}^{-N-1}\right)\right\}
\\&\qquad\quad -\mathrm e^z z^{p-q-2}\left\{\frac {(q+1-p)z}{q(q+1)}\,\frac {\Gamma(q+2)}{\Gamma(p)}\sum_{k=0}^N\frac {(q+2-p)_k(1-p)_k}{k!}\,z^{-k}+\mathcal O\left(\abs{z}^{-N-1}\right)\right\}
\\&\quad=
\left\{\frac {\Gamma(q)}{\Gamma(p)}\sum_{k=0}^N\frac {(q+1-p)_k(1-p)_k}{k!}\,z^{-k}+\mathcal O\left(\abs{z}^{-N-1}\right)\right\}
z^{p-q}\mathrm e^z
\\&\qquad\qquad+
\left\{\frac {\Gamma(q)}{\Gamma(p)}\sum_{k=0}^N \frac {C_k}{k!}\,
z^{-k}+\mathcal O\left(\abs{z}^{-N-1}\right)\right\}
z^{p-q-1}\mathrm e^z,
\\ &\hspace{8cm} \text{\upshape for $\abs{z}\to\infty$, and $\abs{\Arg {z}}\le \frac \pi 2-\epsilon$},
\end{aligned}\end{equation}
with
\begin{equation}\label{eqn:19}\begin{aligned}[t]
C_k&=q(q+1-p)_k(1-p)_k-(q+1-p)(q+2-p)_k(1-p)_k
\\&=
(1-p)_k\left\{q(q+1-p)_k-(q+1-p)_k(q+1+k-p)\right\}=-(q+1-p)_k(1-p)_{k+1}.
\end{aligned}\end{equation}
Substituting \eqref{eqn:19} into \eqref{eqn:18} gives
\begin{equation*}\begin{aligned}[t]
&\Phi(p,q;z)=\mathrm e^z z^{p-q}
\left\{\frac {\Gamma(q)}{\Gamma(p)}\sum_{k=0}^N\frac {(q+1-p)_k(1-p)_k}{k!}\,z^{-k}+\mathcal O\left(\abs{z}^{-N-1}\right)\right\}
\\&\hspace{2cm}-\mathrm e^z z^{p-q-1}
\left\{\frac {\Gamma(q)}{\Gamma(p)}\sum_{k=0}^N \frac {(q+1-p)_k(1-p)_{k+1}}{k!}\,
z^{-k}+\mathcal O\left(\abs{z}^{-N-1}\right)\right\}
\\&\;=\mathrm e^z z^{p-q}
\left\{\frac {\Gamma(q)}{\Gamma(p)}+\frac {\Gamma(q)}{\Gamma(p)}\sum_{k=1}^N\left[\frac {(q+1-p)_k(1-p)_k}{k!}
-\frac {(q+1-p)_{k-1}(1-p)_{k}}{(k-1)!}\right]z^{-k}+\mathcal O\left(\abs{z}^{-N-1}\right)\right\}
\\&\;=\mathrm e^z z^{p-q}
\left\{\frac {\Gamma(q)}{\Gamma(p)}\sum_{k=0}^N\frac {(q-p)_k(1-p)_k}{k!}\,z^{-k}+\mathcal O\left(\abs{z}^{-N-1}\right)\right\},
\\&\hspace{8cm} \text{\upshape for $ \abs{z}\to\infty$, and $\abs{\Arg {z}}\le \frac \pi 2-\epsilon$}.
\end{aligned}\end{equation*}
This shows that \eqref{eqn:10}
 holds for $\Re q>\Re p-1$ and $\Re p>1$.
Iterating we get that \eqref{eqn:10} holds for all $q\in\mathbb C\setminus \mathbb Z_-$ and $\Re p>1$.

Now consider the recurrence relation \cite[(9.9.12)]{Lebedev}:
\begin{equation}\label{eqn:20}
\Phi(p,q;z)=\Phi(p+1,q;z)-\frac z q\,\Phi(p+1,q+1;z).
\end{equation}
Substituting \eqref{eqn:10} into \eqref{eqn:20} gives:
\begin{equation*}\begin{aligned}[t]
\Phi(p,q;z)&=
\mathrm e^{z} z^{p+1-q}
\left\{\frac{\Gamma(q)}{\Gamma(p+1)}\sum_{k=0}^{N+1}\frac {(q-p-1)_k(-p)_k}{k!}\, z^{-k}\,+\mathcal O\left(\abs{z}^{-N-2}\right)\right\}+
\\&\qquad-\mathrm e^{z} z^{p+1-q}
\left\{\frac 1{q}\,
\frac{\Gamma(q+1)}{\Gamma(p+1)}\sum_{k=0}^{N+1}\frac {(q-p)_k(-p)_k}{k!}\, z^{-k}\,+\mathcal O\left(\abs{z}^{-N-2}\right)\right\}
\\&=\mathrm e^{z} z^{p-q}
\left\{\frac{\Gamma(q)}{p\Gamma(p)}\sum_{k=1}^{N+1}\frac {(q-p-1)_k(-p)_k-(q-p)_k(-p)_k}{k!}\, z^{-k+1}\,+\mathcal O\left(\abs{z}^{-N-1}\right)\right\}
\\&=\mathrm e^{z} z^{p-q}
\left\{\frac{\Gamma(q)}{\Gamma(p)}\sum_{k=1}^{N+1}\frac {(q-p)_{k-1}(1-p)_{k-1}}{(k-1)!}\, z^{-k+1}\,+\mathcal O\left(\abs{z}^{-N-1}\right)\right\}
\\&=\mathrm e^{z} z^{p-q}
\left\{\frac{\Gamma(q)}{\Gamma(p)}\sum_{k=0}^{N}\frac {(q-p)_{k}(1-p)_{k}}{k!}\, z^{-k}\,+\mathcal O\left(\abs{z}^{-N-1}\right)\right\},
\\&\hspace{8cm} \text{\upshape for $\abs{z}\to\infty$, and $\abs{\Arg {z}}\le \frac \pi 2-\epsilon$}.
\end{aligned}\end{equation*}

This means that \eqref{eqn:10} holds for $\Re p>0$ and, by iteration, for all   $p\in\mathbb C$.
\end{proof}

\subsection{Asymptotic behavior of $\Theta$.}

For all $p\in\mathbb C$ set
\begin{equation}\label{eqn:620}
\Theta(p;z)=\sqrt \pi\left\{\frac {1}{\Gamma\!\left(p+\frac 12\right)}\Phi\!\left(p,\frac 12;z^2\right)-
\frac {2z}{\Gamma\!\left(p\right)}\Phi\!\left(p+\frac 12,\frac 32;z^2\right)\right\}.
\end{equation}
Observe that  $\Theta$ is an entire analytic function of $z$.
Moreover, since $\frac 1{\Gamma(-n)}=0$ for all $n\in\mathbb Z_+$,   $\Theta$ is  also  an    entire analytic function of $p$.

\begin{proposition}
Consider $p\in \mathbb C$ such that $\Re p>0$. For all  $\theta\in\left\{-\frac \pi 2,0,\frac \pi 2\right\}$, we have the integral representation
\begin{equation}\label{eqn:614}
\Theta(p;z)=\frac {\mathrm e^{\mathrm i p \theta}}{\Gamma(p)}\int_0^{\infty} t^{p-1}
\left(1+\mathrm e^{\mathrm i\theta} t\right )^{-\left(p+\frac 12\right)}
\mathrm e^{-\exp\left(\mathrm i \theta\right) z^2 t}\,\mathrm d t,
\qquad\text{\upshape for all $z\in\mathcal S_\theta$,}
\end{equation}
where
\begin{equation*}
\mathcal S_\theta=\left\{z\in\mathbb C^\ast\mid \abs{\Arg z+\frac \theta 2}<\frac \pi 4\right\}.
\end{equation*}
\end{proposition}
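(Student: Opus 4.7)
The plan is to establish the representation first for $\theta=0$ (where $\mathcal S_0=\{z\in\mathbb C^\ast:|\Arg z|<\pi/4\}$, so that $\Re z^2>0$ makes the right-hand side of \eqref{eqn:614} absolutely convergent), and then to transfer it to $\theta=\pm\pi/2$ by a rotation-of-contour argument modeled on Lemma~\ref{lem:9}.

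For $\theta=0$, I would substitute the integral representation \eqref{eqn:94} into each of the two $\Phi$-factors in the definition \eqref{eqn:620} of $\Theta(p;z)$, temporarily restricting to $0<\Re p<\tfrac12$ so that the hypothesis $\Re q>\Re p>0$ of \eqref{eqn:94} holds for both $\Phi(p,\tfrac12;z^2)$ and $\Phi(p+\tfrac12,\tfrac32;z^2)$. After factoring out the common $\mathrm e^{z^2}$ and performing the substitution $s=u/(1+u)$ (so $1-s=1/(1+u)$ and $\mathrm d s=\mathrm d u/(1+u)^2$) to map $(0,1)$ onto $(0,\infty)$, the two resulting integrals are to be brought to a common form via the Gauss duplication formula $\Gamma(p)\Gamma(p+\tfrac12)=2^{1-2p}\sqrt{\pi}\,\Gamma(2p)$ and the reflection formula $\Gamma(a)\Gamma(1-a)=\pi/\sin(\pi a)$. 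The difference of the two integrands should then collapse into $t^{p-1}(1+t)^{-(p+1/2)}\mathrm e^{-z^2 t}/\Gamma(p)$, yielding \eqref{eqn:614} at $\theta=0$ for $0<\Re p<\tfrac12$. Both sides being holomorphic in $p$ on $\{\Re p>0\}$ (the left entire by the observation following \eqref{eqn:620}, the right by absolute convergence), analytic continuation in $p$ extends the identity to the entire half-plane.

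For $\theta=\pm\pi/2$, I would define, for $\theta\in\mathbb R$ and $z\in\mathbb C^\ast$ with $\Re(\mathrm e^{\mathrm i\theta}z^2)>0$,
\begin{equation*}
G(\theta)=\mathrm e^{\mathrm i p\theta}\int_0^\infty t^{p-1}\bigl(1+\mathrm e^{\mathrm i\theta}t\bigr)^{-(p+\frac 12)}\mathrm e^{-\mathrm e^{\mathrm i\theta}z^2 t}\,\mathrm d t,
\end{equation*}
and show $G'(\theta)\equiv 0$. Differentiation under the integral produces three terms, coming from $\mathrm e^{\mathrm i p\theta}$, from $(1+\mathrm e^{\mathrm i\theta}t)^{-(p+1/2)}$, and from $\mathrm e^{-\mathrm e^{\mathrm i\theta}z^2 t}$; integrating the first one by parts via $p\,t^{p-1}\,\mathrm d t=\mathrm d(t^p)$, with boundary contributions vanishing thanks to $\Re p>0$ at $0$ and exponential decay at $\infty$, yields exactly the two contributions needed to cancel the other two terms, the very same mechanism used for Lemma~\ref{lem:9}. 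Since each of the pairs $\mathcal S_0\cap\mathcal S_{\pi/2}$ and $\mathcal S_0\cap\mathcal S_{-\pi/2}$ is nonempty, fixing $z$ in one such intersection lets $\theta$ vary continuously from $0$ to $\pm\pi/2$ while preserving $\Re(\mathrm e^{\mathrm i\theta}z^2)>0$, so $G(\pm\pi/2)=G(0)=\Gamma(p)\,\Theta(p;z)$ at such $z$; analytic continuation in $z$ then extends the equality to all of $\mathcal S_{\pm\pi/2}$.

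The main obstacle is the algebraic collapse in the first step: the two integrals produced by substituting \eqref{eqn:94} into \eqref{eqn:620} carry different $(1-s)$-exponents ($p-1$ and $p-\tfrac12$) and different prefactors, and the gamma-factor bookkeeping needed to merge them into a single Laplace-type integrand is the delicate point. Should the direct manipulation prove unwieldy, a viable alternative is to verify that the right-hand side of \eqref{eqn:614} at $\theta=0$ satisfies Kummer's confluent equation $wy''+(\tfrac12-w)y'-p\,y=0$ in the variable $w=z^2$ (by two integrations by parts in $t$), and to match two Taylor coefficients at $z=0$ against the combination defining $\Theta$ in \eqref{eqn:620}; since that solution space is two-dimensional this pins down the identity, and the Step~2 rotation argument then handles the remaining values of $\theta$ exactly as above.
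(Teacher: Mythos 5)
Your Step 2 (the rotation in $\theta$ via $G'(\theta)=0$, integration by parts, and analytic continuation in $z$) is correct; it is the same mechanism the paper uses inside Lemma \ref{lem:9}, and the domain bookkeeping ($\mathcal S_0\cap\mathcal S_{\pm\pi/2}\ne\emptyset$ and $\Re\bigl(\mathrm e^{\mathrm i\theta}z^2\bigr)>0$ along the whole path) checks out. The genuine gap is in the primary route of Step 1. Substituting \eqref{eqn:94} into \eqref{eqn:620} gives integrals over $(0,1)$ whose exponential factor is $\mathrm e^{-sz^2}$ (with the prefactor $\mathrm e^{z^2}$ outside); under $s=u/(1+u)$ this becomes $\mathrm e^{-z^2u/(1+u)}$, not $\mathrm e^{-z^2u}$, so no amount of Gamma-function bookkeeping will merge the two pieces into the Laplace-type integrand $t^{p-1}(1+t)^{-(p+\frac 12)}\mathrm e^{-z^2t}$ on $(0,\infty)$. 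The obstruction is structural: by Theorem \ref{thm:31} each of the two terms in \eqref{eqn:620} is of size $\mathrm e^{z^2}$ times an algebraic factor as $z\to\infty$ in $\mathcal S_0$ (and one of them carries the external factor $z$), whereas the right-hand side of \eqref{eqn:614} decays like $z^{-2p}$; the identity rests on a cancellation of the exponentially large parts that a termwise change of variables cannot produce.

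Your fallback, however, is sound and is essentially the paper's proof: the paper shows that $\mathrm e^{-\frac 12 z^2}$ times the right-hand side of \eqref{eqn:614} solves the Hermite--Weber equation \eqref{eqn:502} with $\lambda=1-4p$ (by exhibiting the integrand of $w''-(z^2+4p-1)w$ as an exact $t$-derivative), invokes Proposition \ref{pro:56}, and then determines the two connection coefficients by letting $z\to 0$ along the ray $\mathrm e^{-\mathrm i\theta/2}\mathbb R_+$, using Lemma \ref{lem:9} for the value and the rescaling limit \eqref{eqn:650} for the coefficient of $z$. Two cautions if you carry this out: ``matching two Taylor coefficients at $z=0$'' must mean the value and the first $z$-derivative (the solution basis is $\Phi(p,\frac 12;w)$ and $w^{1/2}\Phi(p+\frac 12,\frac 32;w)$, which is not analytic in $w=z^2$ at $0$), and the derivative limit is not immediate since $\int_0^\infty t^{p}(1+t)^{-(p+\frac 12)}\mathrm e^{-z^2t}\,\mathrm d t$ blows up like $\abs{z}^{-1}$ as $z\to 0$ --- this is exactly what the computation \eqref{eqn:650} handles. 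Note finally that the paper runs the ODE argument uniformly in $\theta\in\left\{-\frac\pi 2,0,\frac\pi 2\right\}$, which makes your rotation step unnecessary, though it is a perfectly valid alternative.
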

\begin{proof}
We have
\begin{equation}\label{eqn:783}
\abs{\Arg (z^2)+\theta}<\frac \pi 2,\qquad\text{\upshape for all $z\in \mathcal S_\theta$}.
\end{equation}
Then on $\mathcal S_\theta$ we have
\begin{equation*}
\Re \left(\mathrm e^{\mathrm i\theta}z^2\right)=\cos\left(\Arg(z^2)+\theta\right)\abs{z}^2>0
\end{equation*}
and the following integral is convergent:
\begin{equation}\label{eqn:781}
w(z)=
\mathrm e^{-\frac 12 z^2} \int_0^{\infty} t^{p-1}
\left(1+\mathrm e^{\mathrm i \theta} t\right )^{-\left(p+\frac 12\right)} \mathrm e^{-\exp\left(\mathrm i \theta\right) z^2 t}\,\mathrm d t.
\end{equation}
We have
\begin{align*}
&w''-(z^2+4p-1)w=
\\&\quad  =
\mathrm e^{-\frac 12 z^2} \int_0^{\infty} t^{p-1}
\left(1+\mathrm e^{\mathrm i \theta} t\right )^{-\left(p+\frac 12\right)} \mathrm e^{-\exp\left(\mathrm i \theta\right) z^2 t}
\Bigl\{\left(1+2\mathrm e^{\mathrm i \theta}t\right)^2 z^2-\left(1+2\mathrm e^{\mathrm i \theta}t\right)
-(z^2+4p-1)\Bigr\}\,\mathrm d t
\\&\quad  =
-4\mathrm e^{-\frac 12 z^2} \int_0^{\infty} \frac {\mathrm d}{\mathrm d t}
\Bigl\{t^p
\left(1+\mathrm e^{\mathrm i \theta} t\right )^{-\left(p-\frac 12\right)} \mathrm e^{-\exp\left(\mathrm i \theta\right) z^2 t}\Bigr\}\,\mathrm d t
\\&\quad  =
-4\mathrm e^{-\frac 12 z^2} \Bigl[t^p
\left(1+\mathrm e^{\mathrm i \theta} t\right )^{-\left(p-\frac 12\right)} \mathrm e^{-\exp\left(\mathrm i \theta\right) z^2 t}\Bigr]_{t=0}^{t=\infty}
=0.
\end{align*}
This means that \eqref{eqn:781}
is a solution to \eqref{eqn:502} with $\lambda=1-4p$.
By Proposition \ref{pro:56} there exist $c_1,c_2\in\mathbb C$ such that
\begin{equation}\label{eqn:618}
\int_0^{\infty} t^{p-1}
\left(1+\mathrm e^{\mathrm i \theta} t\right )^{-\left(p+\frac 12\right)} \mathrm e^{-\exp\left(\mathrm i \theta\right) z^2 t}\,\mathrm d t=
c_1\Phi\!\left(p,\frac 12;z^2\right)+c_2z\Phi\!\left(p+\frac 12,\frac 32;z^2\right),
\end{equation}
for all $z\in\mathcal S_\theta$.

Set $z=\mathrm e^{-\mathrm i\frac \theta 2}\abs{s}$, with $s\in\mathbb R^\ast$,
in \eqref{eqn:618}, and take the limit for  $s\to 0$. Since  $\mathrm e^{-\mathrm i\frac \theta 2}\abs{s}\in\mathcal S_\theta$,
thanks to Lemma \ref{lem:9} we get
\begin{equation}\label{eqn:607}
c_1=\int_0^{\infty} t^{p-1}
\left(1+\mathrm e^{\mathrm i \theta} t\right )^{-\left(p+\frac 12\right)}\,\mathrm d t=
\mathrm e^{-\mathrm i p\theta}\,\frac {\Gamma(p)\Gamma\!\left(\frac 12\right)}{\Gamma\!\left(p+\frac 12\right)}
=\sqrt\pi \mathrm e^{-\mathrm i p\theta}\,\frac {\Gamma(p)}{\Gamma\!\left(p+\frac 12\right)}.
\end{equation}
Now we compute $c_2$. Differentiate \eqref{eqn:618} with respect to $z$, set $z=\mathrm e^{-\mathrm i\frac \theta 2}\abs{s}$, with $s\in\mathbb R^\ast$, and take the limit for $s\to 0$.
We get
\begin{equation}\label{eqn:650}\begin{split}
c_2&=- 2\mathrm e^{\mathrm i\frac \theta 2}
\lim_{s\to 0} \abs{s}\int_0^{\infty} t^p\left(1+\mathrm e^{\mathrm i\theta}t\right)^{-\left(p+\frac 12\right)}
\mathrm e^{-s^2 t}\,\mathrm d t
\\&=- 2\mathrm e^{\mathrm i\frac \theta 2}
\lim_{s\to 0} \abs{s}\int_0^{\infty} \left(\frac {t}{s^2}\right)^p\left(1+\mathrm e^{\mathrm i\theta}\frac t{s^2}\right)^{-\left(p+\frac 12\right)}
\mathrm e^{-t}\,\frac {\mathrm d t}{s^2}
\\&=- 2\mathrm e^{\mathrm i\frac \theta 2}
\int_0^{\infty} t^{-\frac 12} \mathrm e^{-\mathrm i\left(p+\frac 12\right)\theta}
\mathrm e^{-t}\,\mathrm d t=- 2\mathrm e^{-\mathrm i p \theta}\Gamma\!\left(\tfrac 12\right)
=- 2\sqrt\pi \mathrm e^{-\mathrm i p \theta}.
\end{split}\end{equation}
From  \eqref{eqn:618}, \eqref{eqn:607}, and \eqref{eqn:650} we obtain
\begin{multline*}
\int_0^{\infty} t^{p-1}
\left(1+\mathrm e^{\mathrm i \theta} t\right )^{-\left(p+\frac 12\right)} \mathrm e^{-\exp(\mathrm i \theta) z^2 t}\,\mathrm d t=
\frac {\sqrt\pi}{\mathrm e^{\mathrm i p\theta}}\left\{\frac {\Gamma(p)}{\Gamma\!\left(p+\frac 12\right)}\, \Phi\!\left(p,\frac 12;z^2\right)
-2 z\Phi\!\left(p+\frac 12,\frac 32;z^2\right)\right\},
\\ \text{\upshape for all $z\in\mathcal S_\theta$},\end{multline*}
 which is equivalent to
\eqref{eqn:614}.\end{proof}

\begin{theorem}\label{thm:30}
Let $0<\epsilon<\frac \pi 2$.
For all $N\in\mathbb Z_+$ we have
\begin{multline}\label{eqn:611}
\Theta(p;z)=z^{-2p}\left\{\sum_{k=0}^N \frac{(-1)^k}{k!}\,(p)_k\left(p+\frac 12\right)_{\! k}
z^{-2k}+\mathcal O\left(\abs{z}^{-2(N+1)}\right)\right\},
\\
\text{\upshape for $\abs{z}\to\infty$,  and $\abs{\Arg z}\le \frac \pi 2 - \epsilon$.}
\end{multline}
\end{theorem}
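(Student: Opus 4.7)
The plan is to prove the expansion first under the temporary restriction $\Re p > 0$, by working directly with the integral representation \eqref{eqn:614}, and then to remove the restriction by a recurrence argument paralleling the last part of the proof of Theorem \ref{thm:31}.

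Under $\Re p > 0$, the three strips $\mathcal S_\theta$ with $\theta\in\{0,\pi/2,-\pi/2\}$ cover the closed sector $\{\abs{\Arg z}\le\pi/2-\epsilon\}$, and on each of them \eqref{eqn:614} is valid. On the chosen strip I would Taylor expand
\begin{equation*}
\left(1+\mathrm e^{\mathrm i\theta}t\right)^{-\left(p+\frac 12\right)}=\sum_{k=0}^N\frac{(-\mathrm e^{\mathrm i\theta})^k\left(p+\frac 12\right)_k}{k!}\,t^k+R_N(t,\theta),
\end{equation*}
with the Lagrange integral remainder
\begin{equation*}
R_N(t,\theta)=\frac{(-\mathrm e^{\mathrm i\theta})^{N+1}\left(p+\frac 12\right)_{N+1}}{N!}\,t^{N+1}\int_0^1(1-s)^N\left(1+s\mathrm e^{\mathrm i\theta}t\right)^{-\left(p+\frac 12\right)-N-1}\mathrm d s,
\end{equation*}
and insert into \eqref{eqn:614}. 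For the main terms, Lemma \ref{lem:10} applied with argument $\mathrm e^{\mathrm i\theta}z^2$ (whose real part is positive exactly on $\mathcal S_\theta$) yields $\int_0^\infty t^{p+k-1}\mathrm e^{-\mathrm e^{\mathrm i\theta}z^2 t}\,\mathrm d t=\Gamma(p+k)(\mathrm e^{\mathrm i\theta}z^2)^{-(p+k)}$. A short computation using $\Arg(\mathrm e^{\mathrm i\theta}z^2)=2\Arg z+\theta$ on $\mathcal S_\theta$ shows that the phases combine as
\begin{equation*}
\mathrm e^{\mathrm i p\theta}\left(-\mathrm e^{\mathrm i\theta}\right)^k\left(\mathrm e^{\mathrm i\theta}z^2\right)^{-(p+k)}=(-1)^k z^{-2p-2k},
\end{equation*}
independently of $\theta$; this produces the asserted sum and also guarantees that the three local expansions agree on the overlaps.

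For the remainder, I would first take $N$ large enough that $\Re\!\left(p+\frac 12+N+1\right)\ge 0$; then $\abs{(1+s\mathrm e^{\mathrm i\theta}t)^{-(p+1/2)-N-1}}$ is bounded uniformly for $s\in[0,1]$ and $t\ge 0$, so $\abs{R_N(t,\theta)}\le C_{p,N}\,t^{N+1}$. Inserting this into \eqref{eqn:614} and applying Lemma \ref{lem:10} once more, using $\Re(\mathrm e^{\mathrm i\theta}z^2)\ge\sin(\epsilon)\abs{z}^2$ on the relevant sub-strip, I would bound the remainder by $\mathcal O(\abs{z}^{-2\Re p-2N-2})$, which matches the claim. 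Values of $N$ that are too small for the bound on $R_N$ are recovered by first expanding to order $N+M$ for $M$ large and then truncating — exactly the device used at the end of the proof of Theorem \ref{thm:31}. Finally, to pass from $\Re p>0$ to all $p\in\mathbb C$, I would apply the recurrence \eqref{eqn:20} to each of the two $\Phi$-terms in \eqref{eqn:620} so as to express $\Theta(p;z)$ as a combination of $\Theta$-values with $\Re p$ shifted upward, iterating and exploiting the fact that both $\Theta(p;z)$ and the coefficients $(p)_k(p+\frac 12)_k$ are entire in $p$.

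The main obstacle I anticipate is the phase bookkeeping in the second paragraph: although the cancellation of phases does yield a $\theta$-independent sum, verifying it correctly requires careful tracking of the principal branch of $\Arg$ (in particular, that $\Arg(\mathrm e^{\mathrm i\theta}z^2)=2\Arg z+\theta$ stays in $(-\pi,\pi]$ precisely on $\mathcal S_\theta$) and producing a remainder estimate that is uniform up to the boundaries $\abs{\Arg z}=\pi/2-\epsilon$ where $\mathcal S_{\pm\pi/2}$ meet $\mathcal S_0$.
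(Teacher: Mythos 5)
Your proposal matches the paper's proof in all essentials: the same reduction to $\Re p>0$, the same covering of the sector by the three strips $\mathcal S_\theta$, the same binomial expansion with integral remainder inserted into \eqref{eqn:614}, Lemma \ref{lem:10} for the main terms with the phases combining to $(-1)^k z^{-2p-2k}$, and the bound $\Re\left(\mathrm e^{\mathrm i\theta}z^2\right)\ge(\sin\epsilon)\abs{z}^2$ for the error. The only cosmetic differences are that the paper bounds the remainder for every $N$ directly (using $\abs{1+\mathrm e^{\mathrm i\theta}st}\ge 1$ together with $\Re p>0$, so no truncation device is needed) and obtains the $p$-shifting step from the three-term recurrence \eqref{eqn:613} for $\Theta$ itself, derived by direct series manipulation --- be aware that applying \eqref{eqn:20} alone to the two $\Phi$-terms in \eqref{eqn:620} produces $\Phi$'s with second parameters $\tfrac 32$ and $\tfrac 52$ that do not recombine into shifted $\Theta$'s without also invoking \eqref{eqn:17} or an equivalent contiguity relation.
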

\begin{remark}
Observe that when either $p$ or $p+\frac 12$ belong to $\mathbb Z_-$,
$\Theta(p;z)$ becomes a polynomial. So \eqref{eqn:611} holds on the whole complex plane.

Let $n\in\mathbb Z_+$. Then from \eqref{eqn:620}, \eqref{eqn:362}, \eqref{eqn:495}, and  \cite[(1.2.2)]{Lebedev}
we obtain
\begin{multline}\label{eqn:820}
\Theta(-n;z)=\frac {\sqrt \pi}{\Gamma\!\left(\frac 12-n\right)}\Phi\!\left(-n,\frac 12;z^2\right)=
\frac {\sqrt \pi}{\Gamma\!\left(\frac 12-n\right)}\sum_{k=0}^n\frac {(-n)_k}{k!\left(\frac 12\right)_k}\,z^{2k}
=\\=
z^{2n}\sum_{k=0}^n\frac {(-1)^k}{k!}\,(-n)_k\left(\frac 12-n\right)_{\!k} z^{-2k},
\end{multline}
and
\begin{multline}\label{eqn:821}
\Theta\left(-\frac 12-n;z\right)=
-\frac {2\sqrt \pi}{\Gamma\!\left(-\frac 12-n\right)}\,z\Phi\!\left(-n,\frac 32;z^2\right)=
-\frac {2\sqrt \pi}{\Gamma\!\left(-\frac 12-n\right)}\,z\sum_{k=0}^n\frac {(-n)_k}{k!\left(\frac 32\right)_k}\,z^{2k}=
\\=
z^{2n+1}\sum_{k=0}^n\frac {(-1)^k}{k!}\,(-n)_k\left(-\frac 12-n\right)_{\!k} z^{-2k}.
\end{multline}
 \end{remark}
\begin{proof}
\begin{trivlist}
\item (I) First we observe that it suffices to prove \eqref{eqn:611} for $\Re p>0$.

Using \eqref{eqn:362}, and \eqref{eqn:620}, a long, but straightforward computation shows that
\begin{equation}\label{eqn:613}
\Theta(p;z)=\left(2p+\frac 32+z^2\right)\Theta(p+1;z)-(p+1)\left(p+\frac 32\right)\Theta(p+2;z).
\end{equation}

Assume now $\Re p>-1$, and \eqref{eqn:611} true for $\Re p>0$. By \eqref{eqn:613} we obtain
\begin{align*}
&\Theta(p;z)=\left(2p+\frac 32+z^2\right)z^{-2(p+1)}
\left\{\sum_{k=0}^N \frac{(-1)^k(p+1)_k\left(p+\frac 32\right)_k}{k!}\,z^{-2k}+\mathcal O\left(\abs{z}^{-2(N+1)}\right)\right\}
\\&\qquad-(p+1)\left(p+\frac 32\right)z^{-2(p+2)}
\left\{\sum_{k=0}^N \frac{(-1)^k(p+2)_k\left(p+\frac 52\right)_k}{k!}\,z^{-2k}+\mathcal O\left(\abs{z}^{-2(N+1)}\right)\right\}=
\\&\quad
\begin{aligned}&=z^{-2p}\left(2p+\frac 32\right)
\left\{\sum_{k=1}^{N+1} \frac{(-1)^{k-1}(p+1)_{k-1}\left(p+\frac 32\right)_{k-1}}{(k-1)!}\,z^{-2k}+\mathcal O\left(\abs{z}^{-2(N+2)}\right)\right\}
\\&\qquad+z^{-2p}\left\{\sum_{k=0}^N \frac{(-1)^k(p+1)_k\left(p+\frac 32\right)_k}{k!}\,z^{-2k}+\mathcal O\left(\abs{z}^{-2(N+1)}\right)\right\}
\\&\qquad-z^{-2p}(p+1)\left(p+\frac 32\right)
\left\{\sum_{k=2}^{N+2} \frac{(-1)^{k-2}(p+2)_{k-2}\left(p+\frac 52\right)_{k-2}}{(k-2)!}\,z^{-2k}+\mathcal O\left(\abs{z}^{-2(N+3)}\right)\right\}=
\end{aligned}
\\&\quad
\begin{aligned}&=z^{-2p}+z^{-2p}\left\{2p+\frac 32-(p+1)\left(p+\frac 32\right)\right\}z^{-2}
\\&\qquad+z^{-2p}\left\{\left(-\frac {\left(2p+\frac 32\right)k}{p\left(p+\frac 12\right)}+\frac {(p+k)\left(p+\frac 12+k\right)}{p\left(p+\frac 12\right)}
-\frac {k(k-1)}{p\left(p+\frac 12\right)}\right)
\sum_{k=2}^N\frac{(-1)^k(p)_k\left(p+\frac 12\right)_k}{k!}\,z^{-2k}\right\}
\\&\qquad+z^{-2p}\mathcal O\left(\abs{z}^{-2(N+1)}\right)=
\end{aligned}
\\&\quad=z^{-2p}\left\{\sum_{k=0}^N \frac{(-1)^k(p)_k\left(p+\frac 12\right)_k}{k!}\,z^{-2k}+\mathcal O\left(\abs{z}^{-2(N+1)}\right)\right\}.
\end{align*}
This shows that \eqref{eqn:611} is true for $\Re p>-1$. By iteration we get that \eqref{eqn:611} is true for all $p\in\mathbb C$.
\item(II) Since $\left(-\frac \pi 2,\frac \pi 2\right)\subset \mathcal S_{-\frac \pi2} \cup \mathcal S_{0} \cup \mathcal S_{\frac \pi2}$ it suffices to prove \eqref{eqn:611} for
\begin{equation}\label{eqn:794}
\abs{\Arg z+\frac \theta 2}\le \frac \pi 4-\frac \epsilon 2,
\end{equation}
for all $\theta\in\left\{-\frac \pi 4,0,\frac \pi 4\right\}$.

According to (I), we may assume  $\Re p>0$.
Integrating term by term the binomial expansion
\begin{multline*}
\left(1+\mathrm e^{\mathrm i \theta} t\right)^{-\left(p+\frac 12\right)}=\sum_{k=0}^N \frac {(-1)^k\left(p+\frac 12\right)_k}{k!}\, \mathrm e^{\mathrm i k \theta}t^k+
\\+\frac {(-1)^{N+1}\left(p+\frac 12\right)_{N+1}}{N!}\,\mathrm e^{\mathrm i(N+1)\theta}t^{N+1}\int_0^1(1-s)^N\left(1+\mathrm e^{\mathrm i\theta} s t\right)^{-\left(p+N+\frac 32\right)}\,\mathrm d s,
\end{multline*}
thanks to \eqref{eqn:614}  we obtain
\begin{multline}\label{eqn:775}
\Theta(p;z)=\sum_{k=0}^N \frac {(-1)^k\left(p+\frac 12\right)_k}{\Gamma(p) k!}\,\mathrm e^{\mathrm i(p+k)\theta}
\int_0^{\infty}t^{p+k-1}\mathrm e^{-\exp(\mathrm i\theta) z^2 t}\,\mathrm d t+ \\
+\frac {(-1)^{N+1}\left(p+\frac 12\right)_{N+1}}{\Gamma(p)N!}\,\mathrm e^{\mathrm i (p+N+1)\theta}
\int_0^{\infty}\left(\int_0^1(1-s)^N\left(1+\mathrm e^{\mathrm i\theta} s t\right)^{-\left(p+N+\frac 32\right)}\,\mathrm d s\right)
t^{p+N}\mathrm e^{-\exp(\mathrm i \theta) z^2 t}\,\mathrm d t.
\end{multline}

Thanks to  Lemma \ref{lem:10},
we have
\begin{equation}\label{eqn:776}
\frac {\mathrm e^{\mathrm i(p+k)\theta}}{\Gamma(p)}\,
\int_0^{\infty}t^{p+k-1}\mathrm e^{-\exp(\mathrm i\theta) z^2 t}\,\mathrm d t
=\frac {\Gamma(p+k)}{\Gamma(p)}\,(z^2)^{-(p+k)}=(p)_k z^{-2(p+k)}=(p)_k z^{-2p} z^{-2k}.
\end{equation}
Moreover
\begin{equation*}
\abs{1+\mathrm e^{\mathrm i\theta} st}^2=1+2(\cos\theta)st+s^2t^2\ge 1,\qquad \text{\upshape for $\abs{\theta}\le \frac \pi 2$}.
\end{equation*}
Then we have
\begin{equation}\label{eqn:615}
\abs{\int_0^1(1-s)^N\left(1+\mathrm e^{\mathrm i\theta} s t\right)^{-\left(p+N+\frac 32\right)}\,\mathrm d s}\le
\int_0^1 (1-s)^N \mathrm e^{(\Im p)\Arg\left(1+\mathrm e^{\mathrm i\theta} st\right)}\,\mathrm d s\le \frac {\mathrm e^{\abs{\Im p}\pi}}{N+1}.
\end{equation}
On the other hand from \eqref{eqn:794} we obtain
\begin{equation*}
\Re\left(\mathrm e^{\mathrm i \theta}z^2\right)=\cos\left(\Arg (z^2)+ \theta\right)\abs{z^2}\ge \cos\left(\frac \pi 2-\epsilon\right)\abs{z^2}= (\sin\epsilon)\abs{z}^2.
\end{equation*}
Then \eqref{eqn:615} implies that
\begin{multline}\label{eqn:609}\begin{aligned}
&\abs{\int_0^{\infty}\left(\int_0^1(1-s)^N\left(1+\mathrm e^{\mathrm i\theta} s t\right)^{-\left(p+N+\frac 32\right)}\,\mathrm d s\right)
t^{p+N}\mathrm e^{-\exp(\mathrm i \theta) z^2 t}\,\mathrm d t}\le
\\ &\qquad \le
\frac {\mathrm e^{\abs{\Im p}\pi}}{N+1}
\abs{\int_0^{\infty}
t^{\Re p+N}\mathrm e^{-\Re\left(\exp(\mathrm i \theta z^2\right) t}\,\mathrm d t}
\le \frac {\mathrm e^{\abs{\Im p}\pi}}{N+1}
\abs{\int_0^{\infty}
t^{\Re p+N}\mathrm e^{-(\sin\epsilon) \abs{z}^2 t}\,\mathrm d t}
\\&\qquad
=\frac {\mathrm e^{\abs{\Im p}\pi}}{N+1}\,\bigl((\sin\epsilon) \abs{z}^2\bigr)^{-(\Re p+N+1)}
\abs{\int_0^{\infty}
s^{\Re p+N}\mathrm e^{-s}\,\mathrm d s}
\\&\qquad\le\frac {\mathrm e^{\abs{ p}\pi}\Gamma(\Re p+N+1)}{(N+1)(\sin\epsilon)^{\Re p+N+1}}\,\abs{z}^{-2(\Re p+N+1)},
\end{aligned}
\\
\text{\upshape for $\abs{\Arg z+\frac \theta 2}\le \frac \pi 4-\frac \epsilon 2$, and $\theta\in\left\{-\frac \pi 4,0,\frac \pi 4\right\}$}.
\end{multline}
In conclusion,  the expansion \eqref{eqn:611} follows from   \eqref{eqn:775}, \eqref{eqn:776}, and \eqref{eqn:609}. \qedhere
\end{trivlist}\end{proof}

\section{Asymptotic expansions of the general solution to   Hermite-Weber equation.}\label{sec:6}

Let $w_1$, and $w_2$ be the solutions to equation \eqref{eqn:502} given by \eqref{eqn:520}.
\begin{proposition}\label{pro:57}
We have the following identities (recall that $\frac 1\Gamma$ extends to an entire function):
\begin{align}\label{eqn:803}
&\frac {w_1(z)}{\Gamma\left(\frac{3-\lambda}4\right)}\mp \frac {2w_2(z)}{\Gamma\left(\frac{1-\lambda}4\right)}=
\frac {\mathrm e^{-\frac 12 z^2}}{\sqrt \pi}\Theta\left(\frac {1-\lambda}4;\pm z\right), \\
\label{eqn:804}
&\frac {w_1(z)}{\Gamma\left(\frac{3+\lambda}4\right)}\pm \frac {2\mathrm i w_2(z)}{\Gamma\left(\frac{1+\lambda}4\right)}=
\frac {\mathrm e^{\frac 12 z^2}}{\sqrt \pi}\Theta\left(\frac {1+\lambda}4;\mp \mathrm i  z\right).
\end{align}
and
\begin{align}\label{eqn:788}
&w_1(z)=\sqrt \pi\, \mathrm e^{-\mathrm i \frac {1+\lambda}4 \pi}\left\{\frac {\mathrm i \mathrm e^{-\frac 12 z^2}}{\Gamma\!\left(\frac {1+\lambda}4\right)}\,
\Theta\!\left(\frac {1-\lambda}4;\pm z\right)+
\frac {\mathrm e^{\frac 12 z^2}}{\Gamma\!\left(\frac {1-\lambda}4\right)}\,\Theta\!\left(\frac {1+\lambda}4;\mp\mathrm i z\right)\right\},
\\\label{eqn:789}
&w_2(z)=\mp\frac {\sqrt \pi} {2}\, \mathrm e^{-\mathrm i \frac {1+\lambda}4 \pi}
\left\{\frac {\mathrm e^{-\frac 12 z^2}}{\Gamma\!\left(\frac {3+\lambda}4\right)}\,
\Theta\!\left(\frac {1-\lambda}4;\pm z\right)-
\frac {\mathrm e^{\frac 12 z^2}}{\Gamma\!\left(\frac {3-\lambda}4\right)}\,\Theta\!\left(\frac {1+\lambda}4;\mp\mathrm i z\right)\right\}.
\end{align}
\end{proposition}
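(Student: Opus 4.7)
The plan is to establish the two linear identities \eqref{eqn:803}--\eqref{eqn:804} first, and then recover \eqref{eqn:788}--\eqref{eqn:789} by solving the resulting $2\times 2$ linear system for $(w_1,w_2)$.

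For \eqref{eqn:803}, the setting $p=(1-\lambda)/4$ makes $p+\tfrac 12=(3-\lambda)/4$, so the definition \eqref{eqn:620} of $\Theta$ gives directly
\begin{equation*}
\Theta\!\left(\frac{1-\lambda}{4};\pm z\right)=\sqrt\pi\left\{\frac{\Phi\!\left(\frac{1-\lambda}{4},\frac 12;z^2\right)}{\Gamma\!\left(\frac{3-\lambda}{4}\right)}\mp\frac{2z\,\Phi\!\left(\frac{3-\lambda}{4},\frac 32;z^2\right)}{\Gamma\!\left(\frac{1-\lambda}{4}\right)}\right\},
\end{equation*}
since $(\pm z)^2=z^2$. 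Multiplying by $\mathrm e^{-z^2/2}/\sqrt\pi$ and comparing with \eqref{eqn:520} yields \eqref{eqn:803} at once.

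For \eqref{eqn:804}, I would apply Kummer's identity \eqref{eqn:12} with $(p,q)=\left(\frac{1-\lambda}{4},\frac 12\right)$ and $(p,q)=\left(\frac{3-\lambda}{4},\frac 32\right)$ to rewrite
\begin{equation*}
w_1(z)=\mathrm e^{\frac 12 z^2}\Phi\!\left(\frac{1+\lambda}{4},\frac 12;-z^2\right),\qquad w_2(z)=\mathrm e^{\frac 12 z^2}z\,\Phi\!\left(\frac{3+\lambda}{4},\frac 32;-z^2\right).
\end{equation*}
Since $(\mp\mathrm i z)^2=-z^2$, substituting $p=(1+\lambda)/4$ and argument $\mp\mathrm i z$ into \eqref{eqn:620} and comparing with the reformulations above produces \eqref{eqn:804} directly, with the factor $\mp\mathrm i$ arising from the $z$-coefficient in the second term of \eqref{eqn:620}.

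To deduce \eqref{eqn:788}--\eqref{eqn:789}, I would read \eqref{eqn:803} and \eqref{eqn:804} with matching signs (upper with upper, lower with lower) as a $2\times 2$ linear system for $(w_1,w_2)$. Its determinant, via the reflection formula $\Gamma(s)\Gamma(1-s)=\pi/\sin(\pi s)$ applied to the pairs $\{(1-\lambda)/4,(3+\lambda)/4\}$ and $\{(3-\lambda)/4,(1+\lambda)/4\}$ together with the identity $\sin(\pi(3-\lambda)/4)=\cos(\pi(1-\lambda)/4)$, reduces to $\pm\frac{2\mathrm i}{\pi}\mathrm e^{-\mathrm i\pi(1-\lambda)/4}$, which is in particular nonzero. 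Cramer's rule then produces \eqref{eqn:788}--\eqref{eqn:789} after the elementary phase simplification $-\mathrm i\,\mathrm e^{\mathrm i\pi(1-\lambda)/4}=\mathrm e^{-\mathrm i\pi(1+\lambda)/4}$ (which collapses the prefactor to the common form $\sqrt\pi\,\mathrm e^{-\mathrm i\pi(1+\lambda)/4}$ for both sign choices).

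The main obstacle is pure sign and phase bookkeeping: the $\pm z$, the $\mp\mathrm i z$, the interior signs in each brace, and the overall exponential prefactor must all be reconciled simultaneously, with care that the same sign choice is used in \eqref{eqn:803} and \eqref{eqn:804} when assembling the system. No deeper analytic ingredient is needed beyond Kummer's identity, the reflection formula for $\Gamma$, and elementary trigonometric identities at quarter-period angles.
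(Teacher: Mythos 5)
Your proposal is correct and follows essentially the same route as the paper: \eqref{eqn:803} comes straight from the definition \eqref{eqn:620} of $\Theta$ with $p=\frac{1-\lambda}4$, \eqref{eqn:804} from Kummer's identity \eqref{eqn:12} combined with $(\mp\mathrm i z)^2=-z^2$, and \eqref{eqn:788}--\eqref{eqn:789} by inverting the resulting $2\times 2$ system using the reflection-formula identity (the paper's \eqref{eqn:822}), whose verification you carry out exactly as the authors do. The sign, determinant, and phase computations you sketch all check out.
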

\begin{proof}
From \eqref{eqn:620}, and Proposition \ref{pro:58} we have
\begin{equation*}\begin{split}
\Theta(p;\mp \mathrm i z)&=\sqrt \pi\left\{\frac {1}{\Gamma\!\left(\frac 12+p\right)}\,\Phi\!\left(p,\frac 12;-z^2\right)\pm
\frac {2\mathrm i z}{\Gamma\!\left(p\right)}\,\Phi\!\left(\frac 12+p,\frac 32;-z^2\right)\right\}
\\
&=\sqrt \pi\left\{\frac {\mathrm e^{- z^2}}{\Gamma\!\left(\frac 12+p\right)}\,\Phi\!\left(\frac 12-p,\frac 12;z^2\right)\pm
\frac {2\mathrm i\mathrm e^{- z^2}}{\Gamma(p)}\,z\Phi\!\left(1-p,\frac 32;z^2\right)\right\}.
\end{split}\end{equation*}
This identity can be rewritten as
\begin{equation}\label{eqn:786}
\Theta\!\left(\frac 12- p;\mp\mathrm i z\right)
=\sqrt \pi\left\{\frac {\mathrm e^{- z^2}}{\Gamma(1-p)}\,\Phi\!\left(p,\frac 12;z^2\right)\pm
\frac {2\mathrm i z\mathrm e^{- z^2}}{\Gamma\!\left(\frac 12-p\right)}\,\Phi\!\left(\frac 12+p,\frac 32;z^2\right)\right\}.
\end{equation}
Then from \eqref{eqn:620}, and \eqref{eqn:786},  we obtain
\begin{align}\label{eqn:787}
&\frac {1}{\Gamma\left(\frac 12+p\right)}\,\Phi\!\left(p,\frac 12;z^2\right)\mp
\frac {2}{\Gamma\left(p\right)}\,z\Phi\!\left(\frac 12+p,\frac 32;z^2\right)=
\frac {1}{\sqrt \pi}\,\Theta(p; \pm z),
\\
\label{eqn:805}
&\frac {1}{\Gamma(1-p)}\,\Phi\!\left(p,\frac 12;z^2\right)\pm
\frac {2\mathrm i}{\Gamma\left(\frac 12-p\right)}\,z\Phi\!\left(\frac 12+p,\frac 32;z^2\right)=
\frac {\mathrm e^{z^2}}{\sqrt \pi}\,\Theta\left(\frac 12-p;\mp\mathrm i z\right).\end{align}
Letting $p=\frac {1-\lambda}4$ in \eqref{eqn:787} and \eqref{eqn:805}, and using \eqref{eqn:520}, we obtain \eqref{eqn:803}, and \eqref{eqn:804}.

From \cite[(1.2.2)]{Lebedev} we get
\begin{equation}\label{eqn:822}
\frac {1}{\Gamma\!\left(\frac {1-\lambda} 4\right)\Gamma\!\left(\frac {3+\lambda}4\right)}
\pm\frac {\mathrm i}{\Gamma\!\left(\frac {1+\lambda}4\right)\Gamma\!\left(\frac {3-\lambda}4\right)}=
\frac {\mathrm e^{\pm\mathrm i \frac {1+\lambda}4 \pi}}{\pi}.
\end{equation}
Using this identity, we can solve the
system given by \eqref{eqn:803}, and \eqref{eqn:804}, obtaining
\eqref{eqn:788}, and \eqref{eqn:789}.
 \end{proof}

\begin{proposition}\label{pro:59}
Let $0<\epsilon<\frac \pi 4$.  For all $c_1,\,c_2\in\mathbb C$, and $N\in\mathbb Z_+$.
\begin{Mylist}
\item\label{itm:23} If
$\frac {c_1}{\Gamma\left(\frac {1-\lambda}4\right)}\pm \frac {c_2}{2\Gamma\left(\frac {3-\lambda}4\right)}\ne 0$, we have
\begin{align*}
&c_1w_1(z)+ c_2w_2(z)=
\\ &\qquad=\sqrt \pi \mathrm e^{\frac 12 z^2} z^{-\frac {1+\lambda}2}
\left\{\left(\frac {c_1}{\Gamma\!\left(\frac {1-\lambda}4\right)}+ \frac {c_2}{2\Gamma\!\left(\frac {3-\lambda}4\right)}\right)
\sum_{k=0}^N\frac 1{k!}\left(\frac {1+\lambda}4\right)_{\!k}\left(\frac {3+\lambda}4\right)_{\!k} z^{-2k}+
\mathcal O\left(\abs{z}^{-2(N+1)}\right)\right\},
\\&\hspace{8.5cm}\text{\upshape for $\abs{z}\to\infty$, and $\abs{\Arg  z}\le \frac \pi 4-\epsilon$},
\intertext{\upshape and}
&c_1w_1(z)+ c_2w_2(z)=
\\ &\qquad=\sqrt \pi \mathrm e^{\frac 12 z^2}(- z)^{-\frac {1+\lambda}2}
\left\{\left(\frac {c_1}{\Gamma\!\left(\frac {1-\lambda}4\right)}- \frac {c_2}{2\Gamma\!\left(\frac {3-\lambda}4\right)}\right)
\sum_{k=0}^N\frac 1{k!}\left(\frac {1+\lambda}4\right)_{\!k}\left(\frac {3+\lambda}4\right)_{\!k} z^{-2k}+
\mathcal O\left(\abs{z}^{-2(N+1)}\right)\right\},
\\&\hspace{8.5cm}\text{\upshape for $\abs{z}\to\infty$, and $\abs{\Arg (- z)}\le \frac \pi 4-\epsilon$}.
\end{align*}
\item\label{itm:24} If
\begin{equation}\label{eqn:823}
c_1=\frac {c}{\Gamma\!\left(\frac {3-\lambda}4\right)},\qquad
c_2=-\frac {2c}{\Gamma\!\left(\frac {1-\lambda}4\right)},
\end{equation}
with $c\ne 0$, and $\lambda\notin\{1+2n:n\in\mathbb Z_+\}$, we have
\begin{align*}
&c_1w_1(z)+ c_2w_2(z)=
\\ &\qquad=
\frac {c}{\sqrt \pi}\,\mathrm e^{-\frac 12 z^2}  z^{-\frac {1-\lambda}2}
\left\{\sum_{k=0}^N\frac {(-1)^k}{k!}\left(\frac {1-\lambda}4\right)_{\! k}\left(\frac {3-\lambda}4\right)_{\! k} z^{-2k}+
\mathcal O\left(\abs{z}^{-2(N+1)}\right)\right\},
\\&\hspace{8.5cm}\text{\upshape for $\abs{z}\to\infty$, and $\abs{\Arg  z}\le \frac \pi 4-\epsilon$},
\\
&c_1w_1(z)+ c_2w_2(z)=
\\ &\qquad=
\sqrt \pi \mathrm e^{\frac 12 z^2}(- z)^{-\frac {1+\lambda}2}
\left\{\frac {2c}{\Gamma\!\left(\frac {1-\lambda}4\right)\Gamma\!\left(\frac {3-\lambda}4\right)}
\sum_{k=0}^N\frac 1{k!}\left(\frac {1+\lambda}4\right)_{\!k}\left(\frac {3+\lambda}4\right)_{\!k} z^{-2k}+
\mathcal O\left(\abs{z}^{-2(N+1)}\right)\right\},
\\&\hspace{8.5cm}\text{\upshape for $\abs{z}\to\infty$, and $\abs{\Arg (- z)}\le \frac \pi 4-\epsilon$},
\end{align*}
\item\label{itm:36} If
\begin{equation}\label{eqn:824}
c_1=\frac {c}{\Gamma\!\left(\frac {3-\lambda}4\right)},\qquad
c_2=\frac {2c}{\Gamma\!\left(\frac {1-\lambda}4\right)},
\end{equation}
with $c\ne 0$, and
$\lambda\notin\{1+2n:n\in\mathbb Z_+\}$, we have
\begin{align*}
&c_1w_1(z)+ c_2w_2(z)=
\\ &\qquad=\sqrt \pi \mathrm e^{\frac 12 z^2} z^{-\frac {1+\lambda}2}
\left\{\frac {2c}{\Gamma\!\left(\frac {1-\lambda}4\right)\Gamma\!\left(\frac {3-\lambda}4\right)}
\sum_{k=0}^N\frac 1{k!}\left(\frac {1+\lambda}4\right)_{\!k}\left(\frac {3+\lambda}4\right)_{\!k} z^{-2k}+
\mathcal O\left(\abs{z}^{-2(N+1)}\right)\right\},
\\&\hspace{8.5cm}\text{\upshape for $\abs{z}\to\infty$, and $\abs{\Arg  z}\le \frac \pi 4-\epsilon$},
\\
&c_1w_1(z)+ c_2w_2(z)=
\\ &\qquad=\frac {c}{\sqrt \pi}\,\mathrm e^{-\frac 12 z^2} (- z)^{-\frac {1-\lambda}2}
\left\{\sum_{k=0}^N\frac {(-1)^k}{k!}\left(\frac {1-\lambda}4\right)_{\! k}\left(\frac {3-\lambda}4\right)_{\! k} z^{-2k}+
\mathcal O\left(\abs{z}^{-2(N+1)}\right)\right\}.
\\&\hspace{8.5cm}\text{\upshape for $\abs{z}\to\infty$, and $\abs{\Arg (- z)}\le \frac \pi 4-\epsilon$}.
\end{align*}
\item\label{itm:37} If
\begin{equation*}
c_1=\frac {c}{\Gamma\!\left(\frac {3-\lambda}4\right)},\qquad
c_2=\mp\frac {2c}{\Gamma\!\left(\frac {1-\lambda}4\right)},
\end{equation*}
with $c\ne 0$, and $\lambda=1+4n$, with $n\in\mathbb Z_+$,
we have
\begin{equation*}
c_1w_1(z)+ c_2w_2(z)=
\frac {c}{\sqrt\pi}\,\mathrm e^{-\frac 12\,z^2}z^{2n}\sum_{k=0}^n\frac {(-1)^k}{k!}\,(-n)_k\left(\frac 12-n\right)_{\!k} z^{-2k},
\qquad\text{\upshape for all $z$}.
\end{equation*}
\item\label{itm:38} If \begin{equation*}
c_1=\frac {c}{\Gamma\!\left(\frac {3-\lambda}4\right)},\qquad
c_2=\mp\frac {2c}{\Gamma\!\left(\frac {1-\lambda}4\right)},
\end{equation*}
with $c\ne 0$, and $\lambda=3+4n$, with $n\in\mathbb Z_+$,
we have
\begin{equation*}
c_1w_1(z)+ c_2w_2(z)=\pm\frac {c}{\sqrt\pi}
\mathrm e^{-\frac 12\,z^2}z^{2n+1}\sum_{k=0}^n\frac {(-1)^k}{k!}\,(-n)_k\left(-\frac 12-n\right)_{\!k}\,z^{-2k}\qquad\text{\upshape for all $z$}.
\end{equation*}
\end{Mylist}
\end{proposition}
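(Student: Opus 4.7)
The plan is to derive every case from the representation formulas \eqref{eqn:788}–\eqref{eqn:789} of Proposition \ref{pro:57}, combined with the asymptotic expansion \eqref{eqn:611} of $\Theta$ given by Theorem \ref{thm:30}. Substituting \eqref{eqn:788}, \eqref{eqn:789} into $c_1w_1(z)+c_2w_2(z)$ produces, with paired signs,
\[
c_1w_1(z)+c_2w_2(z)=\sqrt\pi\,\mathrm e^{-\mathrm i\frac{1+\lambda}{4}\pi}\Bigl\{K_1\,\mathrm e^{-\frac12 z^2}\,\Theta\bigl(\tfrac{1-\lambda}{4};\pm z\bigr)+K_2\,\mathrm e^{\frac12 z^2}\,\Theta\bigl(\tfrac{1+\lambda}{4};\mp\mathrm i z\bigr)\Bigr\},
\]
with $K_1=\frac{\mathrm i c_1}{\Gamma\!\left(\frac{1+\lambda}{4}\right)}\mp\frac{c_2}{2\Gamma\!\left(\frac{3+\lambda}{4}\right)}$ and $K_2=\frac{c_1}{\Gamma\!\left(\frac{1-\lambda}{4}\right)}\pm\frac{c_2}{2\Gamma\!\left(\frac{3-\lambda}{4}\right)}$. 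The key observation is that on both sectors $|\Arg z|\le\frac\pi4-\epsilon$ and $|\Arg(-z)|\le\frac\pi4-\epsilon$ one has $\Re z^2\ge|z|^2\sin 2\epsilon>0$, so $\mathrm e^{\frac12z^2}$ dominates $\mathrm e^{-\frac12z^2}$ by an exponential factor; moreover, the appropriate choice of sign places the argument $\mp\mathrm i z$ inside the sector $|\Arg(\cdot)|\le\frac\pi2-\epsilon$ in which Theorem \ref{thm:30} provides an expansion for $\Theta$.

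For part \ref{itm:23} I would pick the sign so that $|\Arg(\pm z)|\le\frac\pi4-\epsilon$ and $\mp\mathrm i z$ lies in the validity sector for $\Theta$, then apply Theorem \ref{thm:30} to $\Theta\bigl(\tfrac{1+\lambda}{4};\mp\mathrm i z\bigr)$. A short computation using the branch conventions of \eqref{eqn:448} yields $(\mp\mathrm i z)^{-\frac{1+\lambda}{2}}=\mathrm e^{\pm\mathrm i\frac{1+\lambda}{4}\pi}(\pm z)^{-\frac{1+\lambda}{2}}$, and the prefactor $\mathrm e^{-\mathrm i\frac{1+\lambda}{4}\pi}$ kills the exponential, delivering exactly the leading power in the statement. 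The subdominant piece $\mathrm e^{-z^2/2}\Theta\bigl(\tfrac{1-\lambda}{4};\pm z\bigr)$ is $\mathcal O(\mathrm e^{-c|z|^2}|z|^M)$, hence is absorbed into $\mathrm e^{z^2/2}z^{-\frac{1+\lambda}{2}}\mathcal O(|z|^{-2(N+1)})$. The coefficient $K_2$ is precisely the one announced and is nonzero by hypothesis, closing the case.

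Parts \ref{itm:24} and \ref{itm:36} correspond to the choices of $c_1,c_2$ that realize the two left-hand sides of identity \eqref{eqn:803}: substituting \eqref{eqn:823} or \eqref{eqn:824} gives
\[
c_1w_1(z)+c_2w_2(z)=\frac{c}{\sqrt\pi}\,\mathrm e^{-\frac12z^2}\,\Theta\!\left(\tfrac{1-\lambda}{4};\pm z\right)
\]
as an identity on the whole plane, and Theorem \ref{thm:30} delivers the decaying asymptotic in the sector $|\Arg(\pm z)|\le\frac\pi4-\epsilon$. In the opposite sector one switches to the representation \eqref{eqn:788}, \eqref{eqn:789} with the opposite sign; a direct calculation invoking \eqref{eqn:822} collapses the two Gamma products and shows that the corresponding $K_2$ equals $\frac{2c}{\Gamma(\frac{1-\lambda}{4})\Gamma(\frac{3-\lambda}{4})}$. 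The $\mathrm e^{z^2/2}$-piece now dominates and the analysis proceeds exactly as in part \ref{itm:23}, yielding the growing asymptotic announced.

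Parts \ref{itm:37} and \ref{itm:38} are then immediate: when $\lambda=1+4n$ or $\lambda=3+4n$, the parameter $\tfrac{1-\lambda}{4}$ equals $-n$ or $-\tfrac12-n$, so by the remark following Theorem \ref{thm:30} the function $\Theta\bigl(\tfrac{1-\lambda}{4};\pm z\bigr)$ reduces to the explicit polynomial \eqref{eqn:820} or \eqref{eqn:821}; combined with the exact identity already derived in parts \ref{itm:24}--\ref{itm:36}, one obtains a closed form valid on the entire plane with no remainder. The principal difficulty throughout is not analytical but combinatorial-algebraic: tracking every $\pm$ pair, every power of $\mathrm i$ that arises when $(\mp\mathrm i z)^{-\frac{1+\lambda}{2}}$ is rewritten in terms of $(\pm z)^{-\frac{1+\lambda}{2}}$, and collapsing the Gamma combinations via \eqref{eqn:822}, so that the $K_j$ acquire exactly the form displayed in each subcase.
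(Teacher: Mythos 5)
There is a genuine gap in your treatment of the dominant terms. Your argument rests on applying Theorem \ref{thm:30} to $\Theta\bigl(\tfrac{1+\lambda}{4};\mp\mathrm i z\bigr)$, and you assert that ``the appropriate choice of sign places the argument $\mp\mathrm i z$ inside the sector $\abs{\Arg(\cdot)}\le\frac\pi2-\epsilon$''. This is false on the sectors relevant to Proposition \ref{pro:59}: if $\abs{\Arg(\pm z)}\le\frac\pi4-\epsilon$, then $\Arg(\mp\mathrm i z)=\Arg(\pm z)\mp\frac\pi2$ sweeps an interval of the form $[-\frac{3\pi}4+\epsilon,\,-\frac\pi4-\epsilon]$, so $\abs{\Arg(\mp\mathrm i z)}$ reaches $\frac{3\pi}4-\epsilon$, and already for real positive $\pm z$ it equals $\frac\pi2$ exactly --- outside the region $\abs{\Arg}\le\frac\pi2-\epsilon$ in which Theorem \ref{thm:30} establishes \eqref{eqn:611}. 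No choice of sign repairs this: rotating a sector centred on the real axis by $\pm\frac\pi2$ necessarily leaves the right half-plane. (This is precisely why the decomposition \eqref{eqn:788}--\eqref{eqn:789} is the right tool for Proposition \ref{pro:60}, whose sectors are centred at $\Arg z=\pm\frac\pi4$ so that both $\pm z$ and $\mp\mathrm i z$ stay within $\abs{\Arg}\le\frac\pi2-\epsilon$, but not here.) Consequently the expansion of the dominant $\mathrm e^{\frac12 z^2}$-piece in part \ref{itm:23}, and of the growing branches of parts \ref{itm:24} and \ref{itm:36}, is not justified by your argument. The parts of your proposal that are sound are the super-exponential absorption of the recessive piece $\mathrm e^{-\frac12 z^2}\Theta\bigl(\tfrac{1-\lambda}{4};\pm z\bigr)$, the decaying branches of \ref{itm:24}--\ref{itm:36} via \eqref{eqn:803} and Theorem \ref{thm:30} (where the argument $\pm z$ does lie in the admissible sector), and parts \ref{itm:37}--\ref{itm:38} via \eqref{eqn:820}--\eqref{eqn:821}; these agree with the paper.

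The paper closes exactly this hole by a different route for the dominant behaviour: part \ref{itm:23} is obtained directly from the definition \eqref{eqn:520} of $w_1,w_2$ together with Theorem \ref{thm:31}, since $\abs{\Arg(\pm z)}\le\frac\pi4-\epsilon$ gives $\abs{\Arg(z^2)}\le\frac\pi2-2\epsilon$ and $(z^2)^{p}$ equals $z^{2p}$ or $(-z)^{2p}$ according to the sign, so that $\Phi\bigl(p,q;z^2\bigr)=\mathrm e^{z^2}(z^2)^{p-q}\bigl\{\Gamma(q)/\Gamma(p)+\cdots\bigr\}$ applies directly to both terms of \eqref{eqn:520}. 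The growing branches of \ref{itm:24} and \ref{itm:36} then follow from part \ref{itm:23}, because for those coefficients $\frac{c_1}{\Gamma\left(\frac{1-\lambda}4\right)}\mp\frac{c_2}{2\Gamma\left(\frac{3-\lambda}4\right)}=\frac{2c}{\Gamma\left(\frac{1-\lambda}4\right)\Gamma\left(\frac{3-\lambda}4\right)}$, which is nonzero exactly when $\lambda\notin\{1+2n:n\in\mathbb Z_+\}$. You should substitute this argument for your use of \eqref{eqn:788}--\eqref{eqn:789} wherever the $\mathrm e^{\frac12 z^2}$ term is the one whose expansion is needed.
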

\begin{proof}
\eqref{itm:23}  follows from \eqref{eqn:520}, and Theorem \ref{thm:31}, with $p=\frac {1-\lambda}4$.
Observe that
\begin{equation*}
\abs{\Arg z}\le \frac \pi 4- \epsilon  \implies \abs{\Arg (z^2)}\le \frac \pi 2-2\epsilon,
\end{equation*}
and that
\begin{equation*}
(z^2)^p=\begin{cases} z^{2p},&\text{\upshape if $-\frac \pi 4<\Arg z\le \frac \pi 4$}, \\
(-z)^{2p},&\text{\upshape if $-\frac \pi 4<\Arg (-z)\le \frac \pi 4$}.\end{cases}
\end{equation*}

From \eqref{eqn:823}, \eqref{eqn:824}, and  \eqref{eqn:803} we have
\begin{equation*}c_1w_1(z)+ c_2w_2(z)=
\frac {c}{\Gamma\!\left(\frac {3-\lambda}4\right)} w_1(z)\mp\frac {2c}{\Gamma\!\left(\frac {1-\lambda}4\right)}w_2(z)=
\frac {c}{\sqrt \pi}\,\mathrm e^{-\frac 12 z^2}\Theta\!\left(\frac {1-\lambda}4;\pm z\right).
\end{equation*}
Then  \eqref{itm:24} and  \eqref{itm:36}  follow from  Theorem \ref{thm:30}  with $p=\frac {1-\lambda}4$; while
 \eqref{itm:37} and  \eqref{itm:38}  follow from \eqref{eqn:820}, and \eqref{eqn:821}.
\end{proof}

\begin{proposition}\label{pro:60}
Let $0<\epsilon<\frac \pi 8$.  For all $c_1,\,c_2\in\mathbb C$, and $N\in\mathbb Z_+$.
\begin{Mylist}
\item\label{itm:25}  If $\left(\frac {\mathrm i c_1}{\Gamma\left(\frac {1+\lambda}4\right)}\mp \frac {c_2}{2\Gamma\left(\frac {3+\lambda}4\right)}\right)
\left(\frac {c_1}{\Gamma\left(\frac {1-\lambda}4\right)}\pm \frac {c_2}{2\Gamma\left(\frac {3-\lambda}4\right)}\right)\ne 0$, we have
\begin{align*}
&c_1w_1(z)+ c_2w_2(z)=
\sqrt \pi \mathrm e ^{-\mathrm i \frac {1+\lambda}{4} \pi}\cdot
\\ &\quad\cdot \begin{aligned}[t]
&\left\{\mathrm e^{-\frac 12 z^2} z^{-\frac {1-\lambda}2}
\left[\left(\frac {\mathrm i c_1}{\Gamma\!\left(\frac {1+\lambda}4\right)}- \frac {c_2}{2\Gamma\!\left(\frac {3+\lambda}4\right)}\right)
\sum_{k=0}^N\frac {(-1)^k}{k!}\left(\frac {1-\lambda}4\right)_{\!k}\left(\frac {3-\lambda}4\right)_{\!k} z^{-2k}+
\mathcal O\left(\abs{z}^{-2(N+1)}\right)\right]+\right.
\\&\quad
\left.+\mathrm e^{\frac 12 z^2}(- \mathrm i z)^{-\frac {1+\lambda}2}
\left[\left(\frac {c_1}{\Gamma\!\left(\frac {1-\lambda}4\right)}+ \frac {c_2}{2\Gamma\!\left(\frac {3-\lambda}4\right)}\right)
\sum_{k=0}^N\frac 1{k!}\left(\frac {1+\lambda}4\right)_{\!k}\left(\frac {3+\lambda}4\right)_{\!k} z^{-2k}+
\mathcal O\left(\abs{z}^{-2(N+1)}\right)\right]\right\},
\end{aligned}
\\ &\hspace{8cm}\text{\upshape for $\abs{z}\to\infty$, and $\abs{\Arg (z)-\frac \pi 4}\le \epsilon$},
\\ &c_1w_1(z)+ c_2w_2(z)=
\sqrt \pi \mathrm e ^{-\mathrm i \frac {1+\lambda}{4} \pi}\cdot
\\ &\quad\cdot \begin{aligned}[t]
&\left\{\mathrm e^{-\frac 12 z^2}(- z)^{-\frac {1-\lambda}2}
\left[\left(\frac {\mathrm i c_1}{\Gamma\!\left(\frac {1+\lambda}4\right)}+ \frac {c_2}{2\Gamma\!\left(\frac {3+\lambda}4\right)}\right)
\sum_{k=0}^N\frac {(-1)^k}{k!}\left(\frac {1-\lambda}4\right)_{\!k}\left(\frac {3-\lambda}4\right)_{\!k} z^{-2k}+
\mathcal O\left(\abs{z}^{-2(N+1)}\right)\right]+\right.
\\&\quad
\left.+\mathrm e^{\frac 12 z^2}(\mathrm i z)^{-\frac {1+\lambda}2}
\left[\left(\frac {c_1}{\Gamma\!\left(\frac {1-\lambda}4\right)}- \frac {c_2}{2\Gamma\!\left(\frac {3-\lambda}4\right)}\right)
\sum_{k=0}^N\frac 1{k!}\left(\frac {1+\lambda}4\right)_{\!k}\left(\frac {3+\lambda}4\right)_{\!k} z^{-2k}+
\mathcal O\left(\abs{z}^{-2(N+1)}\right)\right]\right\},
\end{aligned}
\\ &\hspace{8cm}\text{\upshape for $\abs{z}\to\infty$, and $\abs{\Arg(- z)-\frac \pi 4}\le \epsilon$},
\end{align*}
\item\label{itm:26} If
\begin{equation}\label{eqn:825}
c_1=\frac {c}{\Gamma\!\left(\frac {3+\lambda}4\right)},\qquad c_2=\frac {2\mathrm i c}{\Gamma\!\left(\frac {1+\lambda}4\right)},
\end{equation}
with $c\ne 0$, and $\lambda\notin\{-(1+2n):n\in\mathbb Z_+\}$, we have
\begin{align*}
&c_1w_1(z)+ c_2w_2(z)=
\frac {c}{\sqrt \pi}\,
\mathrm e^{\frac 12 z^2}(-\mathrm i z)^{-\frac {1+\lambda}2}
\left\{\sum_{k=0}^N\frac 1{k!}\left(\frac {1+\lambda}4\right)_{\!k}\left(\frac {3+\lambda}4\right)_{\!k} z^{-2k}+
\mathcal O\left(\abs{z}^{-2(N+1)}\right)\right\}.
\\ &\hspace{8cm}\text{\upshape for $\abs{z}\to\infty$, and $\abs{\Arg (z)-\frac \pi 4}\le \epsilon$},
\\ &c_1w_1(z)+ c_2w_2(z)=
\sqrt \pi \mathrm e ^{-\mathrm i \frac {1+\lambda}{4} \pi}\cdot
\\ &\quad\cdot \begin{aligned}[t]
&\left\{\mathrm e^{-\frac 12 z^2}(- z)^{-\frac {1-\lambda}2}
\left[\frac {\mathrm 2 \mathrm i c}{\Gamma\!\left(\frac {1+\lambda}4\right)\Gamma\!\left(\frac {3+\lambda}4\right)}
\sum_{k=0}^N\frac {(-1)^k}{k!}\left(\frac {1-\lambda}4\right)_{\!k}\left(\frac {3-\lambda}4\right)_{\!k} z^{-2k}+
\mathcal O\left(\abs{z}^{-2(N+1)}\right)\right]+\right.
\\&\quad
\left.+\mathrm e^{\frac 12 z^2}(\mathrm i z)^{-\frac {1+\lambda}2}
\left[\frac {c\,\mathrm e^{-\mathrm i \frac {1+\lambda}{4}\pi}}{\pi}
\sum_{k=0}^N\frac 1{k!}\left(\frac {1+\lambda}4\right)_{\!k}\left(\frac {3+\lambda}4\right)_{\!k} z^{-2k}+
\mathcal O\left(\abs{z}^{-2(N+1)}\right)\right]\right\},
\end{aligned}
\\ &\hspace{8cm}\text{\upshape for $\abs{z}\to\infty$, and $\abs{\Arg(- z)-\frac \pi 4}\le \epsilon$}.
\end{align*}
\item\label{itm:39} If
\begin{equation}\label{eqn:826}
c_1=\frac {c}{\Gamma\!\left(\frac {3+\lambda}4\right)},\qquad c_2=-\frac {2\mathrm i c}{\Gamma\!\left(\frac {1+\lambda}4\right)},
\end{equation}
with $c\ne 0$,  and $\lambda\notin\{-(1+2n):n\in\mathbb Z_+\}$,
we have
\begin{align*}
&c_1w_1(z)+ c_2w_2(z)=
\sqrt \pi \mathrm e ^{-\mathrm i \frac {1+\lambda}{4} \pi}\cdot
\\ &\quad\cdot \begin{aligned}[t]
&\left\{\mathrm e^{-\frac 12 z^2} z^{-\frac {1-\lambda}2}
\left[\frac {2\mathrm i c}{\Gamma\!\left(\frac {1+\lambda}4\right)\Gamma\!\left(\frac {3+\lambda}4\right)}
\sum_{k=0}^N\frac {(-1)^k}{k!}\left(\frac {1-\lambda}4\right)_{\!k}\left(\frac {3-\lambda}4\right)_{\!k} z^{-2k}+
\mathcal O\left(\abs{z}^{-2(N+1)}\right)\right]+\right.
\\&\quad
\left.+\mathrm e^{\frac 12 z^2}(- \mathrm i z)^{-\frac {1+\lambda}2}
\left[\frac {c\,\mathrm e^{-\mathrm i\frac {1+\lambda}{4}\pi}}\pi
\sum_{k=0}^N\frac 1{k!}\left(\frac {1+\lambda}4\right)_{\!k}\left(\frac {3+\lambda}4\right)_{\!k} z^{-2k}+
\mathcal O\left(\abs{z}^{-2(N+1)}\right)\right]\right\},
\end{aligned}
\\ &\hspace{8cm}\text{\upshape for $\abs{z}\to\infty$, and $\abs{\Arg (z)-\frac \pi 4}\le \epsilon$},
\\ &c_1w_1(z)+ c_2w_2(z)=
\frac {c}{\sqrt \pi}\,
\mathrm e^{\frac 12 z^2}(\mathrm i z)^{-\frac {1+\lambda}2}
\left\{\sum_{k=0}^N\frac 1{k!}\left(\frac {1+\lambda}4\right)_{\!k}\left(\frac {3+\lambda}4\right)_{\!k} z^{-2k}+
\mathcal O\left(\abs{z}^{-2(N+1)}\right)\right\},
\\ &\hspace{8cm}\text{\upshape for $\abs{z}\to\infty$, and $\abs{\Arg(- z)-\frac \pi 4}\le \epsilon$}.
\end{align*}
\item\label{itm:41}  If
\begin{equation*}
c_1=\frac {c}{\Gamma\!\left(\frac {3+\lambda}4\right)},\qquad c_2=\pm\frac {2\mathrm i c}{\Gamma\!\left(\frac {1+\lambda}4\right)},
\end{equation*}
with $c\ne 0$,  and
$\lambda=-(1+4n)$, with $n\in\mathbb Z_+$,
  we have
\begin{equation*}
c_1w_1(z)+c_2w_2(z)=
\frac {(-1)^nc}{\sqrt \pi}\,\mathrm e^{\frac 12 z^2}
 z^{2n}\sum_{k=0}^n\frac {1}{k!}\,(-n)_k\left(\frac 12-n\right)_{\!k} z^{-2k},\qquad\text{\upshape for  all $z$}.
\end{equation*}
\item\label{itm:42}  If
\begin{equation*}
c_1=\frac {c}{\Gamma\!\left(\frac {3+\lambda}4\right)},\qquad c_2=\pm\frac {2\mathrm i c}{\Gamma\!\left(\frac {1+\lambda}4\right)},
\end{equation*}
with $c\ne 0$,  and
$\lambda=-(3+4n)$, with $n\in\mathbb Z_+$,
  we have
\begin{equation*}
c_1w_1(z)+c_2w_2(z)=
\mp\mathrm i \,\frac {(-1)^n c}{\sqrt \pi}\,\mathrm e^{\frac 12 z^2}
 z^{2n+1}\sum_{k=0}^n\frac {1}{k!}\,(-n)_k\left(-\frac 12-n\right)_{\!k} z^{-2k},\qquad\text{\upshape for  all $z$}.
\end{equation*}
\item\label{itm:27}  If  \begin{equation}\label{eqn:827}
c_1=\frac {c}{\Gamma\!\left(\frac {3-\lambda}4\right)},\qquad
c_2=-\frac {2c}{\Gamma\!\left(\frac {1-\lambda}4\right)},
\end{equation}
with $c\ne 0$, and
$\lambda\notin\{1+2n:n\in\mathbb Z_+\}$,
  we have
\begin{align*}
&c_1w_1(z)+ c_2w_2(z)=
\frac {c}{\sqrt \pi}\,
\mathrm e^{-\frac 12 z^2} z^{-\frac {1-\lambda}2}
\left\{\sum_{k=0}^N\frac {(-1)^k}{k!}\left(\frac {1-\lambda}4\right)_{\!k}\left(\frac {3-\lambda}4\right)_{\!k} z^{-2k}+
\mathcal O\left(\abs{z}^{-2(N+1)}\right)\right\},
\\ &\hspace{8cm}\text{\upshape for $\abs{z}\to\infty$, and $\abs{\Arg (z)-\frac \pi 4}\le \epsilon$},
\\ &c_1w_1(z)+ c_2w_2(z)=
\sqrt \pi \mathrm e ^{-\mathrm i \frac {1+\lambda}{4} \pi}\cdot
\\ &\quad\cdot \begin{aligned}[t]
&\left\{\mathrm e^{-\frac 12 z^2}(- z)^{-\frac {1-\lambda}2}
\left[-\frac {c\, \mathrm e^{-\mathrm i\frac {1+\lambda}{4}\pi}}\pi
\sum_{k=0}^N\frac {(-1)^k}{k!}\left(\frac {1-\lambda}4\right)_{\!k}\left(\frac {3-\lambda}4\right)_{\!k} z^{-2k}+
\mathcal O\left(\abs{z}^{-2(N+1)}\right)\right]+\right.
\\&\quad
\left.+\mathrm e^{\frac 12 z^2}(\mathrm i z)^{-\frac {1+\lambda}2}
\left[\frac {2c}{\Gamma\!\left(\frac {1-\lambda}4\right)\Gamma\!\left(\frac {3-\lambda}4\right)}
\sum_{k=0}^N\frac 1{k!}\left(\frac {1+\lambda}4\right)_{\!k}\left(\frac {3+\lambda}4\right)_{\!k} z^{-2k}+
\mathcal O\left(\abs{z}^{-2(N+1)}\right)\right]\right\},
\end{aligned}
\\ &\hspace{8cm}\text{\upshape for $\abs{z}\to\infty$, and $\abs{\Arg(- z)-\frac \pi 4}\le \epsilon$},
\end{align*}
\item\label{itm:40}  If
\begin{equation}\label{eqn:828}
c_1=\frac {c}{\Gamma\!\left(\frac {3-\lambda}4\right)},\qquad
c_2=\frac {2c}{\Gamma\!\left(\frac {1-\lambda}4\right)},
\end{equation}
with $c\ne 0$, and
$\lambda\notin\{1+2n:n\in\mathbb Z_+\}$,
  we have
\begin{align*}
&c_1w_1(z)+ c_2w_2(z)=
\sqrt \pi \mathrm e ^{-\mathrm i \frac {1+\lambda}{4} \pi}\cdot
\\ &\quad\cdot \begin{aligned}[t]
&\left\{\mathrm e^{-\frac 12 z^2} z^{-\frac {1-\lambda}2}
\left[-\frac {c\,\mathrm e^{-\mathrm i\frac {1+\lambda}{4}\pi}}{\pi}
\sum_{k=0}^N\frac {(-1)^k}{k!}\left(\frac {1-\lambda}4\right)_{\!k}\left(\frac {3-\lambda}4\right)_{\!k} z^{-2k}+
\mathcal O\left(\abs{z}^{-2(N+1)}\right)\right]+\right.
\\&\quad
\left.+\mathrm e^{\frac 12 z^2}(- \mathrm i z)^{-\frac {1+\lambda}2}
\left[\frac {2c}{\Gamma\!\left(\frac {1-\lambda}4\right)\Gamma\!\left(\frac {3-\lambda}4\right)}
\sum_{k=0}^N\frac 1{k!}\left(\frac {1+\lambda}4\right)_{\!k}\left(\frac {3+\lambda}4\right)_{\!k} z^{-2k}+
\mathcal O\left(\abs{z}^{-2(N+1)}\right)\right]\right\},
\end{aligned}
\\ &\hspace{8cm}\text{\upshape for $\abs{z}\to\infty$, and $\abs{\Arg (z)-\frac \pi 4}\le \epsilon$},
\\ &c_1w_1(z)+ c_2w_2(z)=
\frac {c}{\sqrt \pi}\,
\mathrm e^{-\frac 12 z^2}(- z)^{-\frac {1-\lambda}2}
\left\{\sum_{k=0}^N\frac {(-1)^k}{k!}\left(\frac {1-\lambda}4\right)_{\!k}\left(\frac {3-\lambda}4\right)_{\!k} z^{-2k}+
\mathcal O\left(\abs{z}^{-2(N+1)}\right)\right\},
\\ &\hspace{8cm}\text{\upshape for $\abs{z}\to\infty$, and $\abs{\Arg(- z)-\frac \pi 4}\le \epsilon$},
\end{align*}
\item\label{itm:43} If
\begin{equation*}
c_1=\frac {c}{\Gamma\!\left(\frac {3-\lambda}4\right)},\qquad
c_2=\mp\frac {2c}{\Gamma\!\left(\frac {1-\lambda}4\right)},
\end{equation*}
with $c\ne 0$, and
$\lambda=1+4n$, with $n\in\mathbb Z_+$,
we have
\begin{equation*}
c_1w_1(z)+ c_2w_2(z)=\frac {c}{\sqrt\pi}\,\mathrm e^{-\frac 12\,z^2}z^{2n}\sum_{k=0}^n\frac {(-1)^k}{k!}\,(-n)_k\left(\frac 12-n\right)_{\!k} z^{-2k},
\qquad\text{\upshape for all $z$}.
\end{equation*}
\item\label{itm:44} If \begin{equation*}
c_1=\frac {c}{\Gamma\!\left(\frac {3-\lambda}4\right)},\qquad
c_2=\mp\frac {2c}{\Gamma\!\left(\frac {1-\lambda}4\right)},
\end{equation*}
with $c\ne 0$, and
$\lambda=3+4n$, with $n\in\mathbb Z_+$,
we have
\begin{equation*}
c_1w_1(z)+ c_2w_2(z)=\pm\frac {c}{\sqrt\pi}
\mathrm e^{-\frac 12\,z^2}z^{2n+1}\sum_{k=0}^n\frac {(-1)^k}{k!}\,(-n)_k\left(-\frac 12-n\right)_{\!k}\,z^{-2k}\qquad\text{\upshape for all $z$}.
\end{equation*}
\end{Mylist}
\end{proposition}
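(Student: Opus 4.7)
The plan is to reduce every case to a direct application of Theorem \ref{thm:30}, using the two identities \eqref{eqn:788} and \eqref{eqn:789} from Proposition \ref{pro:57} which express $w_1(z)$ and $w_2(z)$ as linear combinations of $\Theta\!\left(\frac{1-\lambda}{4};\pm z\right)$ and $\Theta\!\left(\frac{1+\lambda}{4};\mp\mathrm i z\right)$. The reason Proposition \ref{pro:59} cannot be reused directly is that for $\Arg z$ near $\pm\pi/4$ the two exponentials $\mathrm e^{\pm z^2/2}$ are both oscillatory of the same modulus, so both contribute to the leading asymptotics and a single identity from \eqref{eqn:803} does not suffice.

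First I would fix a sector. If $\abs{\Arg z-\frac\pi4}\le\epsilon<\frac\pi8$, choose the upper signs in \eqref{eqn:788} and \eqref{eqn:789}: then $\Arg(+z)$ is near $\pi/4$ and $\Arg(-\mathrm i z)$ is near $-\pi/4$, so both lie in $\{\abs{\Arg(\cdot)}\le\frac\pi2-\delta\}$ for some $\delta>0$, which is exactly the domain where Theorem \ref{thm:30} supplies an $N$-term expansion for $\Theta\bigl(\cdot;\cdot\bigr)$. Substituting the expressions into $c_1w_1(z)+c_2w_2(z)$ and grouping by $\mathrm e^{-z^2/2}$ and $\mathrm e^{z^2/2}$ gives
\begin{equation*}
c_1 w_1(z)+c_2 w_2(z)=\sqrt\pi\,\mathrm e^{-\mathrm i\frac{1+\lambda}4\pi}\left\{\mathrm e^{-\frac12 z^2} A_+\,\Theta\!\left(\tfrac{1-\lambda}4;z\right)+\mathrm e^{\frac12 z^2} B_+\,\Theta\!\left(\tfrac{1+\lambda}4;-\mathrm i z\right)\right\},
\end{equation*}
with
\begin{equation*}
A_+=\frac{\mathrm i c_1}{\Gamma\!\left(\frac{1+\lambda}4\right)}-\frac{c_2}{2\Gamma\!\left(\frac{3+\lambda}4\right)},\qquad B_+=\frac{c_1}{\Gamma\!\left(\frac{1-\lambda}4\right)}+\frac{c_2}{2\Gamma\!\left(\frac{3-\lambda}4\right)}.
\end{equation*}
Applying Theorem \ref{thm:30} with $p=\frac{1-\lambda}4$ and $p=\frac{1+\lambda}4$ and multiplying by the corresponding powers of $z^{-2p}$ yields case \eqref{itm:25} for $\Arg z$ near $\pi/4$. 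The sector $\abs{\Arg(-z)-\frac\pi4}\le\epsilon$ is handled identically with the lower signs in \eqref{eqn:788}, \eqref{eqn:789}, producing coefficients $A_-$, $B_-$ with the opposite signs in front of $c_2$.

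For the six \textbf{special coefficient} cases \eqref{itm:26}, \eqref{itm:39}, \eqref{itm:27}, \eqref{itm:40}, the choice of $(c_1,c_2)$ in \eqref{eqn:825}, \eqref{eqn:826}, \eqref{eqn:827}, \eqref{eqn:828} is precisely calibrated to make one of $A_\pm$ or $B_\pm$ vanish in one of the two sectors. The needed cancellation follows from the reflection identity \eqref{eqn:822}; for instance, with $c_1=c/\Gamma\!\left(\frac{3+\lambda}4\right)$ and $c_2=2\mathrm i c/\Gamma\!\left(\frac{1+\lambda}4\right)$ the coefficient $A_+$ collapses to zero, while $B_+$ is simplified via \eqref{eqn:822} to $c\,\mathrm e^{-\mathrm i(1+\lambda)\pi/4}/\pi$, giving the single-exponential asymptotic in \eqref{itm:26} for $\Arg z$ near $\pi/4$; in the opposite sector neither coefficient vanishes, so the full two-exponential formula survives. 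The remaining four \textbf{resonant} cases \eqref{itm:41}, \eqref{itm:42}, \eqref{itm:43}, \eqref{itm:44}, where $\lambda\in\pm\{1+2n:n\in\mathbb Z_+\}$, are immediate from the observation in the remark after Theorem \ref{thm:30}: the function $\Theta$ reduces to the explicit polynomials \eqref{eqn:820}, \eqref{eqn:821}, and one of the two $\Theta$ terms in \eqref{eqn:788}, \eqref{eqn:789} is annihilated by a $1/\Gamma$ at a non-positive integer, leaving the closed-form expression stated.

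The sole obstacle is bookkeeping. One must repeatedly verify that the chosen sign in \eqref{eqn:788}, \eqref{eqn:789} keeps both $\Theta$-arguments inside the validity sector of Theorem \ref{thm:30} (which requires $\epsilon<\pi/8$, explaining the stated hypothesis); track two distinct coefficient pairs $(A_+,B_+)$ and $(A_-,B_-)$ for the two sectors so that, in each of the special cases, the vanishing happens on exactly one side; and invoke \eqref{eqn:822} at the right moment to collapse products $\frac1{\Gamma(\frac{1-\lambda}4)\Gamma(\frac{3+\lambda}4)}\pm\frac{\mathrm i}{\Gamma(\frac{1+\lambda}4)\Gamma(\frac{3-\lambda}4)}$ into the compact exponential factors $\mathrm e^{\pm\mathrm i(1+\lambda)\pi/4}/\pi$ appearing in cases \eqref{itm:26}--\eqref{itm:40}. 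No further analytic input is needed beyond Theorem \ref{thm:30} and Proposition \ref{pro:57}.
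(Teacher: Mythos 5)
Your proposal follows essentially the same route as the paper: express $c_1w_1+c_2w_2$ through the $\Theta$-representations of Proposition \ref{pro:57}, apply Theorem \ref{thm:30} to each $\Theta$ in the sector where its argument satisfies $\abs{\Arg(\cdot)}\le\frac\pi2-\delta$, and use \eqref{eqn:822} to compact the coefficients, with the special and resonant cases obtained from the vanishing of one of $A_\pm,B_\pm$ (the paper reads these off directly from the exact identities \eqref{eqn:803}--\eqref{eqn:804} and from \eqref{eqn:820}--\eqref{eqn:821}, which is equivalent to your cancellation argument since the vanishing coefficient multiplies the entire $\Theta$ term, not just its partial sum). One small slip in your illustrative computation for case \eqref{itm:26}: the surviving coefficient is $B_+=c\,\mathrm e^{+\mathrm i\frac{1+\lambda}{4}\pi}/\pi$ (upper sign in \eqref{eqn:822}), not $c\,\mathrm e^{-\mathrm i\frac{1+\lambda}{4}\pi}/\pi$ --- the plus sign is what cancels the prefactor $\sqrt\pi\,\mathrm e^{-\mathrm i\frac{1+\lambda}{4}\pi}$ to yield the stated $c/\sqrt\pi$.
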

\begin{proof}
In the computations we make use of identity \eqref{eqn:822}.

 \eqref{itm:25}  follows from \eqref{eqn:788}, and \eqref{eqn:789}, and  Theorem \ref{thm:30}, with $p=\frac {1\mp\lambda}4$.
Observe that
\begin{equation*}
\abs{\Arg (\pm z)- \frac \pi 4}\le \epsilon \iff
  \abs{\Arg (\mp\mathrm i z)+\frac \pi 4}\le \epsilon,
\end{equation*}
and
\begin{equation*}
\abs{\Arg (\pm z)- \frac \pi 4}\le \epsilon\implies \abs{\Arg (\pm z)}\le \frac \pi 2-\epsilon.
\end{equation*}

From \eqref{eqn:825}, \eqref{eqn:826}, and \eqref{eqn:804} we have
\begin{equation*}
c_1w_1(z)+c_2w_2(z)=c\left\{\frac {w_1(z)}{\Gamma\left(\frac{3+\lambda}4\right)}\pm \frac {2\mathrm i w_2(z)}{\Gamma\left(\frac{1+\lambda}4\right)}\right\}=
\frac {c\mathrm e^{\frac 12 z^2}}{\sqrt \pi}\Theta\left(\frac {1+\lambda}4;\mp \mathrm i  z\right).
\end{equation*}
Then  \eqref{itm:26} and  \eqref{itm:39}  follow from  Theorem \ref{thm:30}  with
$p=\frac {1+\lambda}4$; while
 \eqref{itm:41}, and  \eqref{itm:42}  follow from \eqref{eqn:820}, and \eqref{eqn:821}.

From \eqref{eqn:827}, \eqref{eqn:828}, and \eqref{eqn:803} we have
\begin{equation*}
c_1w_1(z)+c_2w_2(z)=c\left\{\frac {w_1(z)}{\Gamma\left(\frac{3-\lambda}4\right)}\mp \frac {2w_2(z)}{\Gamma\left(\frac{1-\lambda}4\right)}\right\}=
\frac {c\mathrm e^{-\frac 12 z^2}}{\sqrt \pi}\Theta\left(\frac {1-\lambda}4;\pm z\right).
\end{equation*}
Then  \eqref{itm:27} and  \eqref{itm:40}  follow from  Theorem \ref{thm:30}  with
$p=\frac {1-\lambda}4$; while
 \eqref{itm:43}, and  \eqref{itm:44}  follow from \eqref{eqn:820}, and \eqref{eqn:821}.
\end{proof}

\end{document}